\renewcommand{\thesection}{\Roman{section}}
\titleformat{\section}{\Large\bfseries\center\sc}
{Part\ \thesection\,---}{0.3ex}{} 
\titleformat{\subsection}{\large\sc\center}{\thesubsection.}{0.5ex}{}
\titleformat{\subsubsection}[runin]{\bf}{\thesubsubsection.}{0.5ex}{}[.]
\newtheorem{lemma}{Lemma}[subsection]
\newtheorem*{lemma*}{Lemma}
\newtheorem{theorem}[lemma]{Theorem}
\newtheorem*{theorem*}{Theorem}
\newtheorem{cor}[lemma]{Corollary}
\newtheorem{prop}[lemma]{Proposition}
\theoremstyle{definition}
\newtheorem{example}[lemma]{Example}
\newtheorem*{algorithm*}{Algorithm}
\theoremstyle{remark}
\newtheorem{remark}[lemma]{Remark}
\newtheorem{remarks}[lemma]{Remarks}
\newtheorem*{remarks*}{Remarks}
\theoremstyle{definition}
\newtheorem{defn}[lemma]{Definition}
\theoremstyle{plain}
\newtheorem{theorem1}{Theorem}
\newcommand{\pushright}[1]{\ifmeasuring@#1\else\omit\hfill$\displaystyle#1$\fi\ignorespaces}
\newcommand{\pushleft}[1]{\ifmeasuring@#1\else\omit$\displaystyle#1$\hfill\fi\ignorespaces}
\DeclareMathOperator{\Rad}{Rad}
\DeclareMathOperator{\rk}{rk}
\DeclareMathOperator{\id}{id}
\DeclareMathOperator{\Spec}{Spec}
\DeclareMathOperator{\Char}{char}
\DeclareMathOperator{\Res}{Res}
\DeclareMathOperator{\Soc}{Soc}
\DeclareMathOperator{\Ind}{Ind}
\DeclareMathOperator{\Gal}{Gal}
\DeclareMathOperator{\Mat}{Mat}
\DeclareMathOperator{\Ann}{Ann}
\DeclareMathOperator{\Jac}{Jac}
\DeclareMathOperator{\Gr}{Gr}
\newcommand{\R}{\mathrm{R}}
\newcommand{\RR}{\mathscr{R}}
\newcommand{\G}{\mathscr{G}}
\newcommand{\GG}{\mathcal{G}}
\newcommand{\TT}{\mathcal{T}}
\newcommand{\Hom}{\mathrm{Hom}}
\newcommand{\D}{\mathscr{D}}
\newcommand{\SL}{\mathrm{SL}}
\newcommand{\SU}{\mathrm{SU}}
\newcommand{\GL}{\mathrm{GL}} 
\newcommand{\kp}{k_{\mathbf{p}}}
\newcommand{\End}{\mathrm{End}}
\newcommand{\Aut}{\mathrm{Aut}}
\newcommand{\NN}{\mathbb{N}}
\newcommand{\Z}{\mathbb{Z}}
\newcommand{\F}{\mathbb{F}}
\newcommand{\m}{\mathfrak{m}}
\newcommand{\Gm}{\mathbb{G}_m}
\newcommand{\Q}{\mathbb{Q}}
\newcommand{\Ga}{\mathbb{G}_a}
\newcommand{\KK}{\mathcal{K}}
\newcommand{\Cs}{\mathsf{C}}
\newcommand{\ks}{\mathsf{k}}
\newcommand{\Ts}{\mathsf{T}}
\newcommand{\CCC}{\mathcal{C}}
\newcommand{\sep}{\mathrm{sep}}
\newcommand{\C}{\mathscr{C}}
\newcommand{\M}{\mathscr{M}}
\renewcommand{\D}{\mathscr{D}}
\newcommand{\U}{\mathscr{U}}
\newcommand{\bark}{{\overline{k}}}
\newcommand{\barG}{{\overline{G}}}
\title{Geometric rigidity of simple modules for algebraic groups}
\subjclass[2020]{Primary 20G05, 20G15; Secondary 16K20, 13C05}
\author[Bate]{Michael Bate}
\address
{Department of Mathematics,
Ian Wand Building,
University of York,
York YO10 5GH,
UK}
\email{michael.bate@york.ac.uk}
\author[Stewart]{David I. Stewart}
    \address{Department of Mathematics, The University of Manchester, Manchester, UK}
    \email{david.i.stewart@manchester.ac.uk}
\begin{document}

\begin{abstract}
Let $k$ be a field, let $G$ be an affine algebraic $k$-group and $V$ a finite-dimensional $G$-module. 
We say $V$ is \emph{rigid} if the socle series and radical series coincide for the action of $G$ on each indecomposable summand of $V$; say $V$ is \emph{geometrically rigid} (resp.~\emph{absolutely rigid}) if $V$ is rigid after base change of $G$ and $V$ to $\bark$ (resp.~any field extension of $k$). 
We show that all simple $G$-modules are geometrically rigid, though not in general absolutely rigid.
More precisely, we show that if $V$ is a simple $G$-module, then there is a finite purely inseparable extension $k_V/k$ naturally attached to $V$ such that
$V_{k_V}$ is absolutely rigid as a $G_{k_V}$-module. 
The proof turns on an investigation of algebras of the form $K\otimes_k E$ where $K$ and $E$ are field extensions of $k$; 
we give an example of such an algebra which is not rigid as a module over itself. 
We establish the existence of the purely inseparable field extension $k_V/k$ through an analogous version for artinian algebras.

In the second half of the paper we apply recent results on the structure and representation theory of pseudo-reductive groups to give a concrete description of $k_V$ when $G$ is smooth and connected. Namely, we combine the main structure theorem of the Conrad--Prasad classification of pseudo-reductive $G$ together with our previous high weight theory. For a simple $G$-module $V$, we calculate the minimal field of definition of the geometric Jacobson radical of $\End_G(V)$ in terms of the high weight of $V$ and the Conrad--Prasad classification data; this gives a concrete construction of the field $k_V$ as a subextension of the minimal field of definition of the geometric unipotent radical of $G$. 

 We also observe that the Conrad--Prasad classification can be used to hone the dimension formula for $V$ we had previously established; we also use it to give a description of $\End_G(V)$ which includes a dimension formula.
\end{abstract}

\maketitle

\section*{Introduction}

Let $k$ be a field and $G$ an affine algebraic $k$-group.
The recent classification by highest weight of the (rational) simple $G$-modules for smooth connected $G$ in \cite{BS22} has opened the possibility of answering general
questions about the representation theory of algebraic groups, which hitherto might have seemed inaccessible; this paper is presented in that spirit. 
Given a simple $G$-module $V$ we provide rather detailed information about the behaviour of $V$ under field extensions---we describe the structure of $V_{E}$ as a $G_{E}$-module for suitable field extensions $E/k$. 
Principally, we prove that whilst $V$ is far from being absolutely simple in general, or even absolutely semisimple, it is at least \emph{geometrically rigid}. For any finite-dimensional $G$-module $V$, we say 
that $V$ is \emph{rigid} if the socle series and radical series coincide for the action of $G$ on each of its indecomposable summands; we say $V$ is \emph{geometrically rigid} (resp.~\emph{absolutely rigid}) if $V$ is rigid after base change to $\bark$ (resp.~any field extension of $k$). Explicit definitions of the above are to be found in \cref{sec:soc}.

Our main result is
\begin{theorem1}\label{thm:mainone}
Let $G$ be an affine algebraic $k$-group and $V$ a simple $G$-module, and let $k_V/k$ denote the minimal field of definition of the Jacobson radical of $\End_G(V)_{\bark}$.
Then $k_V/k$ is purely inseparable and the $G_{k_V}$-module $V_{k_V}$ is absolutely rigid.
In particular, $V$ is geometrically rigid.
\end{theorem1}

\begin{remarks*}
  \begin{enumerate}\item The notion of rigidity has mostly been investigated in the context of indecomposable modules and there is a choice as to how best to generalise it. The obvious alternative would be to say $V$ is \emph{rigid\,$'$} if its socle and radical series coincide; however, this would rule out the direct sums of rigid$'$ modules being rigid$'$ unless their socle series had the same length. Defined as we have, though, the category of rigid $G$-modules is additive. (Our definition also has the advantage of paving the way for the generalisation to $k$ being an algebra, where we would assert that the fibres $V_K$ should be rigid $G_K$-modules.) 

\item One of the earliest results about rigidity is due to Jennings: over a field, the group algebra of a finite $p$-group is rigid---\!\cite[p93]{Ben98}. 

\item Obviously any simple module $V$ is rigid, but even when $G$ is connected and smooth, $V$ is not in general absolutely semisimple or absolutely indecomposable, so a statement about how rigidity behaves under base change is not immediate---see Remark \ref{rem:notabs} below. 
It turns out that one can find examples $(G,V,L/E/k)$, where $G$ is a smooth connected $k$-group, $V$ is a simple $G$-module, and $L/E/k$ is a tower of finite extensions, such that $V_E$ is \emph{not} rigid but $V_L$ is absolutely rigid. 
This boils down to exhibiting a tensor product of finite purely inseparable field extensions $K$ and $E$ of $k$ such that  the algebra $A:=K\otimes_k E$ is not rigid as a module over itself---see Example \ref{ex:finally}. 

\item A natural source of indecomposable non-simple modules for a split reductive group $G$ over a field $k$ are Weyl modules $V(\lambda)$ and tilting modules $T(\lambda)$, where we refer the reader to \cite[II.2.13,\ II.8.3,\ E.3]{Jan03} for definitions. 
It is natural to ask when these modules are rigid. This question is given a thorough treatment in \cite{And11}, and one finds that for characteristic sufficiently large, both Weyl modules and tilting modules are indeed rigid. Non-rigid examples of both are provided in \emph{op.~cit.} as are more examples, again for $\SL_3$, in \cite{BDM} when $p=3$. See also \cite{Haz17} for a novel approach to this problem.
\end{enumerate}
\end{remarks*}

We divide into two parts. Part \ref{parti} is dedicated to the proof of \cref{thm:mainone}; here we show the result via a rather general argument involving the behaviour of finite-dimensional simple $k$-algebras under base change.
Given such a $k$-algebra $A$, we show that the Jacobson radical $\Jac(A_\bark)$ of the base change of $A$ to $\bark$ has a descent to a minimal field of definition $k'$ where $k'/k$ is purely inseparable; moreover, $A_{k'}$ is absolutely rigid; see \Cref{thm:Drigid}.
This general result implies \cref{thm:mainone} via an argument which shows that in the situation of the theorem it is enough to consider the division ring $\End_G(V)$. (In fact, a further reduction allows us to replace $\End_G(V)$ with its centre, which is some finite field extension of $k$.)
During this part of the paper, we also construct an example of two finite purely inseparable extensions $K$ and $E$ such that the regular module of $K\otimes_k E$ is not rigid and deduce a large class of $G$-modules which are not absolutely rigid.

Part \ref{partii} sharpens the conclusion of \cref{thm:mainone} using the high weight theory of \cite{BS22} for pseudo-reductive groups together with the Conrad--Prasad structure theorem describing their classification, 
\cite[Thm.~9.2.1]{CPClass}. We consider endomorphism rings of simple modules in \cref{sec:endsimple1,sec:endsimple2}, whose deliberations afford a concrete construction of $k_V$.
Essentially this reduces to the case where $G$ is a pseudo-split pseudo-reductive group with no non-trivial normal unipotent $k$-subgroup scheme, hence locally of minimal type, hence described by the Conrad--Prasad structure theorem. (Recall that $G$ is \emph{pseudo-split} if it has a split maximal torus.) Then by \cite{BS22}, a simple module $V$ is isomorphic to $L_G(\lambda)$ and the field $k_V$ we use actually coincides with $\End_G(V)$, which also identifies with the high-weight space $L_G(\lambda)_\lambda$; it can be precisely described as a compositum of purely inseparable field extensions using arithmetic information about $\lambda$ together with the Conrad--Prasad data defining $G$. Mostly, the root system of $G$ has no bearing on $k_V$, while evidently it does on $L_G(\lambda)$. 
In case $G$ is an arbitrary smooth connected affine algebraic $k$-group, we elucidate the structure of the division algebra $D:=\End_G(V)$. We show that $D$ has a unique \emph{$p$-splitting field}: there is a unique minimal extension $E/k$ such that $D\otimes_k E$ is a product of matrix algebras over purely inseparable extensions of $E$. 
Thus engaged, we interpret our previous dimension formula in terms of the same data, and give a formula for the dimension of $D$: \cref{cor:dimforsimples}.

A useful auxiliary result locates the simple modules for pseudo-split pseudo-reductive groups as submodules of simple modules for Weil restrictions of reductive groups. More specifically, for a pseudo-reductive group $G$, there is a homomorphism $i_G:G\to \R_{k'/k}(G')$, where $k'$ is the minimal field of definition for the geometric unipotent radical of $G$; the group $G'$ is the corresponding reductive quotient of $G_{k'}$; and $\R_{k'/k}$ denotes the Weil restriction functor. We show that when $G'$ is pseudo-split, the simple modules for $\R_{k'/k}(G')$ are semisimple and isotypic upon restriction to the image of $G$; see \cref{prop:splitend}.

\section[{Proof of \texorpdfstring{\cref{thm:mainone}}{Theorem 1}}]{Proof of \texorpdfstring{\cref{thm:mainone}}{Theorem 1}}\label{parti}
\subsection{Preliminaries}\label{sec:prelims}

Our main references for the theory of algebraic groups are \cite{CGP15}, \cite{Milne17}, and \cite{Jan03}, with the last also our standard reference for
the representation theory of algebraic groups. 
In most of the paper, $k$ denotes a field, but below we do need to consider the base change of $k$-groups and modules to more general $k$-algebras, so up until \cref{cor:DEZE} we also let $k$ denote a general commutative unital ring. 
For such a $k$, we view an affine $k$-group $G$ as a functor $\underline{k\textrm{-Alg}}\to \underline{\mathrm{Grp}}$
which is represented by a $k$-algebra $k[G]$; in other words $G(?)\cong \Hom_{k\textrm{-Alg}}(k[G],?)$. 
If $k[G]$ 
is isomorphic to $k[T_1,\dots,T_n]/I$ for $I$ finitely generated, then we say $G$ is finitely presented or \emph{algebraic} 

\begin{center}\framebox{In what follows $G$ will always denote an affine algebraic $k$-group scheme.}\end{center}

In particular, suppose that $k$ is a field, $k_s$ its separable closure, and $\bark$ its algebraic closure.
If $G$ is smooth, then $G$ is geometrically reduced, and it follows from \cite[Cor. 1.17]{Milne17}
that $G_{k_s}(k_s)$ is dense in $G_{k_s}$.

\subsubsection{Modules for algebraic groups}\label{sec:Gmods} 
Recall that $k$ is a commutative unital ring.
Let $M$ be a $k$-module (possibly not finitely generated). 
Then we may define a group functor $M_a:\underline{k\textrm{-Alg}}\to \underline{\mathrm{Grp}}$ so that 
$M_a(A)=M\otimes_k A$ inherits a group structure from the additive group on $A$. 
Note that, even when $k$ is a field, $M_a$ is only an algebraic group when $M$ is finite-dimensional. 
Recall that a left action of $G$ on a $k$-functor $X$ is a morphism (i.e. a natural transformation) 
$\phi:G\times X\to X$ such that $\phi(A):G(A)\times X(A)\to X(A)$ is a left action of the group $G(A)$ on $X(A)$ for each $k$-algebra $A$. 
In case $G$ acts on $M_a$ such that the action of $G(A)$ on $M_a(A)$ is $A$-linear for each $k$-algebra $A$, we say $M$ is a \emph{representation for $G$}, or more frequently in this paper, a \emph{$G$-module}.
These definitions allow us to work with arbitrary $k$-modules, 
although in the case $M\cong k^n$ is a $G$-module 
then it corresponds to a homomorphism $G\to \GL(M)$ of algebraic groups.  A $G$-module $M$ is equivalently a comodule for the Hopf algebra $k[G]$ and we denote the comodule map $\Delta_M:M\to M\otimes k[G]$. 
For example when $M$ is a finitely generated free $k$-algebra with basis $\{e_1,\dots, e_n\}$, then the natural representation of $\GL(M)$ on $M$ corresponds to the comodule map determined on $e_i$ by $\Delta_M(e_i)=\sum_{1\leq j\leq n}(e_j\otimes T_{ji})$ where $k[\GL(M)]=k\left[T_{ij},\frac{1}{\det} \mid 1\leq i,j\leq n\right]$. A $G$-submodule is a $k$-submodule $N\subseteq M$ 
such that $N_a(A)$ is $G(A)$-stable for all $k$-algebras $A$. If $G$ is flat---which is to say that $k[G]$ is a flat $k$-module---then  $N$ is a $G$-submodule if and only if $\Delta_M(N)\subseteq N\otimes k[G]$. If $k\to E$ is a homomorphism of rings and $M$ is a $G$-module then $M_{E}:=M\otimes_k E$ acquires an action of the base change $G_{E}$ of $G$ making it into a 
$G_{E}$-module. 
For more on these definitions, see \cite[\S I.2]{Jan03}.

Morphisms between $G$-modules are $G$-equivariant $k$-linear maps. 
If $M$ and $N$ are $G$-modules, the full collection of such morphisms is denoted $\Hom_{G}(M,N)$. If $M=N$, we write $\End_G(M)$ instead. If $G$ is flat then the category of $G$-modules is abelian; i.e. kernels and cokernels are submodules \cite[I.2.9]{Jan03}. Therefore, we have the following, with the usual proof.
\begin{lemma}[Schur] Suppose that $G$ is flat and let $M$ be a simple $G$-module. Then $\End_G(M)$ is a division ring.\label{lem:schur}\end{lemma}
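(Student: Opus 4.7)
The plan is to mimic the classical proof of Schur's lemma, transplanted into the abelian category of $G$-modules. I would take an arbitrary nonzero endomorphism $\phi \in \End_G(M)$ and aim to exhibit an inverse $\phi^{-1}$ inside $\End_G(M)$.

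The key input is that $G$ is flat: by the result just cited from \cite[I.2.9]{Jan03}, the category of $G$-modules is then abelian, so $\ker\phi$ and $\im\phi$ exist as honest $G$-submodules of $M$. Since $\phi$ is nonzero, $\ker\phi \neq M$ and $\im\phi \neq 0$; simplicity of $M$ therefore forces $\ker\phi = 0$ and $\im\phi = M$, so $\phi$ is a $k$-linear bijection on $M$.

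The only potentially delicate point is to check that the set-theoretic inverse is itself $G$-equivariant, so that it really lies in $\End_G(M)$. I would handle this either directly---for each $k$-algebra $A$, the base-change $\phi_A\colon M\otimes_k A \to M\otimes_k A$ is an $A$-linear bijection intertwining the $G(A)$-action, and applying $\phi_A^{-1}$ to both sides of the intertwining identity shows $\phi_A^{-1}$ commutes with $G(A)$---or abstractly, by appealing to the fact that a morphism in an abelian category with trivial kernel and cokernel is automatically an isomorphism. Either way, $\phi^{-1} \in \End_G(M)$, which yields the division ring conclusion. I do not anticipate any real obstacle: the substantive content has been absorbed into the flatness hypothesis and the prior statement that the category of $G$-modules is abelian.
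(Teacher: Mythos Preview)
Your proposal is correct and is precisely ``the usual proof'' the paper alludes to: the flatness hypothesis makes the category of $G$-modules abelian, so kernels and images of endomorphisms are submodules, and then the standard Schur argument goes through. The paper offers no further detail, so there is nothing to compare.
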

Recall a $G$-module $M$ is \emph{locally finite} if any element $m\in M$ is contained in a $G$-submodule of $M$ which is finitely generated as a $k$-module. 
It has been noticed by Wilberd van der Kallen \cite[\S1.7]{vdk21} that the proof in \cite[I.2.13]{Jan03} that $G$-modules are locally finite for arbitrary $k$ and flat $G$ is incomplete. It relies on the assumption that an arbitrary intersection of $G$-submodules is again a submodule; however, there is a counterexample to be found at \cite[Exp.~VI, Remarque 11.10.1]{SGA3}. 
We are grateful to Ofer Gabber for providing the following example which shows that local finiteness can indeed fail for arbitrary flat $G$.

\begin{example}
Let $k$ be a rank one valuation ring and assume $k$ is not a DVR. Any such $k$ is not noetherian and it arises as the subring $\{x\in \mathrm{Frac}(k)\mid v(r)\geq 0\}$ for some valuation $v:\mathrm{Frac}(k)\setminus\{0\}\to \mathbb{R}$ with dense image.
(For a concrete example,  one could take $k$ to be the valuation ring in the field $F(X^{1/2^n}\mid n\in \mathbb{N})$ with valuation induced by the degree function, where $F$ is any field.) 
We have that $k$ is local with unique maximal ideal $\m=\{x\in k\mid v(x)>0\}$, which therefore has $\m^2=\m$. 

Let $k[\Gm]=k[T,T^{-1}]$ be the coordinate ring of $\Gm$. This is a Hopf algebra with $\Delta(T)=T\otimes T$, $S(T)=T^{-1}$, $\epsilon(T)=1$. Consider the subring $R\subset k[T,T^{-1}]$ of elements for which the coefficients of powers $T^n$ for nonzero $n$ lie in $\m$. 
This is easily seen to be a sub-Hopf algebra of $k[\Gm]$, so defines an affine $k$-group $G$ and an associated dominant morphism $\Gm\to G$. Indeed, $G$ is flat---which is to say that $R$ is a flat $k$-module; this follows since $k$ is a valuation ring and both $\m$ and $k$ are torsion-free $k$-modules \cite[\href{https://stacks.math.columbia.edu/tag/0549}{Tag 0549}]{stacks-project}.
Let $V:=k\cdot e$ be the standard representation of $\Gm$, whose comodule map is $\Delta_V:V\to V\otimes k[T,T^{-1}];\ e\mapsto e\otimes T$. 
Then $M:=\m\cdot e$ is a $G$-submodule using the characterisation of \cite[I.2.9(1)]{Jan03}: for we have $$\Delta_V(M)=\m e\otimes T=\m^2e\otimes T=\m e\otimes \m T \subseteq M\otimes R.$$ 
By the same token, if $N$ is a finitely generated $G$-submodule of $M$, then we can show that $N=0$.
For suppose we have such an $N$ which is nonzero. 
Then $N$ is of the form $\mathfrak{n}\cdot e$ for some finitely generated ideal $\mathfrak{n}=(n_1,\dots,n_r)$ of $\m$. 
Further, in fact $\mathfrak{n}=(n)$ is principal, where $0\neq n$ is any element of $\mathfrak{n}$ with $v(n)$ minimal. But since $\Delta_V(n\cdot e)=ne\otimes T\in N\otimes R$, we have $ne\otimes T=n'e\otimes mT$ for $m\in\m$ with necessarily $v(m)>0$, which implies that $v(n')<v(n)$, contradicting the minimality of $v(n)$. Hence the only $G$-submodules of $M$ are not finitely generated. Thus $M$ is not locally finite. 
\end{example}

\begin{remark}
  We are grateful to the referee for the observation that the above is also an example of a non-finitely generated flat algebra whose fibres have different dimensions.
\end{remark}

Note also that local finiteness is used to infer finite generation over $k$ of a simple $G$-module, 
which means that the proof of \cite[I.10.15]{Jan03} is also incomplete.
However, if $G$ is flat and $k$ is noetherian then the proof in \emph{loc.~cit.} goes through; that is, under these hypotheses all $G$-modules are locally finite and, in particular, simple modules are finitely generated over $k$.

For a flat $k$-algebra $E$ (e.g., an extension of fields $E/k$) and $G$-modules $M$ and $N$ such that $M$ is finitely generated and projective as a $k$-module, we have \cite[I.2.10(7)]{Jan03}
\begin{equation}\label{eq:tensorhom}
\Hom_G(M,N)\otimes_k E \cong \Hom_{G_{E}}(M_{E},N_{E}). 
\end{equation}

\subsubsection{Socle, Radical, Rigidity}\label{sec:soc}
Let $R$ be a ring (not necessarily commutative) and $M$ an $R$-module of finite length; i.e.~a module with finitely many composition factors. Recall that the \emph{socle} $\Soc_R(M)$ of $M$ is defined to be the sum of its simple submodules. The \emph{socle series} (or \emph{Loewy series}) of $M$ is defined recursively by setting $\Soc^1M = \Soc_RM$
and letting $\Soc^iM$ be the submodule such that $\Soc^iM/\Soc^{i-1}M = \Soc_R(M/\Soc^{i-1}M)$. Dually, the \emph{radical} $\Rad_RM$ is the intersection of the maximal proper submodules of $M$. The radical series is defined recursively by letting $\Rad^1M = \Rad_RM$ and then $\Rad^iM:=\Rad_R(\Rad^{i-1}M)$. By convention, we set $\Rad^0(M) = M$. Taking $R$ as a module over itself, we have obviously $\Jac(R)=\Rad(R)$.

We say the socle series (resp.~radical series) has length $\ell:=\ell(M)$ if $\ell\in\mathbb N$ is minimal such that $\Soc^\ell(M)=M$ (resp.~$\Rad^\ell(M)=0$). 
They must have a common length $\ell$ called the \emph{Loewy length} and we always have an inclusion $\Rad^{\ell-i}(M) \subseteq \Soc^i(M)$ \cite[\S9.4]{ANT}.
 
If $M$ is indecomposable with Loewy length $\ell$---for example when it is the regular module for a local ring---then we say $M$ is \emph{rigid} if the radical and socle series coincide, that is $$
\Soc^iM = \Rad^{\ell-i}M \textrm{ for each } 0\leq i \leq \ell.$$ We say an $R$-module $M$ is \emph{rigid} if its indecomposable summands are so. 
If $R$ is a $k$-algebra for a field $k$, then $M$ is called \emph{geometrically rigid} if $M_{\bark}$ is a rigid $R_{\bark}$-module. It is called \emph{absolutely rigid} if $M_{E}$ is a rigid $R_E$ module for any field extension $E$ of $k$.

Following \cite[I.2.14, II.D.1]{Jan03}, we can replace $R$ with $G$ in all the above, where $G$ is an affine algebraic $k$-group and $k$ is a field. Then we get obvious notions of radical and socle series of finite-dimensional $G$-modules, and the idea of when one is rigid.

The following collects some basic observations about rigidity, whose proofs are obvious:

\begin{lemma}\label{sumrigid}
Suppose that $M$ is a finite-length $R$-module or $G$-module, and $M = U_1\oplus \cdots \oplus U_r$ is a decomposition of $M$ as a direct sum of submodules.
\begin{itemize}
\item[(i)] $\Soc^j(M) = \bigoplus_{i=1}^r \Soc^j(U_i) \textrm{  and } \Rad^j(M) = \bigoplus_{i=1}^r \Rad^j(U_i).$
\item[(ii)] The socle and radical series for $M$ coincide if and only if they coincide for each summand \emph{and} the summands all have a common Loewy length.
\item[(iii)] If the socle and radical series for $M$ coincide, then $M$ is rigid.
\end{itemize}
\end{lemma}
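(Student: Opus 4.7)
The plan is to prove the three items in sequence, with (i) doing the real work and (ii), (iii) falling out by unwinding definitions. Throughout I would treat the $R$-module and $G$-module cases uniformly, since the category of $G$-modules is abelian under our standing hypotheses and the direct-sum compatibilities carry over verbatim.

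For (i), I would induct on $j$. The base case $j = 1$ records the standard compatibility of $\Soc$ and $\Rad$ with direct sums: any simple submodule of $\bigoplus_i U_i$ projects injectively onto some single $U_i$ (each projection is either zero or injective by simplicity), giving $\Soc M = \bigoplus \Soc U_i$; and $\Rad M = \bigoplus \Rad U_i$ follows because the quotient $M/\bigoplus \Rad U_i \cong \bigoplus U_i/\Rad U_i$ is semisimple, while conversely each $\Rad U_i$ lies in $\Rad M$ (obtained by intersecting over the maximal submodules of $M$ of the form $\sum_{j \neq i} U_j + N_i$ as $N_i$ runs over maximal submodules of $U_i$). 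The inductive step for the radical series is immediate from $\Rad^j = \Rad \circ \Rad^{j-1}$; for the socle series, I would combine the inductive hypothesis $M/\Soc^{j-1} M \cong \bigoplus U_i/\Soc^{j-1} U_i$ with the base case applied to this decomposition and lift back through the quotient map to obtain $\Soc^j M = \bigoplus \Soc^j U_i$.

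For (ii), let $\ell$ denote the Loewy length of $M$; by (i) one has $\ell = \max_i \ell(U_i)$, since both series of $M$ stabilise precisely when those of every summand do. Assume first that the socle and radical series of $M$ coincide. Projecting the identity $\Soc^1 M = \Rad^{\ell-1} M$ onto a summand $U_{i_0}$ via (i) yields $\Soc U_{i_0} = \Rad^{\ell-1} U_{i_0}$; if $\ell(U_{i_0}) < \ell$ the right-hand side is zero while the left is not, a contradiction. So every summand has Loewy length $\ell$, and matching direct-sum components in (i) then gives summand-wise equalities $\Soc^j U_i = \Rad^{\ell-j} U_i$ for all $i, j$. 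The converse is immediate on substituting these equalities into the formulae of (i).

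For (iii), I would refine the decomposition of $M$ into indecomposable summands (which exists in this finite-length setting by Krull--Schmidt) and apply (ii) to this refinement. If the socle and radical series of $M$ coincide, (ii) says they coincide on each indecomposable summand, which is precisely what it means for $M$ to be rigid. The main obstacle will be the only-if direction of (ii)---one has both to rule out unequal Loewy lengths among the summands and to extract summand-by-summand equality from an equality of direct sums---but both steps emerge cleanly from restricting the global identities of (i) to a single chosen summand.
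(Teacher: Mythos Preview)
Your proof is correct and complete; the paper itself does not supply a proof, simply stating that these are ``basic observations about rigidity, whose proofs are obvious.'' Your argument spells out exactly the routine verifications one would expect (direct-sum compatibility of $\Soc$ and $\Rad$, the Loewy-length comparison via projecting onto a summand, and the passage to an indecomposable refinement for (iii)), so there is nothing to compare.
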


\subsubsection{On artinian algebras}\label{sec:artin} We need a little non-commutative algebra and our sources are \cite{Lam1} and \cite{Lam2}.
Let $k$ be a commutative artinian ring and $A$ a (possibly non-commutative) $k$-algebra which is finitely generated as a $k$-module (and hence artinian).
Let $\Jac(A)$ denote the Jacobson radical of $A$: the intersection of all maximal left ideals. It can be shown that $\Jac(A)$ is the annihilator of all simple left $R$-modules, from which it follows it is  a two-sided ideal of $A$---\hspace{1sp}\cite[\S4]{Lam1}.
Since $A$ is artinian, $\m_A:=\Jac(A)$ is the maximal nilpotent ideal of $A$ and the quotient $A/\m_A$ is the maximal semisimple quotient of $A$.
Further, an $A$-module $V$ is semisimple if and only if it is annihilated by $\m_A$---\hspace{1sp}\cite[Ex.~4.18]{Lam1}. 
So we see in this case that the terms of the socle series in a module $V$ are the annihilators of the powers of $\m_A$; \emph{viz.~}
\[\Soc^i(V)=\{v\in V\mid (\m_A)^i\cdot v=0\};\quad \Soc^i(A)=\Ann_A((\m_A)^i) ,\]
where we consider $A$ as a left regular $A$-module. If in addition $A$ is indecomposable \cite[\S22]{Lam1} and $n$ is the Loewy length of $A$, then we have
\begin{equation}\text{$A$ is rigid if and only if $(\m_A)^{n-i} = \Ann_A((\m_A)^i)$ for each $i$.}\label{rigidann}\end{equation}
If $A$ is not indecomposable, then $A$ is rigid if and only if the equality in \eqref{rigidann} holds for each of the blocks (indecomposable summands) of $A$.
Note that, as in \Cref{sumrigid}, for a general artinian algebra $A$, if the equality in \eqref{rigidann} holds, then $A$ is rigid in such a situation;
the converse is not true in general (e.g., see Example \ref{rem:notbottom}).

In the commutative case, a property related to rigidity is that of being Gorenstein.
Recall that a zero-dimensional commutative local noetherian ring $A$ is Gorenstein if one of the following equivalent conditions holds \cite[Prop. 21.5]{Eisenbud}: $A$ has a simple socle as a left $A$-module; $A$ is self-injective. 
Or if $A$ is a $k$-algebra then equivalently $A$ is self-dual over $A$, that is, $A\cong \Hom_k(A,k)$.

\begin{example}\label{rem:rigidegs}
We describe a non-rigid commutative artinian Gorenstein algebra. Let $k$ be any field and consider $A:=k[x,y]/(xy,x^3-y^2)$ as a module over itself. Then there is a unique maximal ideal $\m=\Jac(A)=(x,y)$.
So we see that $\Rad^i(A)$ coincides with  $\m^i$, and the nonzero ones are $\m$, $\m^2=(x^2)$, and $\m^3=(x^3)$. 
Meanwhile, $\Soc^i(A)$ coincides with the annihilators $\Ann_A(\m^i)$; we have $\Soc(A)=(x^3)=\m^3$ and $\Soc^3(A)=(x,y) = \m$, but $\Soc^2(A)=(x^2,y)\neq \m^2$. 
So even though $A$ is Gorenstein (since, for example, it has a simple socle), $A$ is not rigid. 

Pictorially, the relationship of the socle and radical series is offered in  \cref{nonrigpic}, where the lines show successive powers of $x$ and $y$ and $\Soc^i(A)$ and $\Rad^i(A)$ are found by taking the ideal generated by all elements occuring at that level or below. 

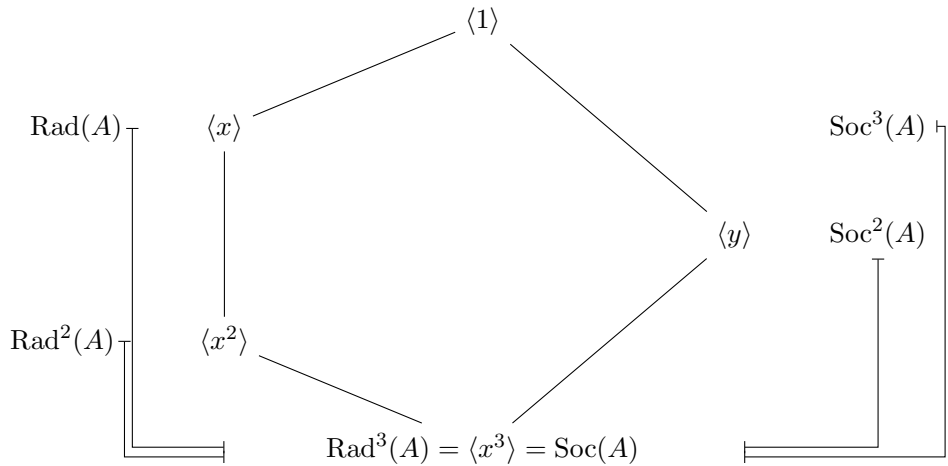
\begin{figure}\begin{center}\begin{tikzpicture}[scale=0.95,every node/.style={scale=0.95}]
  \matrix (m)
    [
      matrix of math nodes,
      nodes in empty cells,
      row sep    = 2em,
      column sep = 2em
    ]
    {
       & &  (  1 )  & \\
      \hspace{1em}\Rad(A) &  (  x )  & &  & \Soc^3(A)&[-2em]\\
       & & &  (  y  )  & \Soc^2(A)\\
      \Rad^2(A)&  (  x^2 )  & &  \\
        &\node(c){};& \Rad^3(A)= (  x^3 ) =\Soc(A) & & & \node(a) { }; \\
    };
    \draw (m-2-2) -- (m-1-3);
    \draw (m-2-2) -- (m-4-2);
    \draw (m-4-2) -- (m-5-3);
    \draw (m-1-3) -- (m-3-4);
    \draw (m-3-4) -- (m-5-3);
    \draw[{Bar}-{Bar}] (m-2-5) -| (a.center) |- (m-5-4);
    \draw[{Bar}-{Bar}] (m-3-5.south) |- (m-5-4.north east);
    \draw[{Bar}-{Bar}] (m-4-1.east) |- (c.center);
    \draw[{Bar}-{Bar}] (m-2-1.east) |- (c.north);
  
\end{tikzpicture}\end{center}\caption{A non-rigid algebra}\label{nonrigpic}\end{figure}
\end{example}

It was shown in \cite[\S70]{Mac} that one may characterise rigidity of a commutative Gorenstein  algebra of finite dimension over a field $k$ using its Hilbert function, so we explain this now. Take any filtration $\{M_i:=\mathscr{F}_i(M)\}$, i.e.~$M=M_0\supset M_1 \supset M_2 \supset \dots$, where the $M_i$ are all submodules of $M$. Then we may define the associated Hilbert function 
\[H_{\mathscr{F}}(M)(x):=H_{\mathscr{F},0}+H_{\mathscr{F},1}x+H_{\mathscr{F},1}x^2+\dots\] where $H_{\mathscr{F},i}=\dim_k(M_i/M_{i+1})$. 

Let $M=A$ and $\mathscr{F}$ the radical filtration $A_i:=(\m_A)^i$. 
Then Macaulay showed that the socle and radical series for $A$ coincide if and only if the coefficients of $H_{\mathscr{F}}$ are symmetric about the middle. 
Rather similarly, the socle and radical series for $A$ coincide if and only if there is an isomorphism $A\cong\Gr_\m(A)$, where $\Gr_\m(A)$ denotes the associated graded algebra of $A$ arising from the radical filtration. There is a detailed study of the Hilbert functions that can arise in \cite{Iar94}.

\subsubsection{Base change}
Let $k$ be a field and $G$ an affine algebraic $k$-group; let $A$ be a finite-dimensional $k$-algebra. We record two results about the behaviour of $G$-modules and $A$-modules under base change. 
Let $V$ be a finite-dimensional $A$-module (so $V$ is finite-dimensional with respect to the $k$-vector space structure inherited from $A$). 
Then obviously for any field extension $E/k$, we have $V_E:=V\otimes_kE$ is an $A_E:=A\otimes_k E$-module in a natural way. If $W$ is another $A$-module then, analogously with \cref{eq:tensorhom}, the finite-dimensionality of $V$ and the flatness of $E/k$ implies \[\Hom_A(V,W)_E\cong\Hom_{A_E}(V_E,W_E).\] Now we may state 

\begin{lemma}\label{lem:semisimpledown}
Suppose that $E/k$ is an extension of fields  and $V$ is a finite-dimensional $G$-module (resp. $A$-module).
If $V_{E}$ is semisimple as a $G_{E}$-module (resp.~ $A_E$-module), then $V$ is semisimple as a $G$-module (resp.~$A$-module).
\end{lemma}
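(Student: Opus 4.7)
The plan is to handle the $G$-module and $A$-module cases in parallel, since the only inputs we need are the base change formula for Hom spaces (equation \eqref{eq:tensorhom} for $G$-modules, and its analogue for $A$-modules stated just above the lemma) and faithful flatness of a field extension. I will use the characterization that a finite-dimensional module $V$ is semisimple if and only if every short exact sequence $0\to W\to V\to V/W\to 0$ of submodules splits.

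More precisely, fix a $G$-submodule (resp.\ $A$-submodule) $W\subseteq V$ with quotient map $\pi\colon V\to V/W$. Consider the $k$-linear map
\[
\phi\colon \Hom_G(V/W,V)\longrightarrow \Hom_G(V/W,V/W),\qquad f\mapsto \pi\circ f,
\]
and set $C:=\operatorname{coker}(\phi)$. Our sequence splits over $k$ if and only if $\operatorname{id}_{V/W}$ lies in the image of $\phi$, equivalently its class in $C$ is zero. Base changing to $E$, flatness of $E/k$ together with the Hom base change formula identifies $\phi\otimes_k E$ with the analogous map
\[
\phi_E\colon\Hom_{G_E}((V/W)_E,V_E)\longrightarrow\Hom_{G_E}((V/W)_E,(V/W)_E),
\]
and identifies $\operatorname{coker}(\phi_E)$ with $C\otimes_k E$. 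Since $V_E$ is semisimple over $G_E$ (resp.\ $A_E$), the base-changed sequence splits, so $\operatorname{id}_{(V/W)_E}$ lies in the image of $\phi_E$, i.e.\ $\operatorname{id}\otimes 1 = 0$ in $C\otimes_k E$. Because $E$ is a nonzero vector space over the field $k$, the functor $-\otimes_k E$ is faithful, so $\operatorname{id}=0$ in $C$, producing the desired splitting over $k$.

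Doing this for every submodule $W$ of $V$ yields semisimplicity of $V$. The argument is identical in the $A$-module case, using the base change isomorphism $\Hom_A(V,W)\otimes_k E\cong \Hom_{A_E}(V_E,W_E)$ recorded before the lemma.

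There is essentially no obstacle: the only slightly delicate point is verifying that the Hom base change formula applies in the situation at hand, but for a finite-dimensional $V$ this is immediate from the cited references, and faithful flatness of a field extension is automatic since every $k$-module is free. An equally valid alternative, which one could phrase in a single line for the $A$-module case, is to observe that $\Jac(A)\cdot V\neq 0$ implies $\Jac(A_E)\cdot V_E\supseteq (\Jac(A)\cdot V)\otimes_k E\neq 0$ by faithful flatness, since $\Jac(A)\otimes_k E$ lies in $\Jac(A_E)$ (being nilpotent); I would mention this as a quick alternative proof on the algebra side, but the Hom-descent argument above has the virtue of treating both cases uniformly.
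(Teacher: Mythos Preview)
Your proof is correct and uses essentially the same ingredients as the paper's: the Hom base change formula \eqref{eq:tensorhom} together with faithful flatness of $E/k$. The paper argues by contrapositive (taking a non-split sequence with \emph{simple} quotient $U$ and observing that all maps $U\to V$ land in the kernel, hence the same is true after tensoring), whereas you track the cokernel of $\phi$ directly for an arbitrary submodule $W$; these are minor presentational variants of the same argument.
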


\begin{proof}
We consider the case of $G$-modules; the proof for $A$-modules is identical. We prove the contrapositive.
Suppose that $V$ is not semisimple.
Then there is a non-split short exact sequence $0\to M \to V \to U \to 0$ with $U$ simple.
Since $U$ is simple, every element of $\Hom_G(U,V)$ must actually have image in the submodule $M$; that is
$\Hom_G(U,V) = \Hom_G(U,M)$.
Tensoring with $E$, we get from \eqref{eq:tensorhom} that 
$\Hom_{G_{E}}(U_{E},V_{E}) = \Hom_{G_E}(U_E,M_E)$.
In other words, the short exact sequence $0\to M_{E} \to V_{E} \to U_{E} \to 0$ also has no splitting, so $V_{E}$ is not semisimple.
\end{proof}

The first part of the next result provides a partial converse to \cref{lem:semisimpledown}; it is well-known, but we include a proof for completeness.

\begin{lemma}\label{lem:sepext}
Suppose that $E/k$ is a separable algebraic field extension, and $V$ is a finite-dimensional $G$-module or $A$-module.
\begin{itemize}
\item[(i)] $V$ is semisimple if and only if $V_{E}$ is semisimple.
\item[(ii)] $\Soc^i(V_{E})=(\Soc^i(V))_{E}$ and $\Rad^i(V_{E})=(\Rad^i(V))_{E}$.
\end{itemize}
\end{lemma}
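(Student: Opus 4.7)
My plan is to reduce both parts to a Jacobson-radical analysis over a finite-dimensional $k$-algebra. For the $G$-module case I choose a finite-dimensional subcoalgebra $C \subseteq k[G]$ through which the comodule structure map of $V$ factors, and set $A := C^*$; then the lattices of $G$-submodules ($= C$-subcomodules) and $A$-submodules of $V$ coincide, and the same holds after base change with $A_E = C_E^*$. Hence both statements reduce to the case of a finite-dimensional module $V$ over a finite-dimensional $k$-algebra $A$.

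For (i), the ``if'' direction is \cref{lem:semisimpledown}. For the ``only if'' direction, after replacing $A$ by $A/\Ann_A(V)$ I may assume $A$ itself is semisimple, so the claim becomes that $A_E$ is semisimple. Wedderburn gives $A = \prod_i M_{n_i}(D_i)$ with each $D_i$ a finite-dimensional division $k$-algebra of centre $K_i$, and
\[
A_E = \prod_i M_{n_i}\bigl(D_i \otimes_{K_i} (K_i \otimes_k E)\bigr).
\]
Separability of $E/k$ is equivalent to $E \otimes_k F$ being reduced for every field $F \supseteq k$; applied with $F = K_i$, the finite-dimensional commutative $K_i$-algebra $K_i \otimes_k E$ is reduced, hence a product of field extensions $L_{ij}/K_i$, and so each factor $D_i \otimes_{K_i} L_{ij}$ is a central simple algebra over $L_{ij}$. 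Thus $A_E$ is semisimple. If $E/k$ is infinite, any $A_E$-submodule of the finite-dimensional $V_E$ is already defined over a finite separable subextension $E'/k$ (by a row-reduction argument on coefficients), so an $A_{E'}$-complement in $V_{E'}$ base-changes to the required $A_E$-complement in $V_E$.

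For (ii), the key ingredient is $\Jac(A_E) = \Jac(A) \otimes_k E$ for $E/k$ separable algebraic: the inclusion $\supseteq$ holds because $\Jac(A)$ is nilpotent (as $A$ is artinian), so its base change is a nilpotent two-sided ideal; the reverse inclusion follows from (i) applied to the semisimple $A$-module $A/\Jac(A)$, showing $A_E/\Jac(A)_E = (A/\Jac(A))_E$ is semisimple. Combined with the standard identities for finite-dimensional $A$-modules
\[
\Soc^i(V) = \Ann_V(\Jac(A)^i), \qquad \Rad^i(V) = \Jac(A)^i V,
\]
and the fact that kernels and images commute with the flat base change $-\otimes_k E$, one concludes $\Soc^i(V)_E = \Ann_{V_E}(\Jac(A_E)^i) = \Soc^i(V_E)$ and $\Rad^i(V)_E = \Jac(A_E)^i V_E = \Rad^i(V_E)$.

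The main obstacle is the Wedderburn step in (i): the centre $K_i/k$ of $D_i$ is in general not separable, so the argument genuinely uses preservation of reducedness of $E \otimes_k K_i$, which is precisely the separability of $E/k$ as a base-change--stable property. Without separability of $E/k$ the conclusion fails, as is illustrated by the non-rigid tensor products of purely inseparable extensions appearing later in the paper.
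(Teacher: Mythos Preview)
Your argument is correct, but it follows a genuinely different route from the paper's. The paper argues part (i) by Galois descent: passing to the separable closure $k_s$, the socle $\Soc_{G_{k_s}}(V_{k_s})$ is stable under $\Gal(k_s/k)$ and hence has a $k$-form inside $V$; when $V$ is simple this forces $\Soc_{G_{k_s}}(V_{k_s}) = V_{k_s}$, and then \cref{lem:semisimpledown} applied to $k_s/E$ gives the result over $E$. Part (ii) is then deduced by duality and an easy induction reducing to $\Soc(V)_E = \Soc(V_E)$. By contrast, you pass through a finite-dimensional coalgebra to replace $G$ by a finite-dimensional algebra $A$, and then invoke Wedderburn together with the characterisation of separability via reducedness of $K_i\otimes_k E$; part (ii) follows from the identity $\Jac(A_E)=\Jac(A)\otimes_k E$ and the Jacobson-radical formulas for $\Soc^i$ and $\Rad^i$. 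Your approach makes the role of the separability hypothesis completely transparent and handles the $A$-module and $G$-module cases uniformly, at the cost of the extra subcoalgebra reduction; the paper's Galois-descent argument is shorter and stays entirely on the $G$-module side.

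Two minor comments. First, your separate treatment of infinite $E/k$ in (i) is unnecessary: since each $K_i/k$ is finite, $K_i\otimes_k E$ is finite-dimensional over $E$ and hence a finite product of fields whenever it is reduced, so the Wedderburn argument already applies uniformly. Second, the subcoalgebra trick requires $V$ to be finite-dimensional; for general $V$ one should first reduce to that case (e.g.\ for (i) to a single simple summand), which is implicit in the paper's argument as well.
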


\begin{proof} Again we only give the proof for $G$-modules, with the proof for $A$-modules being completely analogous.
Suppose first that $E$ is separably closed and $V$ is simple.
Then $\Soc_{G_E}(V_E)$ is a non-trivial $G_E$-submodule of $V_E$ which is stable under the Galois group of $E/k$, 
and so has a $k$-form.
That is, there is a $G$-submodule $U$ of $V$ with $U_E = \Soc(V_E)$.
Since $V$ is simple and $\Soc_{G_E}(V_E)$ is non-trivial, we have $U = V$.
Using \cref{lem:semisimpledown}, we can now deduce for all separable extensions $E/k$ and all finite-dimensional $G$-modules $V$ that we have $\Soc_{G_E}(V_E)= \Soc_G(V)_E$.
This is enough to deduce (i) and the first part of (ii).
The second part of (ii) follows by repeating the argument with the radical in place of the socle.
\end{proof}

\subsubsection{Minimal fields of definition for Jacobson radicals}\label{sec:minfield}
\begin{defn}Let $A$ be a finite-dimensional $k$-algebra, $K/k$ be a field extension and $M$ an $A_K$-module. An intermediate field $K/E/k$ is a \emph{field of definition} for $M$ if there is an $A_E$-module $N$ such that $N_K$ and $M$ are isomorphic $A_K$-modules. If $E$ is a field of definition for $M$ admitting no proper subfield of definition, then $E$ is a \emph{minimal field of definition} for $M$.

  Similarly, if $M$ is an $A$-module and $M'$ is an $A_K$-submodule of $M_K$, then $E$ is a field of definition for $M'$ if there is a submodule $N$ of $M_E$ such that $N_K=M'$, and it is minimal if $E$ admits no proper subfield of definition.
\end{defn}

\begin{remark}In general there is no guarantee of a minimal field of definition: in \cite[Sec.~6]{BR19} one can find an example of a two-dimensional module for the quaternion algebra $A:=\Q\{x,y\}/(x^2=y^2=-1,\ xy=-yx)$ that is defined over the field $K=\Q(a,b)/(a^2+b^2+1)$, and whenever it is defined over some subfield $E$ of $K$, it is also defined over some proper subfield of $E$. \end{remark}

In the special case $A=k$, so that $M$ is a $K$-vector space, one sees that $M\cong N_K$ if $N$ is a $k$-vector space with a basis of the same cardinality as that of $M$ so that its minimal field of definition exists and is $k$. When $V$ is any $k$-vector space and $W$ a $K$-subspace for some field extension $K/k$, it is explained in \cite[Rk.~1.1.7]{CGP15} how to construct the unique minimal field of definition $E$ for $W$: one takes a basis $\{e_i\}_{i\in I}$ of $V$, with a subset $\{e_j\}_{j\in J}$ that maps to a basis $\{\overline{e_j}\}_{j\in J}$ of $(V\otimes K)/W$; then one takes for $E$ the subfield of $K$ spanned by the coefficients of the remaining $\{\overline{e_i}\}_{i\in I\setminus J}$ when expressed as linear combinations of the $\overline{e_j}$. See also  \cite[Sec.~4.8] {EGAIV} for a discussion of fields of definition.

For an artinian $k$-algebra $A$, we will be interested in the field of definition of the Jacobson radical $\Jac(A_{\bark})$ as a submodule of $A_{\bark}$---the construction above implies its existence.

\begin{lemma}\label{lem:mfod}
 Let $k$ be a field and let $A$ be a finite-dimensional algebra over $k$.
 Let $\mathcal{J}:=\Jac(A_{\bark})$ denote the Jacobson radical of the base change $A_{\bark} = A\otimes_k \bark$.
 \begin{itemize}
  \item[(i)] If $A=K$ is a purely inseparable field extension of $k$, then as a $\mathcal{J}$ has minimal field of definition $K$. 
 \item[(ii)] More generally, the minimal field of definition of $\mathcal{J}$ is a finite purely inseparable extension $K/k$
 (and, in particular, it exists).\end{itemize}
 Let $J\subseteq A_{K}$ be such that $\mathcal{J} = J_\bark$.
\begin{itemize}\item[(iii)]  For any extension $E/K$ we have $\Jac(A_E)=J_E$.
\item[(iv)] The module $A_K$ is rigid if and only if $A_E$ is rigid for some field extension $E/K$ if and only if $A_E$ is rigid for all field extensions $E/K$.
\end{itemize}
\end{lemma}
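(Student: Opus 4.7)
The statement combines four assertions that we handle in order.

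For (i), since $K/k$ is purely inseparable, $A_\bark=K\otimes_k\bark$ is an artinian local ring with residue field $\bark$, whose maximal ideal $\mathcal{J}$ is the kernel of the multiplication map. Taking $J=\ker(K\otimes_k K\to K)=\Jac(A_K)$ shows that $K$ is a field of definition. For minimality, any field of definition $k\subseteq E\subseteq\bark$ with descent $J_0\subseteq A_E$ satisfies $\dim_E(A_E/J_0)=\dim_\bark(A_\bark/\mathcal{J})=1$, so $A_E/J_0\cong E$. Since the quotient $A_E\to E$ factors the multiplication map $A_E\to\bark$, whose image is the compositum $KE$, we deduce $KE=E$, i.e.\ $K\subseteq E$.

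For (ii), the Jacobson radical $\mathcal{J}$ is canonically defined in $A_\bark$ and is therefore stable under the action of $\Gal(\bark/k^{\mathrm{perf}})$ on the second factor of $A\otimes_k\bark$, where $k^{\mathrm{perf}}$ is the perfect closure of $k$ in $\bark$. Since $\bark/k^{\mathrm{perf}}$ is Galois, Galois descent yields a descent of $\mathcal{J}$ to $A\otimes_k k^{\mathrm{perf}}$, so the minimal field of definition sits inside $k^{\mathrm{perf}}$ and is purely inseparable over $k$. The construction from \cite[Rk.~1.1.7]{CGP15} presents this field as the subfield of $k^{\mathrm{perf}}$ generated by finitely many elements---the coefficients of a basis of $\mathcal{J}$---each satisfying $x^{p^n}\in k$ for some $n$, so the extension is finite.

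For (iii), the crux is to show that $B:=A_K/J$ is absolutely semisimple, meaning that $B\otimes_K F$ is semisimple for every extension $F/K$. Indeed $B\otimes_K\bark=A_\bark/\mathcal{J}$ is a product of matrix algebras over $\bark$; picking a common extension $\bar F$ of $F$ and $\bark$ over $K$, we see $B\otimes_K\bar F=(A_\bark/\mathcal{J})\otimes_\bark\bar F$ is again a product of matrix algebras. The algebra version of \Cref{lem:semisimpledown} applied to $\bar F/F$ forces $B\otimes_K F=A_F/J_F$ to be semisimple. Combined with the nilpotence of $J$ (and hence of $J_F$), this gives $\Jac(A_F)=J_F$.

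For (iv), (iii) together with flatness of $E/K$ immediately yields $\Rad^m(A_E)=\Rad^m(A_K)\otimes_K E$, and (using that the right annihilator of $J^i$ in $A_K$ is the kernel of a $K$-linear map between finite-dimensional spaces) $\Soc^i(A_E)=\Soc^i(A_K)\otimes_K E$; hence Loewy lengths are preserved. The delicate point is the behaviour of the block decomposition. Writing $A_K=\bigoplus_j B_j$ with each $B_j$ indecomposable of Loewy length $n_j$, the base change $(B_j)_E=\bigoplus_\iota C_{j,\iota}$ may split further with Loewy lengths $m_{j,\iota}\leq n_j$, and the rigidity identities for $B_j$ and for each $C_{j,\iota}$ involve a priori different Loewy lengths. \emph{The main obstacle is to prove $m_{j,\iota}=n_j$ for every $\iota$}; once this is established, the identity $\Rad^{n_j-i}(B_j)=\Soc^i(B_j)$ base-changes summand-wise to the rigidities of the $C_{j,\iota}$, and faithfully flat descent delivers the biconditional. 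To show equality of Loewy lengths, the separability of $A_K/J$ from (iii) implies that $F_j:=Z(B_j/\Jac(B_j))$ is a finite separable extension of $K$, so the blocks of $(B_j)_{K_s}$ are indexed by the $K$-embeddings $F_j\hookrightarrow K_s$, on which $\Gal(K_s/K)$ acts transitively by algebra automorphisms preserving Loewy length. Since the Brauer group of $K_s$ is trivial, no further block-splitting occurs upon base change to an algebraic closure $\bar E$ containing both $K_s$ and $E$, and the blocks of $(B_j)_E$ arise by $\Gal(\bar E/E)$-coalescence of blocks of $(B_j)_{\bar E}$---all of which have Loewy length $n_j$.
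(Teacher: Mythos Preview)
Your arguments for (i) and (iii) are correct and close in spirit to the paper's, though for (iii) you spell out details the paper leaves implicit. For (ii) you take a more abstract route: rather than applying Artin--Wedderburn over $k_s$ to identify the minimal field explicitly as the compositum of the purely inseparable centres of the simple factors and then descending along $\Gal(k_s/k)$, you observe directly that $\mathcal J$ is $\Gal(\bark/k^{\mathrm{perf}})$-stable and invoke Galois descent. Your argument is shorter; the paper's is more explicit, and that explicitness is exploited later when the field $k_V$ is computed from the Conrad--Prasad data.

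For (iv) you correctly isolate a genuine subtlety the paper's proof passes over lightly: when an indecomposable summand of $A_K$ splits further over $E$, one must know that all the new summands keep the same Loewy length in order to transport the rigidity identity $\Rad^{n_j-i}=\Soc^i$ in both directions. Your Galois-theoretic resolution---split over $K_s$, use transitivity of the Galois action to equalise Loewy lengths, then note that no further splitting occurs en route to $\bar E$---is exactly the right mechanism.

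There is, however, a slip in the execution that you should repair. You write the decomposition $A_K=\bigoplus_j B_j$ with $B_j$ indecomposable and then form $F_j:=Z(B_j/\Jac(B_j))$, claiming it is a field. If $B_j$ is a ring-theoretic block this fails in general: for $B_j$ the upper-triangular $2\times 2$ matrices over $K$ one has $B_j/\Jac(B_j)\cong K\times K$, so $F_j$ is not a field and $\Gal(K_s/K)$ does not act transitively on the blocks of $(B_j)_{K_s}$. The clean fix is to work with projective indecomposable modules instead: if $P$ is a PIM of $A_K$ with simple head $S$ and $D:=\End_{A_K}(S)$, then $Z(D)$ \emph{is} a single finite separable extension of $K$ by (iii); the simple summands of $S_{K_s}$---and hence the indecomposable summands of $P_{K_s}$---form one $\Gal(K_s/K)$-orbit; and since $A_{K_s}/J_{K_s}$ is split, these summands remain indecomposable over $\bar E$. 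The remainder of your argument then goes through unchanged. (The phrase ``since the Brauer group of $K_s$ is trivial'' is not the operative reason for the absence of further splitting; what matters is that $A_{K_s}/J_{K_s}$ is already a product of matrix algebras over $K_s$, so no new central idempotents appear over $\bar E$.)
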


\begin{proof}
  (i). Because $K$ is purely inseparable, for any algebraic extension $E/k$ the algebra $K\otimes_k E$ is local, since there is precisely one embedding $K\hookrightarrow \overline{E}=\bark$ and so there is just one possible quotient field: the compositum of $K$ and $E$. 
  Seen as an $E$-algebra this quotient field is $1$-dimensional precisely when $K\subseteq E$, hence $\dim \Jac(K\otimes_k \bark)=[K:k]-1$ and $K$ is the required minimal field of definition.
  
(ii). Since $A$ is finite-dimensional, by the Artin--Wedderburn theorem we have $$A_{\text{ss}}:=A_{k_s}/\Jac(A_{k_s})\cong\prod_{i=1}^r\Mat_{n_i}(k_i)$$ for $n_i\in\NN$ and each $k_i$ a finite purely inseparable field extension of $k_s$. By (i) we have that $k_i$ is the minimal field of definition for $\Jac(\Mat_{n_i}(k_i)_\bark)=\Mat_{n_i}(\Jac((k_i)_\bark))$ and we claim that the compositum $L$ of the $k_i$ is the finite purely inseparable extension of $k_s$ which is the minimal field of definition for $\Jac(A_{\text{ss}})_\bark$. Indeed we have $(A_{\text{ss}})_L/\Jac((A_{\text{ss}})_L)\cong\prod\Mat_{n_i}(L)$, which is an absolutely semisimple $L$-algebra expressed as a quotient of $A_L$ by a nilpotent ideal. Hence $\mathcal{J}$ is defined over $L$. Conversely, if $E$ does not contain some $k_i$, then there is some quotient algebra $\Mat_{n_i}(k_i)_E$ of $A_E$ whose Jacobson radical is $\Mat_{n_i}(\Jac(k_i\otimes_k E))$; by (i), $\Jac(k_i\otimes_k E)_\bark$ is a strict subalgebra of $\Jac(k_i\otimes_k \bark)$ and so $\mathcal{J}$ is not defined over $E$.

Since $\bark\cong \kp\otimes k_s$ where $\kp=k^{p^{-\infty}}$, there is a purely inseparable field extension $K'/k$ such that $K'\otimes_k k_s$ is an extension of $k_s$ containing $L$; by Galois descent this implies that the minimal field of definition $K$ over $k$ of $\Jac(A_L)$ (hence $\mathcal{J}$) is purely inseparable.

(iii). Keeping the notation already established, we now claim that $L = K\otimes_k k_s$.
First note that $K\otimes_k k_s$ is a field, since $K/k$ is purely inseparable and $k_s/k$ is separable.
Since $\mathcal{J}$ is $K$-defined, it is $(K\otimes_k k_s)$-defined, and hence $L\subseteq K\otimes_k k_s$ (we established that $L$ is the minimal field of definition of $\mathcal{J}$ over $k_s$).
But $L/k$ is an extension over which $\mathcal{J}$ is defined, so $K\subseteq L$, and the claim is proved.

Now we have observed that $A_L/J_L=(A_K/J_K)_L$ is absolutely semisimple.
If $E/K$ is any extension, we may identify all the fields so far encountered in an algebraic closure $\overline{E}$ of $E$.
The compositum $EL$ in $\overline{E}$ is generated by elements of the separable algebraic extension $L/K$, and hence $EL/E$ is separable.
Since $A_{EL}/J_{EL} = (A_L/J_L)_{EL}$ is semisimple, so is $A_E/J_E$, by an application of \cref{lem:semisimpledown}.

(iv). Since any artinian algebra is a direct product of local artinian algebras whose factors are the indecomposable summands of $A$ as a left $A$-module, we may assume it is local, since rigidity of a module is predicated on its indecomposable summands. Let $E/K$ be any field extension.
Since $A_E$ is artinian, the radical series for $A_E$ is formed by taking the powers of $\Jac(A_E)$, and the socle series by taking the annihilators of those powers, see \Cref{sec:artin}.
Part (iii) says that $\Jac(A_E) = J_E$, so the powers of $\Jac(A_E)$ are the base changes to $E$ of the powers of $J$.
Then the annihilators of the powers of $\Jac(A_E)$ are the base changes to $E$ of the annihilators of the powers of $J$ as well: certainly we have an inclusion $\Ann(J^i)_E \subseteq \Ann(J_E^i)$, and then a consideration of dimension gives equality.  
Hence, if the socle and radical series coincide over $E$, they already coincide over $K$, and vice versa.
\end{proof}

\begin{remark}\label{mfodsep}
Let us underline the aspect of the construction of $K$ in the proof which shows that the minimal field of definition of $\mathcal{J}$ commutes with separable extensions: this is used at the start of the proof of part (iii) above. 
Suppose $J\subseteq A_K$ is such that $J_\bark=\mathcal{J}$.
If $E/k$ is some separable extension,  $E':=K\otimes_k E$ is a field since $K/k$ is purely inseparable and $E/k$ is separable, and $\mathcal{J}$ is $E'$-defined via the ideal $J_{E'}$ of $A_{E'}$.
Thus the minimal field of definition over $E$ of $\mathcal{J}$ is contained in $E'$.
On the other hand, if $L/E$ is any extension over which $\mathcal{J}$ is defined, then $L/k$ is also an extension over which $\mathcal{J}$ is defined, so $L$ contains $K$.
Hence $L$ contains $E'$ and $E'$ is the minimal field of definition of $\mathcal{J}$ over $E$. 
\end{remark}

\subsection{Two Morita equivalences}\label{sec:morita}
Recall that $k$ is a commutative unital ring. Denote the comultiplication on $k[G]$ by $\Delta_G$ and the surjective algebra map $\epsilon_G:k[G]\to k$ that corresponds to `evaluation at the identity point'. From this, one can define an algebra structure on $k[G]^*:=\Hom_k(k[G],k)$ as follows. For $\mu,\nu\in k[G]^*$, we define $\mu\cdot\nu$ as $(\mu\otimes \nu)\circ \Delta_G$; more explicitly, if $\Delta_G(f)=\sum g_i\otimes h_i$ then \[(\mu\cdot\nu)(f)=\sum \mu(g_i)\otimes\nu(h_i).\] Then one checks from the Hopf algebra axioms that this makes $k[G]^*$ into an associative $k$-algebra with $\epsilon_G$ its unit---see \cite[I.7.7]{Jan03}. Furthermore, a $G$-module $M$ becomes naturally a $k[G]^*$-module: if $\Delta_M$ denotes the comodule map, then $\mu$ acts on $M$ by $(1\otimes \mu)\circ\Delta_M$, or more explicitly, if $\Delta_M(m)=\sum m_i\otimes f_i$, then $\mu(m):=\sum m_i\mu(f_i)$; see \cite[I.7.11]{Jan03} for more details.

Now suppose that $k[G]$ is a flat and projective $k$-module---immediate when $k$ is a field. Since $k[G]=k\cdot 1 \oplus I_1$ for $I_1$ the functions vanishing at the identity point, we have that $I_1$ is also flat and projective and $k[G]^*=k\cdot \epsilon_G \oplus I_1^*$. Under these hypotheses every $G$-module is locally finite and in fact $k[G]^*\cdot m=kGm$, where $kGm$ denotes the smallest $G$-submodule of $M$ containing $m$; see \cite[Prop.~3]{Sesh} and its proof. Also, by projectivity, we may apply the dual basis lemma, \cite[2.9]{Lam2} to find an indexing set $\mathbb{I}$ and a family of pairs $\{(f_i,\mu_i)\mid i\in \mathbb{I}\}\subset I_1\times I_1^*$, such that for any $f\in I_1$, $\mu_i(f)\neq 0$ for only finitely many $i$ and \begin{equation}
  f=\sum_{i\in\mathbb{I}}\mu_i(f)f_i.\label{dbeq}
\end{equation} For convenience let us add a new element $0$ to $\mathbb{I}$ with $f_0=1$ and $\mu_0=\epsilon_G$. Then \cref{dbeq} holds with $f\in k[G]$. (Clearly $\{f_i\mid i\in\mathbb{I}\}$ is now a generating set of $k[G]$ as a $k$-module.) Let $\M$ be the subalgebra of $k[G]^*$ generated by the $\mu_i$. 

\begin{lemma}\label{lem:GmodMmod}
  With the above hypotheses, suppose $M$ is a $G$-module and $m\in M$. Then $\M m=kGm$. Hence a $k$-submodule $N$ of $M$ is an $\M$-submodule if and only if it is a $G$-submodule.
  \end{lemma}
  \begin{proof}
  It is convenient to first show how to deduce the second sentence given the first. 
  First, if $N$ is a $G$-submodule of $M$ then the flatness of $G$ implies $\Delta_M(N)\subseteq N\otimes k[G]\subseteq M\otimes k[G]$, and so $\mu(N)\subseteq N\otimes \mu(k[G])\subseteq N$ for any $\mu\in k[G]^*$, showing that $N$ is an $\M$-submodule
  (note that this does not rely on the first sentence of the statement). 
  Conversely, if $N$ is an $\M$-submodule then the first sentence of the statement implies $$N=\sum_{m\in N} \M\cdot m=\sum_{m\in N}kGm,$$ and the latter is a sum of $G$-submodules, hence a $G$-submodule.

For the first statement, it is shown in \cite[I.2.13]{Jan03} that $\Delta_M(m)\in kGm\otimes k[G]$ and hence we may write $\Delta_M(m) = \sum_{j\in\mathbb{J}} m_j\otimes g_j$ for some finite indexing set $\mathbb{J}$ and with each $m_j\in kGm$.
When we do this, we have $kGm=\sum_{j\in\mathbb{J}} km_j$, again by \cite[I.2.13]{Jan03}. 
Since for each $g_j$, there are only finitely many $\mu_i$ which do not vanish on $g_j$, and since $\mathbb{J}$ is finite, we can find a finite subset $\mathbb{I}'\subseteq\mathbb{I}$ containing $0$ such that $\sum_{i\in\mathbb{I}'}f_i\mu_i$ is the identity map on the $k$-submodule of $k[G]$ generated by the $g_j$. 
    
Let $M'=\sum_{i\in \mathbb{I}'}k\mu_i(m)$ and claim $M'=kGm$. 
Since $kGm$ is a $G$-submodule containing $m$, it is an $\M$-submodule by the start of the proof; in particular, $\mu_i(m)\in kGm$ for each $i$, so $M'\subseteq kGm$.   

For the other inclusion, we first show that $\Delta_m(m)\in M'\otimes k[G]$.
For this, note that for each $i\in\mathbb{I}'$ we have $\mu_i(m) = \sum_{j\in \mathbb{J}} \mu_i(g_j) m_j$ and thus 
    \[\sum_{j\in \mathbb{J}} m_j \mu_i(g_j)\otimes 1= \sum_{j\in\mathbb{J}} m_j \otimes \mu_i(g_j)\] is in $M'\otimes k[G].$
    Multiplying by $1\otimes f_i$ we get 
    $\sum_{j\in \mathbb{J}} m_j\otimes f_i\mu_i(g_i) \in M'\otimes k[G]$, and now summing up over $i\in \mathbb{I}'$ we get 
    \[\sum_{j\in \mathbb{J}} m_j\otimes \left(\sum_{i\in \mathbb{I}'} f_i\mu_i(g_j)\right) = \sum_{j\in \mathbb{J}} m_j\otimes g_j \in M'\otimes k[G],\]
  as required.

Thus, we may write $\Delta_M(m) =   \sum n_i\otimes h_i$ for some $n_i\in M'$ and $h_i\in k[G]$.
If we let $N = \sum kn_i$, then it follows from \cite[I.2.13(1)]{Jan03} that $kGm\subseteq N$.
But then we have $kGm\subseteq M'$ and we are done.
\end{proof}
If $\{(f_i,\mu_i)\}$ is a dual basis in the sense described above, then it remains so after flat base change. Hence the construction of $\M$ from $k[G]^*$ commutes with flat base change. 
Therefore we may replace $(G,M,\M)$ by $(G_E,M_E,\M_E)$ in the theorem and we get the following:

  \begin{cor}
    Suppose $M$ is a $G$-module and $E$ a flat $k$-algebra. Then the $G_{E}$-submodules of $M_{E}$ are just the $\M_{E}$-submodules of $M_{E}$. Moreover if $R$ denotes the image of $\M$ in $\End_k(M)$, then the $G_{E}$-submodules of $M_{E}$ are just the $R_{E}$-submodules of $M_{E}$.
  \end{cor}

We apply the corollary to the case where $M$ is a simple $G$-module which is projective over $k$. By local finiteness, we have $M$ is finitely generated and so $M^*$ is too \cite[2.11]{Lam2}. Furthermore, as $\End_k(M)\cong M^*\otimes_k M$ \cite[Ex.~\S2.20]{Lam2} so also $\End_k(M)$ is finitely generated and projective. Now Schur's Lemma \ref{lem:schur} tells us that $\End_G(M)\cong \End_R(M)=:D$ is a division algebra over $k$, and it is finitely-generated as a $k$-module, hence artinian. As $R$ is left primitive (i.e.~has a faithful left module), it must be simple---\!\!\cite[11.7]{Lam1}. 

An $R$-module $P$ is said to be a \emph{left generator} for $R$ if $\Hom_R(P,?)$ is a faithful functor from the category of left $R$-modules to the category of abelian groups---\!\!\cite[18.7]{Lam2}. If in addition $P$ is finitely generated and projective it is called a \emph{progenerator} for $R$. 
As $R$ is simple, the category of left $R$-modules is semisimple and so any nonzero module is a generator; 
thus $M$ is a progenerator of $R$. Since $D$ is the centraliser of $R$ in $\End_k(M)$, we get that $M$ is a right $D$-module and $M$ is an $(R,D)$-bimodule that is faithfully balanced \cite[18.19,18.21]{Lam2}; this is to say the maps $R\to \End_D(M)$ and $D\to \End_R(M)$ are both ring isomorphisms---so $R$ is also the centraliser in $\End_k(M)$ of $D$. In particular, $R$ and $D$ are Morita equivalent---\!\!\cite[18.33]{Lam2}.

Now, under flat base change through $k\to E$,  we have $D_E\cong \End_{R_E}(M_E)$.
We also have that $M_E$ is a progenerator for $R_E$: it is finitely-generated projective since $M$ is and $E$ is flat; and it is a generator
by the characterisation in \cite[18.8(3)]{Lam2} applied to $M$ (resp. $M_E$) and $R$ (resp. $R_E$). 
Thus $M_E$ is a faithfully balanced $(R_E,D_E)$-bimodule, so  $R_E$ and $D_E$ are again Morita equivalent and
 \cite[18.44]{Lam2} furnishes us with:
\begin{prop}\label{prop:REDE}
The  lattice of (left) $R_E$-submodules of $M_E$ is isomorphic to the lattice of left ideals of $D_E$.
\end{prop}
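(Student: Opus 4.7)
The plan is to extract the stated lattice isomorphism as an immediate application of the Morita equivalence between $R_E$ and $D_E$ established just before the statement. That equivalence is implemented by the faithfully balanced $(R_E, D_E)$-bimodule $M_E$; concretely, by the functor $F := \Hom_{R_E}(M_E, -)$ together with its quasi-inverse $M_E \otimes_{D_E} -$, arranged so that left $R_E$-modules correspond to left $D_E$-modules, as in \cite[18.44]{Lam2}.

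The first key step is the general categorical fact that any equivalence of abelian categories preserves subobject lattices: if $N$ is an $R_E$-module, then $N' \mapsto F(N')$ is a lattice isomorphism between $R_E$-submodules of $N$ and $D_E$-submodules of $F(N)$, with inverse furnished by the quasi-inverse functor. I would apply this to $N = M_E$.

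The second key step is to identify $F(M_E)$ explicitly. Because $M_E$ is faithfully balanced, we have $F(M_E) = \Hom_{R_E}(M_E, M_E) \cong D_E$, viewed as the regular left $D_E$-module. Its $D_E$-submodules are, by definition, precisely the left ideals of $D_E$. Composing with the lattice isomorphism from the previous step yields the claim.

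I do not anticipate any substantive obstacle: the entire content is packaged by the Morita setup in the preceding paragraphs, and this is exactly what \cite[18.44]{Lam2} records. The one point that needs a little care is the bookkeeping of handedness, to ensure that $R_E$-submodules of $M_E$ correspond to left (rather than right) ideals of $D_E$; this is dictated by the convention in which $D_E$ is presented as $\End_{R_E}(M_E)$ above.
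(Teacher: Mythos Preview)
Your proposal is correct and follows the same approach as the paper: both derive the lattice isomorphism directly from the Morita equivalence between $R_E$ and $D_E$ (via the faithfully balanced bimodule $M_E$), invoking \cite[18.44]{Lam2}. The paper simply cites that reference without further unpacking, whereas you spell out the two ingredients (equivalences preserve subobject lattices, and $F(M_E)\cong D_E$ as the regular module), but the substance is identical.
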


We can push this analysis one step further.
Let $Z :=Z(D)$ be the centre of the division ring $D$.
Considering $D$ as a left $D$-module, multiplication on the right by elements of $D$ gives an identification $\End_D(D) \cong D$, 
and the centraliser of $D$ is just the centre $Z$.
It is clear that we get another Morita equivalence with $D$ itself as the progenerator this time, and we deduce:
\begin{cor}\label{cor:DEZE}
The  lattice of left ideals of $D_E$ is isomorphic to the lattice of ideals of $Z_E$.
\end{cor}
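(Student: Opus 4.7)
The plan is to build a second Morita equivalence precisely in parallel with the one used to prove \cref{prop:REDE}, but now with $D$ itself taking the role of the progenerator. The preparation is already sketched in the paragraph preceding the corollary: view $D$ as a left $D$-module (free of rank one, hence finitely generated projective, hence a progenerator); identify $\End_D(D) \cong D$ via right multiplication; then observe that the subring of $\End_k(D)$ centralising both the left and right actions of $D$ is the centre $Z$. In clean form, this packages $D$ as a faithful progenerator for the action of the enveloping algebra $\End_Z(D) \cong D\otimes_Z D^{op}$, with $\End_{\End_Z(D)}(D)=Z$, so that $D$ is a faithfully balanced bimodule realising a Morita equivalence whose other partner is $Z$.

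Next I would push the equivalence through the flat base change $k \to E$, exactly as in \cref{sec:morita}. Since $D$ is finite-dimensional (and therefore projective) over $k$, $\Hom$ commutes with flat base change, so $\End_Z(D) \otimes_k E \cong \End_{Z_E}(D_E)$ and $D_E$ is again a progenerator; the centraliser identification also persists, giving $\End_{\End_{Z_E}(D_E)}(D_E)=Z_E$. Thus the Morita equivalence extends, with $D_E$ as the faithfully balanced progenerator for the pair whose second member is $Z_E$.

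Finally, I would invoke \cite[18.44]{Lam2}, exactly as was done to conclude \cref{prop:REDE}: the lattice of submodules of the progenerator $D_E$ corresponds bijectively with the lattice of left ideals of $Z_E$, which, because $Z_E$ is commutative, is just the lattice of ideals. Identifying the relevant submodules of $D_E$ with (left) ideals of $D_E$ in the sense of the statement completes the proof.

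The main obstacle is the centraliser step: to get the centre $Z$ out as the Morita partner (rather than simply $D^{op}$ via right multiplication), one has to act on $D$ by both sides simultaneously, i.e.\ regard $D$ as a module for the enveloping algebra $\End_Z(D)$. Once this is set up properly, the verifications of progenerator status and of base-change compatibility reduce to formal consequences of flatness of $E/k$ and the finite-dimensionality of $D$, and the proof becomes a near-verbatim repetition of the argument given for \cref{prop:REDE}.
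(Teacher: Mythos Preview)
Your elaboration via the enveloping algebra $\End_Z(D)\cong D\otimes_Z D^{op}$ is the right way to make the paper's sketch precise, and the base-change step is fine. The gap is in your last sentence: the $\End_{Z_E}(D_E)$-submodules of $D_E$ are exactly the \emph{two-sided} ideals of $D_E$, not the left ideals, so the identification you propose does not hold. In fact the corollary as literally stated is false: with $k=\Q$, $D$ the rational quaternions, $Z=\Q$, and $E=\Q(i)$, one has $D_E\cong M_2(E)$ with a $\mathbb{P}^1$-family of minimal left ideals, while $Z_E=E$ has only the two ideals $0$ and $E$.

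What your argument does establish is the correct statement that the lattice of \emph{two-sided} ideals of $D_E$ is isomorphic to the lattice of ideals of $Z_E$ (equivalently: $D_E=D\otimes_Z Z_E$ is Azumaya over $Z_E$). This is also what the paper's one-line sketch yields, read charitably, and it suffices for the only application, \cref{cor:GEDE}: the socle and radical series of ${}_{D_E}D_E$ are the powers and annihilators of $\Jac(D_E)$, which for an Azumaya algebra are two-sided ideals extended from $Z_E$, so rigidity of $D_E$ (hence of $M_E$, via \cref{prop:REDE}) is equivalent to rigidity of $Z_E$.
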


We make one final observation about homomorphisms of $G$-modules.

\begin{lemma}\label{lem:GhomMhom}
Suppose that $M$ is a finitely-generated projective $G$-module and $N$ is another $G$-module.
Then a $k$-linear map $M\to N$ is a $G$-module homomorphism if and only if it is an $\M$-module homomorphism.
\end{lemma}

\begin{proof}
Since $M$ is finitely generated and projective, we can give the space of $k$-linear maps $\Hom(M,N)$ the structure of a $G$-module and identify
\[
\Hom_G(M,N) \cong \Hom(M,N)^G,
\]
where the RHS is the $G$-fixed points, see \cite[I.2.10(6)]{Jan03}.
But the $G$-fixed points coincide with the $\M$-fixed points, by \Cref{lem:GmodMmod},
so we're done.
\end{proof}

\begin{center}\framebox{For the rest of the paper, $k$ will denote a field.}\end{center}

In particular, since $k$ is a field, then $G$ satisfies the hypotheses at the beginning of the section; so the conclusions of \cref{prop:REDE} and  \cref{cor:DEZE} both hold. The following result  also applies to algebras over a field and enables the use of the Conrad--Prasad classification.

\begin{lemma}\label{directproduct}Suppose that $A$ and $B$ are two  associative unital $k$-algebras and put $C:=A\otimes_k B$, where $A$ and $B$ are considered as commuting subalgebras of $C$ via $A\cong A\otimes 1$ and $B\cong 1\otimes B$. 
  Let $W$ be a simple $C$-module which is finite-dimensional over $k$. Then the following hold:
 \begin{enumerate}
   \item $W|_{A}$ is an  isotypic semisimple $A$-module, say $U^r$ for a simple $A$-module $U$, and $r\in\NN$; similarly let $W|_{B}\cong V^s$ for a simple $B$-module $V$. There is a surjective $C$-module homomorphism $\psi:U\otimes V\to W$.\label{ds3}
 \end{enumerate}
   Let $D:=\End_A(U)$, $E:=\End_B(V)$ and $F:=\End_C(W)$. Then:
   \begin{enumerate}
   \setcounter{enumi}{1}
   \item $\End_C(U\otimes V)\cong D\otimes E$;\label{ds1}
   \item $Z(D\otimes E)\cong Z(D)\otimes Z(E)$;\label{ds2}
   \item $C/\Ann_C(W)$ is a simple $k$-subalgebra of $\End_k(W)$ generated by the images of $A\otimes 1$ and $1\otimes B$; \label{ds4}
   \item if $k=k_s$, then $D = Z(D)$, $E = Z(E)$ and $F = Z(F)$ are finite extensions of $k$, necessarily purely inseparable, and $F$ is the compositum of $D$ and $E$;\label{ds5}
   \item for arbitrary $k$, all of the following fields coincide\label{ds6}:
   \begin{enumerate}\item the minimal fields of definition of $\Jac(F_\bark)$ and $\Jac(Z(F)_\bark)$; 
     \item the compositum of the minimal fields of definition of $\Jac(D_\bark)$ and $\Jac(E_\bark)$;
     \item the compositum of the minimal fields of definition of $\Jac(Z(D_\bark))$ and $\Jac(Z(E_\bark))$.\label{dstest}
   \end{enumerate}
 \end{enumerate}
\end{lemma}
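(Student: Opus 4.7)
For parts (i)--(iv), I would proceed with standard Clifford / density / Morita arguments. For (i), I pick a simple $A$-submodule $U\subseteq W$; since $A$ and $B$ commute, $BU:=\sum_{b\in B}bU$ is a nonzero $C$-submodule of $W$, so equals $W$, and each $bU$ is an $A$-homomorphic image of $U$, hence either zero or isomorphic to $U$, giving $W|_A\cong U^r$. Symmetrically $W|_B\cong V^s$. For the surjection $\psi$, I take $X:=\Hom_B(V,W)$ with its natural $A$-action $a\cdot f:=a\circ f$ (well-defined because $A$ and $B$ commute on $W$), and note $X$ embeds in $\Hom_k(V,W)\cong W^{\dim V}$ as an $A$-module and is therefore isotypic of type $U$; picking a simple $A$-submodule $X'\cong U$, the evaluation $X'\otimes V\to W$, $f\otimes v\mapsto f(v)$, is directly $C$-linear with nonzero image and so surjective. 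Part (ii) uses that $U\otimes V\cong U^{\dim V}$ as $A$-modules, giving $\End_A(U\otimes V)\cong D\otimes_k\End_k(V)$; writing $f=\sum d_i\otimes\phi_i$ in a $k$-basis of $D$, commutation with $1\otimes B$ forces each $\phi_i\in E$. Part (iii) is the standard computation $Z(R\otimes_k S)=Z(R)\otimes Z(S)$ by basis expansion, applied with $R=D$ and $S=E$; and (iv) follows from Artin--Wedderburn applied to the finite-dimensional left-primitive algebra $\bar C:=C/\Ann_C(W)$, which is generated by the images of $A$ and $B$ by construction.

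For (v), assume $k=k_s$. The key observation is that every finite-dimensional division algebra over $k_s$ is a field: its center is a purely inseparable extension $L$ of $k_s$, and $\mathrm{Br}(L)=0$ (as $L^{\mathrm{sep}}=\bark$), forcing the algebra to equal $L$. Thus $D$, $E$, $F$ are purely inseparable field extensions of $k_s$. By Jacobson density, $\bar A:=A/\Ann_A(U)\cong \End_D(U)\cong \Mat_m(D)$ with $m=\dim_D U$, and similarly $\bar B\cong \Mat_n(E)$. Since $\bar C$ is generated by the commuting images of $A$ and $B$, we obtain a surjection $\bar A\otimes_k\bar B\cong \Mat_{mn}(D\otimes_k E)\twoheadrightarrow \bar C$. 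Now $D\otimes_k E$ is a commutative local artinian $k_s$-algebra with residue field the compositum $DE\subseteq\bark$ (each of $D, E$ admitting a unique embedding into $\bark$), so $\Mat_{mn}(D\otimes_k E)$ has a unique simple quotient, namely $\Mat_{mn}(DE)$. Since $\bar C$ is simple by (iv), we get $\bar C\cong \Mat_{mn}(DE)$, and reading off centers gives $F=Z(\bar C)=DE=Z(D)\cdot Z(E)$.

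For (vi), the lattice isomorphism of \cref{cor:DEZE} shows that the minimal fields of definition of $\Jac(D_{\bark})$ and $\Jac(Z(D)_{\bark})$ coincide, and likewise for $E$ and $F$; this collapses (a), (b) and (c) to the single identity $\kappa_{Z(F)}=\kappa_{Z(D)}\cdot\kappa_{Z(E)}$, where $\kappa_X$ denotes the minimal field of definition of $\Jac(X_{\bark})$. By \cref{mfodsep} I may then reduce to $k=k_s$. There each of $Z(D)_{k_s}, Z(E)_{k_s}, Z(F)_{k_s}$ splits as a product of purely inseparable extensions of $k_s$ indexed by Galois orbits of simple components, and \cref{lem:mfod}(i) identifies each $\kappa$ with the compositum in $\bark$ of those fields. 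The simple $C_{k_s}$-constituents $W_i$ of $W_{k_s}$ satisfy $F_i=D_iE_i$ by (v), and taking composita over all $i$ yields $\prod_i F_i=(\prod_i D_i)(\prod_i E_i)$, which is the required identity. The main obstacle is this last step: one must verify that, as $i$ ranges over the simple $C_{k_s}$-constituents of $W_{k_s}$, the $D_i$ and $E_i$ exhaust the Galois orbits of components of $Z(D)_{k_s}$ and $Z(E)_{k_s}$ respectively. This should follow from Galois transitivity on the constituents together with the isotypy of each $W_i$ as an $A_{k_s}$- or $B_{k_s}$-module, but the bookkeeping is the substantive content of the argument.
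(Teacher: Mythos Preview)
Your argument is correct and follows the paper's strategy closely: for (i)--(iii) the paper simply cites Bourbaki (your construction in (i) uses $\Hom_B(V,W)$ where the paper uses $\Hom_A(U,W)$, but this is just swapping the roles of $A$ and $B$); for (iv) both invoke primitivity of $\bar C$; and for (vi) both reduce to $k=k_s$ via \cref{mfodsep} and apply (v) constituent-by-constituent. The one place your route differs is (v): the paper re-applies the framework of (i)--(iv) with $(D,E,Z(D),Z(E))$ in the roles of $(A,B,D,E)$ to exhibit $Z(F)$ as the unique simple quotient of the local ring $Z(D)\otimes_k Z(E)$, whereas your explicit computation of $\bar C$ via the surjection $\bar A\otimes_k\bar B\cong\Mat_{mn}(D\otimes_k E)\twoheadrightarrow\bar C$ is more concrete and sidesteps having to verify that the recursive setup is well-posed.

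On the ``main obstacle'' you flag at the end of (vi): it is simpler than you suggest and needs no Galois transitivity. The isotypy $W|_A\cong U^r$ base-changes to $W_{k_s}|_{A_{k_s}}\cong(U_{k_s})^r$, so the simple $A_{k_s}$-constituents of $W_{k_s}$ are \emph{exactly} those of $U_{k_s}$. Hence as $i$ runs over the simple $C_{k_s}$-summands $W_i$ of $W_{k_s}$, the fields $D_i$ (endomorphism fields of the $A_{k_s}$-isotypes of the $W_i$) automatically exhaust the fields appearing in $D_{k_s}\cong\End_{A_{k_s}}(U_{k_s})$; likewise for $E$. The required equality of composita then drops out of (v).
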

\begin{proof} 
 \cref{ds3} is \cite[VIII, ~\S12.1, Prop.~2]{BourbAlg8}, but we nutshell the details. One takes a simple $A$-submodule $U$ of $W$ and considers $X:=\Hom_A(U,W)$, which becomes a $B$-module via $(b\circ\phi)(u)=b(\phi(u))$. Let $\varphi:V\to X$ be the embedding into $X$ of some simple $B$-submodule. Then define $\psi:U\otimes V\to W$ by $\psi(u\otimes v)\mapsto \varphi(v)(u)$. Then one checks $\psi$ is a non-zero $C$-module map, and the isotypicity of $W|_{A}$ follows from that of $(U\otimes V)|_A\cong U^{\dim V}$.
 
Now \cref{ds1} and \cref{ds2} are \cite[VIII, \S12.5, Prop.~5(a)] {BourbAlg8}. By Morita theory---see \cref{prop:REDE}---the fact that $W$ is simple implies that $C/\Ann_C(W)$ is simple, giving \cref{ds4}. 

Take $k=k_s$. 
In this case, $D$ is a centrally finite division algebra, and $Z(D)$ is a finite extension of $k = k_s$, hence purely inseparable over $k$ and separably closed.
By \cite[15.12]{Lam1}, there is a maximal subfield of $D$ which is a separable extension of $Z(D)$; but $Z(D)$ is separably closed and hence itself a maximal subfield of $D$.
Now \cite[15.7]{Lam1} implies that $D = Z(D)$ is a field as claimed. 
The same argument goes to show $E=Z(E)$ and $F=Z(F)$.   
Now $D\otimes_k E$ is a tensor product of purely inseparable extension fields and so it is local, and so $F$ must identify with its quotient field, which is the compositum of $D$ and $E$, completing the proof of \cref{ds5}.

Since the minimal fields of definition of the Jacobson radicals commute with separable extension (Remark \ref{mfodsep}), we may base change to $k_s$, whereupon $W$ is still semisimple by \cref{lem:sepext}, and argue with each simple submodule individually. This gives us a collection of division rings $(D_i,E_i,F_i)$ and the minimal fields of definition of the Jacobson radical of $F_{k_s}:=\prod F_i$ is the compositum of all the $Z(D_i)$ and $Z(E_i)$---\cref{lem:mfod}(i). This proves \cref{ds6}.
\end{proof}

Suppose that $H_1$ and $H_2$ are algebraic $k$-groups and put $J:=H_1\times H_2$.
Since we are working over a field, we can form the $k$-algebras $\M_1$, $\M_2$ and $\M$ respectively from dual bases $\{(f_i,\mu_i)\}$, $\{(g_j,\nu_j)\}$ and $\{f_i\otimes g_j,\mu_i\otimes\nu_j\}$ respectively. 
Using \Cref{lem:GmodMmod}, \Cref{lem:GhomMhom}, and the previous result 
in the case $A=\M_1$, $B=\M_2$, and $C=\M$, we get the following.

\begin{cor}\label{redtofactors}
  Suppose that $V$ is a simple $J$-module. 
Then the restriction of $V$ to $H_i$ decomposes as a direct sum of copies of a single simple module $W_i$ for $i=1,2$.
Denote by 
$$D:=\End_{J}(V),\ D_1:=\End_{H_1}(W_1),\ D_2:=\End_{H_2}(W_2),$$ the respective division rings. 
Then if $K$, $K_1$, $K_2$ are the respective minimal fields of definition of their geometric Jacobson radicals (or equivalently, the geometric Jacobson radicals of the centres $Z(D)$, $Z(D_1)$ and $Z(D_2)$), then $K$ is the compositum of $K_1$ and $K_2$.
\end{cor}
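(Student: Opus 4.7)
The plan is to derive this as a direct consequence of Lemma~\ref{directproduct} applied to the distribution algebras of $H_1$, $H_2$, and $J$. Set $A:=\Dist(H_1)$, $B:=\Dist(H_2)$, and $C:=\Dist(J)$. By \cite[I.7.9(3)]{Jan03} we have an identification $C \cong A\otimes_k B$ of $k$-algebras, so the tensor-product hypothesis of the lemma is in place, with $A$ and $B$ commuting subalgebras of $C$ in the required way.

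The first step is to translate cleanly between $G$-modules and $\Dist(G)$-modules for $G\in\{J,H_1,H_2\}$. Each of these groups is smooth and connected over the field $k$, hence integral, noetherian, and infinitesimally flat, so \cite[I.7.15]{Jan03} guarantees that the $\Dist(G)$-submodules of any $G$-module coincide with its $G$-submodules; in particular $V$ is simple as a $C$-module, and any simple $\Dist(H_i)$-submodule of $V$ will automatically be a simple $H_i$-submodule. Equation~(\ref{eqn:disthoms}) moreover gives $\End_G(M)=\End_{\Dist(G)}(M)$ for any finite-dimensional $G$-module $M$, so the division rings $D$, $D_1$, $D_2$ introduced in the statement also compute the endomorphism rings on the distribution-algebra side.

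With these identifications in hand, Lemma~\ref{directproduct}(i) applied to the simple $C$-module $V$ produces simple modules $W_1$ (for $A$) and $W_2$ (for $B$), together with positive integers $r,s$ such that $V|_A \cong W_1^r$ and $V|_B \cong W_2^s$. Reading this as a statement about $H_i$-modules, via the correspondence of the previous paragraph, yields precisely the isotypic decomposition asserted in the first sentence of the corollary, and provides the simple $H_i$-modules $W_i$ whose endomorphism rings are $D_i$.

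To conclude, I invoke Lemma~\ref{directproduct}(vi): the minimal field of definition of $\Jac(D_\bark)$, equivalently that of $\Jac(Z(D)_\bark)$, is the compositum of the minimal fields of definition of $\Jac((D_1)_\bark)$ and $\Jac((D_2)_\bark)$, namely $K_1$ and $K_2$. The only mildly delicate step is the systematic passage between $\Dist$-algebra modules and algebraic group modules, which is what allows us to read off the same division rings on both sides; with the preliminaries of \cref{sec:prelims} in place, the result is otherwise an immediate specialisation of Lemma~\ref{directproduct}.
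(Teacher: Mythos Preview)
Your proof is correct and follows essentially the same approach as the paper: the paper's own proof is just the one sentence preceding the corollary, which applies Lemma~\ref{directproduct} with $A=\Dist(H_1)$, $B=\Dist(H_2)$, $C=\Dist(J)\cong A\otimes_k B$ via \cite[I.7.9(3)]{Jan03}. You have simply spelled out in more detail the passage between $G$-modules and $\Dist(G)$-modules (using \cite[I.7.15]{Jan03} and \cref{eqn:disthoms}), which the paper leaves implicit.
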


From all of this, we get
\begin{cor}\label{cor:GEDE}
Let $M$ be a simple $G$-module, and $E/k$ an extension of fields.
The $G_E$-module $M_E$ is rigid if and only if the commutative algebra $Z_E$ is rigid (as a module for itself). 
\end{cor}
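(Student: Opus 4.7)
The plan is to chain the two Morita equivalences already in hand and show that rigidity is transported across them. As recorded in the paragraph just before \cref{prop:REDE}, for the flat extension $E/k$ the lattice of $G_E$-submodules of $M_E$ coincides with the lattice of its $R_E$-submodules. \cref{prop:REDE} identifies this with the lattice of left ideals of $D_E$, and \cref{cor:DEZE} identifies the latter with the lattice of ideals of the commutative algebra $Z_E$. Composing, I get a lattice isomorphism $\Lambda$ between the $G_E$-submodules of $M_E$ and the $Z_E$-submodules of $Z_E$ (viewed as a module over itself).

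Next, I would note that $\Lambda$ is the shadow on submodule lattices of a composed equivalence of abelian module categories $R_E\text{-Mod}\simeq D_E\text{-Mod}\simeq Z_E\text{-Mod}$, under which $M_E$ is carried first to the regular left $D_E$-module and then to the regular $Z_E$-module. Any such equivalence preserves simple objects, maximal subobjects, arbitrary sums and intersections of subobjects, and finite direct sum decompositions. Consequently, it preserves the socle and the radical of any object, and then by induction on the Loewy length it preserves the whole socle and radical series termwise; it also preserves the decomposition of any finite-length object into indecomposable summands.

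Finally, rigidity is defined summand-by-summand as the coincidence of socle and radical series on each indecomposable factor, so this termwise preservation forces $M_E$ and the regular $Z_E$-module to be simultaneously rigid or non-rigid, which is the desired equivalence. The only subtle point is checking that $M_E$ genuinely corresponds to the regular $Z_E$-module under the composite equivalence; but this is formal, since $M_E$ is the very progenerator used to construct the first Morita equivalence (giving $\End_{R_E}(M_E)^{\mathrm{op}}\cong D_E$), and $D_E$ is the progenerator used for the second (giving $\End_{D_E}(D_E)\cong D_E$ with centraliser $Z_E$), so regular modules are transported to regular modules. No serious obstacle is anticipated: the argument is essentially bookkeeping with the Morita equivalences of \cref{sec:morita}.
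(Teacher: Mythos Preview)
Your approach matches the paper's: compose the lattice isomorphisms of \cref{prop:REDE} and \cref{cor:DEZE} to identify the $G_E$-submodule lattice of $M_E$ with the ideal lattice of $Z_E$, then observe that socle series, radical series, and indecomposable-summand decompositions are all determined by the submodule lattice, so rigidity transfers. The paper compresses this into one sentence.

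Where you overreach is in upgrading the second lattice isomorphism to a categorical equivalence $D_E\text{-Mod}\simeq Z_E\text{-Mod}$. No such equivalence exists when $D$ is noncommutative: already for $E=k$, a Morita equivalence between the division ring $D$ and its centre $Z$ would force the endomorphism rings of their respective unique simple modules to be isomorphic, i.e.\ $D\cong Z$. (Concretely, take $k=Z=\Q$ and $D$ the Hamiltonian rational quaternions as in \cref{rem:noncan}.) The paper's wording before \cref{cor:DEZE} is admittedly suggestive, but what it actually states and uses is only the lattice correspondence. Fortunately you do not need the categorical upgrade: every property you invoke in your second paragraph---simples, maximal subobjects, sums, intersections, complements, indecomposable summands, Loewy length---is determined by the submodule lattice alone. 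Drop the claimed Morita equivalence and argue directly from the lattice isomorphism $\Lambda$; then your proof is the paper's.
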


\begin{proof}

The hypotheses on $k$ and $G$ ensure that all the preceding results in this section hold, and in particular we can use  \Cref{prop:REDE} to relate the $G_E$-module structure on $M_E$ to the $R_E$-module structure.
But then we see that $M_E$ is rigid as an $R_E$-module if and only if $Z_E$ is rigid, by \Cref{cor:DEZE}
applied to this case.
\end{proof}

Thus, to complete the proof of \cref{thm:mainone}, we need to show the existence of a purely inseparable extension $K/k$ such that $Z_E$ is rigid for every extension $E/K$. 
This is achieved in the next section, where we show that we can take $K$ to be the minimal field of definition of $\Jac(Z_\bark)$.

Finally in this section we give some further generalities about semisimple $k$-algebras, which will help us when we are describing
the endomorphism rings of simple modules in Part \ref{partii}.

\begin{defn}\label{defn:psplitting}
  Let $A$ be a finite-dimensional semisimple $k$-algebra over a field of characteristic $p$. 
  We say $A$ is \emph{$p$-split} if $A$ is $k$-isomorphic to a direct product of matrix rings over purely inseparable field extensions of $k$.

  If $A$ is a $k$-algebra and there exists a field extension $E/k$ such that $A_E:=A\otimes_kE$ is $p$-split as an $E$-algebra, then we say that $A$ is \emph{$p$-splittable}. 
  If $[E:k]$ is minimal such that $A_E$ is $p$-split, then we say $E$ is a \emph{$p$-splitting field for $A$}.

  If $A$ is a simple $k$-algebra whose centre $Z(A)$ is a purely inseparable extension of $k$, then we say that $A$ is a \emph{$p$-central simple algebra} (or \emph{$p$-CSA}). 
  If $A$ is also a division algebra then we say $A$ is a \emph{$p$-division algebra}.
\end{defn}

We aim to show:

\begin{theorem}
 Let $A$ be a finite-dimensional semisimple $k$-algebra and $Z:=Z(A)$ its centre. Then: 
\begin{itemize}
\item[(i)] the algebra $A$ is $p$-split by a field $\mathscr{E}$ that is finite and Galois over $k$;
\item[(ii)] if $A$ is simple then any $p$-splitting field $E$ for $A$ contains the normal closure $F$ of the separable part $Z_\sep$ of the field extension $Z/k$. If $\ell:=[Z_\sep:k]$ then $$Z(A_F)\cong \underbrace{ZF\times\dots\times ZF}_{\ell-\text{times}},\qquad\text{and}\qquad A_F\cong A_1\times\dots\times A_\ell$$ as a direct product of $p$-CSAs. 
\end{itemize}
\end{theorem}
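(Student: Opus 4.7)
For part (i), the plan is to first reduce via Artin--Wedderburn to the case when $A$ is simple, i.e.\ central simple over its centre $Z$; a finite Galois $p$-splitting field for each simple factor admits a common Galois closure over $k$, which then $p$-splits the product. Assume $A$ is simple, let $Z_\sep$ be the separable part of $Z/k$, and let $F$ be the normal closure of $Z_\sep$ over $k$. Then $F/k$ is finite Galois, $Z_\sep \otimes_k F \cong F^\ell$ where $\ell=[Z_\sep:k]$, and using that purely inseparable and separable extensions are linearly disjoint over their intersection, $Z \otimes_k F \cong \prod_{j=1}^\ell L_j$ with each $L_j$ purely inseparable over $F$; correspondingly $A_F \cong \prod_{j=1}^\ell A_j$ with each $A_j$ a CSA over $L_j$, i.e.\ a $p$-CSA over $F$.

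To finish (i) I would next kill the Brauer class of each $A_j$ by a finite separable extension of $F$. The key step is this: pick a finite separable splitting field $M_0/L_j$ for $A_j$ and let $M_j$ be the separable closure of $F$ inside $M_0$. Then $M_0 = L_j \cdot M_j$, since $M_0/(L_j\cdot M_j)$ is simultaneously separable (through $L_j$) and purely inseparable (through $M_j$), hence trivial; equivalently $M_0 \cong L_j \otimes_F M_j$. Thus the separable extension $M_j/F$ suffices to split $A_j$. Taking $\mathscr{E}$ to be the Galois closure over $k$ of the compositum of $F$ with all the $M_j$ yields a finite Galois $\mathscr{E}/k$ for which $A_\mathscr{E} \cong \prod_j \Mat_{n_j}(L_j \otimes_F \mathscr{E})$ is $p$-split.

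For (ii), the containment $F \subseteq E$ for any $p$-splitting field $E$ is forced by the centre: if $A_E$ is $p$-split then $Z(A_E) = Z \otimes_k E$ is a product of purely inseparable $E$-extensions, so its maximal $E$-subalgebra separable over $E$, namely $Z_\sep \otimes_k E$, must be a product of copies of $E$ of $E$-dimension $\ell$. This forces every $k$-embedding of $Z_\sep$ into $\bark$ to factor through $E$, whence $F \subseteq E$. The block decomposition $A_F \cong A_1 \times \cdots \times A_\ell$ into $p$-CSAs, together with the corresponding decomposition of $Z(A_F)$, then falls out of the analysis in (i) via the isomorphism $Z_\sep \otimes_k F \cong F^\ell$ (each block centre being $Z \otimes_{Z_\sep, \sigma_j} F$, which reduces to $F$ precisely when $Z/k$ is separable).

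The main obstacle I foresee is the passage from a splitting field of $A_j$ over the purely inseparable base $L_j$ to a splitting field obtained by separable extension of $F$; the linear-disjointness observation that $M_0 \cong L_j \otimes_F M_j$ is what makes this possible, and once it is in hand the remaining assembly into a finite Galois $\mathscr{E}/k$ is essentially bookkeeping.
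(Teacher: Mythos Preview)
Your proposal is correct and follows essentially the same route as the paper: both reduce to the simple (or division) case via Artin--Wedderburn, decompose $Z\otimes_k F$ using $Z_\sep\otimes_k F\cong F^\ell$ together with linear disjointness of the purely inseparable part against $F$, and then kill each remaining Brauer class by a separable extension of $F$ obtained by descending a separable splitting field over $L_j$ (the paper phrases this via a maximal separable subfield of the division algebra and cites \cite[15.12]{Lam1}, while you use an arbitrary separable splitting field, but the linear-disjointness step $M_0\cong L_j\otimes_F M_j$ is identical). Your argument for $F\subseteq E$ in (ii) via the étale subalgebra $Z_\sep\otimes_k E$ is likewise the same as the paper's, just packaged globally rather than one primitive element at a time; your parenthetical remark that the block centres are $Z\otimes_{Z_\sep,\sigma_j}F$ (equal to $F$ only when $Z=Z_\sep$) matches what the paper actually proves, namely $Z_F\cong (ZF)^\ell$.
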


\begin{proof}
  The existence of a field extension which $p$-splits $A$ is obvious: taking $E=k_s$ one gets that $A_E$ is still semisimple and is therefore still a product of matrix rings over division $k_s$-algebras by Artin--Wedderburn. But the Brauer group of $k_s$ is trivial; so all these division algebras are purely inseparable field extensions of $k_s$.
  Now $k_s/k$ is a filtered union of finite Galois extensions, so this $p$-splitting must already happen over some finite Galois extension of $k$, proving (i) (see \cite[Sec. 8]{EGAIV} for this sort of argument in great generality).
    
For (ii), we first prove the result for a division $k$-algebra $D$; the general result follows quickly from Artin-Wedderburn.
Let $Z=Z(D)$, let $Z_\sep$ be the separable part of this field extension of $k$, and
let $E/k$ be some field extension that $p$-splits $D$; so $D_E:=D\otimes_k E$ is a direct product of matrix algebras over a set  of finite purely inseparable field extensions of $E$; and its centre $Z(D_E)$ is isomorphic to the direct product of these fields. Take any $x\in Z_\sep$ with minimal polynomial $f$ over $k$. Then $k[x]/f$ identifies with a subfield of $Z_\sep$ and therefore $(k[x]/f)\otimes_k E\cong E[x]/f$ identifies with an $E$-subalgebra of $Z_E$. 
  But since $f$ is separable, $E[x]/f$ is also isomorphic to a direct product of separable extensions of $E$, and since $Z_E$ is a product of purely \emph{in}separable extensions of $E$ we see that $E$ contains all the roots of $f$.
  Thus $E$ contains the normal closure $F$ of $Z_\sep$ as claimed. This proves the first part of (ii).

  For the second, note that we have 
  $$
  Z_F= Z\otimes_k F \cong Z\otimes_{Z_\sep} (Z_\sep\otimes_k F).
  $$ 
  Writing $Z_\sep = k[x]/f$ for some separable polynomial $f$ and using the fact that the roots of $f$ all lie in $F$ by the previous paragraph, we see that $Z_\sep\otimes_k F$ is isomorphic to a product of $\ell=[Z_\sep:k]$ copies of $F$, conjugate under the Galois group of $F/k$. 
  Since $Z/Z_\sep$ is purely inseparable and $F/Z_\sep$ is separable, $Z$ and $F$ are linearly disjoint; so $Z_F$ is isomorphic to $\ell$ copies of the compositum $ZF:=(Z\otimes_{Z_\sep}F)$, and these fields are still conjugate under $\Gal(F/k)$. 
  Now $D_F$ is semisimple because $F/k$ was separable; so $$D_F\cong A_1\times \dots\times A_\ell$$ where the $A_i$ are $p$-CSAs that are all conjugate under $\Gal(F/k)$: 
 this decomposition follows from the decomposition of $1\in Z_F$ into central primitive idempotents corresponding to the $\ell$ Galois-conjugate factor fields of $Z_F$, see \cite[7.22]{Lam2}. This proves (ii) in our special case; and the general case of (ii) is also clear as $A\cong \Mat_r(D)$ for some division $k$-algebra $D$.  
\end{proof}

\begin{remark}\label{rem:noncan}
While the field $F$ in part (ii) of the theorem is uniquely determined, the field $E$ is not, even given the specification of \Cref{defn:psplitting} that $[E:k]$ should be minimal. 
For example when $A=\Q\oplus i\Q\oplus j\Q\oplus k\Q$ is the $\mathbb{Q}$-division algebra of Hamiltonian rational quaternions, then it is ($p$-)split by many quadratic field extensions. Moreover,  following Amitsur's original construction, there are by now many examples of finite-dimensional central $k$-division algebras $D$ which are \emph{non-crossed}; that is to say that there is no maximal subfield of $D$ that is Galois. From this it also follows that $D$ cannot be constructed from a cocycle using Noether's method---see the introduction of \cite{Han04} for an overview. In any case, the point to make here is that the field $\mathscr{E}$ is highly non-canonical in general.

  In the scenario of interest to us---namely when $D=\End_G(V)$ for $V$ a finite-dimensional $G$-module---we will show by contrast that there is a \emph{unique} $p$-splitting field $\mathscr{E}$, and that is Galois over $k$; see \cref{thm:torussplit}.
\end{remark}

\subsection{On the rigidity of finite-dimensional algebras}\label{sec:rigidity}

\begin{theorem}\label{thm:Drigid}
Suppose that $R$ is a finite-dimensional simple $k$-algebra (though not necessarily central simple).
Let $k'/k$ be the minimal field of definition of $\Jac(R_{\bark})$. Then $R_{k'}$ is absolutely rigid. 
\end{theorem}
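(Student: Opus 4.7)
The plan is to reduce the claim to a question about a commutative algebra via Artin--Wedderburn and the Morita equivalences of \Cref{sec:morita}. Write $R \cong M_n(D)$ with $D$ a finite-dimensional division $k$-algebra and $Z := Z(D)$. For each field extension $E/k$, \Cref{prop:REDE} and \Cref{cor:DEZE} identify the lattice of $R_E$-submodules of the simple $R_E$-module $M_E = D_E^n$ with the left ideal lattice of $D_E$, and in turn with the ideal lattice of $Z_E$. Since $R_E \cong M_E^n$ as an $R_E$-module, rigidity of $R_E$ as a module over itself is equivalent, via these lattice isomorphisms, to rigidity of $Z_E$ as a module over itself; likewise the minimal field of definition $k'$ of $\Jac(R_\bark)$ coincides with that of $\Jac(Z_\bark)$. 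The theorem thus reduces to showing that for a finite field extension $Z/k$ and $k'$ the minimal field of definition of $\Jac(Z_\bark)$, the commutative algebra $Z_{k'}$ is absolutely rigid.

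I next apply \Cref{lem:mfod}(iv) twice to collapse the problem: $Z_{k'}$ is absolutely rigid iff $Z_{k'}$ is rigid iff $Z_\bark$ is rigid (taking $\bark$ as a field extension of $k'$). Now $Z_\bark$ splits as a product $\prod_{i=1}^\ell L_i$ of local $\bark$-algebras indexed by the $\ell := [Z_\sep:k]$ embeddings $\sigma_i \colon Z_\sep \hookrightarrow \bark$, and by \Cref{sumrigid} its rigidity is equivalent to that of a single (Galois-representative) factor $L_i = Z \otimes_{Z_\sep, \sigma_i} \bark$. Setting $F := \sigma_i(Z_\sep)$ and $K := \tilde\sigma_i(Z) \subseteq \bark$ (where $\tilde\sigma_i$ is the unique extension of $\sigma_i$ to $Z$, which exists since $Z/Z_\sep$ is purely inseparable), we have $L_i \cong K \otimes_F \bark$. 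A final application of \Cref{lem:mfod}(iv)---with the base ring taken to be $F$, the algebra $K$, and the minimal field of definition of $\Jac(K_\bark)$ equal to $K$ itself by \Cref{lem:mfod}(i)---reduces the problem to rigidity of $K \otimes_F K$ as a module over itself.

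The heart of the argument is then the local claim: \emph{for any finite purely inseparable extension $K/F$, the algebra $K \otimes_F K$ is rigid}. I would prove this via the Macaulay criterion (\Cref{rem:rigidegs}), by showing $K \otimes_F K$ is a Gorenstein local $K$-algebra with palindromic Hilbert function for the radical filtration. When $K/F$ is \emph{modular}---admitting a $p$-basis $\alpha_1, \ldots, \alpha_r$ with $\alpha_j^{p^{n_j}} \in F$---the substitution $u_j := \alpha_j \otimes 1 - 1 \otimes \alpha_j$ (which satisfies $u_j^{p^{n_j}} = 0$) yields an isomorphism $K \otimes_F K \cong K[u_1, \ldots, u_r]/(u_j^{p^{n_j}})$, a complete intersection tensor product of truncated polynomial rings whose Hilbert series $\prod_j (1 + T + \cdots + T^{p^{n_j}-1})$ is manifestly palindromic. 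I expect the main obstacle to be the non-modular case, where no global $p$-basis gives such a clean presentation. Here I would exploit the swap involution $a \otimes b \mapsto b \otimes a$, which preserves the maximal ideal $\m := \ker(K \otimes_F K \to K)$ together with all its powers, and perform explicit Sweedler-type reductions (where a non-modular relation such as $\beta^{p^2} = x_0 + x_1 \alpha$ collapses, after the substitutions $u := \tilde\alpha - \alpha$ and $v := \tilde\beta - \beta$, to a single truncated polynomial $K[v]/(v^{p^3})$ upon eliminating $u = x_1^{-1} v^{p^2}$) to show that $K \otimes_F K$ still admits a complete-intersection presentation with symmetric Hilbert series. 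The contrast with the forthcoming \Cref{ex:finally}, where $K \otimes_k E$ can fail to be rigid when $K$ and $E$ are \emph{distinct} purely inseparable extensions, underscores that the diagonal symmetry of $K \otimes_F K$ is indispensable and must be exploited.
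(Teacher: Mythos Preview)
Your reduction to the commutative case via Artin--Wedderburn and the Morita equivalences matches the paper exactly, as does the use of \Cref{lem:mfod}(iv) to reduce absolute rigidity of $Z_{k'}$ to rigidity of a single local factor $K \otimes_F \bark$ of $Z_\bark$, where $K/F$ is finite purely inseparable. The further reduction to $K \otimes_F K$ via a second application of \Cref{lem:mfod}(iv) is valid but optional.

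The gap is the non-modular case of your claim that $K \otimes_F K$ is rigid, which you yourself flag as ``the main obstacle'' and leave as a sketch. The paper sidesteps this entirely by citing a theorem of Rasala \cite[Ch.~2, Thm.~6]{rasala}: for any finite extension $Z/k$ with separable part $M$, the algebra $Z \otimes_M \bark$ is a \emph{simply truncated polynomial algebra} $\bark[x_1,\ldots,x_n]/(x_1^{a_1},\ldots,x_n^{a_n})$, regardless of whether $Z/M$ is modular. An elementary direct check (\Cref{rigidSTP}) then shows such algebras are rigid with palindromic Hilbert function, and \Cref{lem:mfod}(iv) transports this back to $Z_{k'}$. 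Your proposed Sweedler-type eliminations are exactly the mechanism underlying Rasala's proof, so carrying them out in full generality would amount to re-deriving that structure theorem; the single worked example you give is the base step, but the general inductive argument over a tower of simple purely inseparable extensions requires bookkeeping your sketch does not supply. The swap involution on $K \otimes_F K$ is suggestive but does not by itself force palindromicity of the Hilbert function, so that idea alone will not close the gap.
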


Note that the proof of this theorem immediately reduces to the case that $R$ is a field:
since $R$ is simple, the Artin-Wedderburn Theorem says that $R\cong M_n(D)$ for some $D$, where $D$ is a finite-dimensional division $k$-algebra.
For any field extension $E/k$, we have $R_E \cong M_n(D_E)$, and the ideal structure of $R_E$ is therefore identical with that of $D_E$---\cite[3.1]{Lam1}.
But now we may replace $D$ with its centre $Z$, as in \Cref{cor:DEZE}.
Since $R$ is finite-dimensional, $Z/k$ is some finite extension of $k$.
We can also see that the minimal fields of definition of $\Jac(R_\bark)$, $\Jac(D_\bark)$ and $\Jac(Z_\bark)$ all coincide.

\subsubsection{Generalities on tensor products of fields}\label{sec:tensor}
The literature already contains a number of results about the tensor product $A:=K\otimes_k E$ of general field extensions $K/k$ and $E/k$. 
For example, it is a result of Grothendieck, in generalised form by Sharp \cite{Sha77} that $\dim(K\otimes_k E)=\min(\mathrm{tr.deg}(K/k),\mathrm{tr.deg}(E/k))$. 
Furthermore Grothendieck proved in \cite[Lem.~6.7.1.1]{EGAIV} that if one of the extensions has finite type, then $A$ is Cohen-Macauley. This has been generalised in at least two directions: in \cite[Lem.~2.2]{BK02} weakening the hypotheses to demanding $A$ be noetherian; and in \cite[I.Thm.~2]{watanabe} strengthening the conclusion to saying $A$ is  Gorenstein. 

A \emph{simply truncated polynomial algebra (STP algebra)} over a field $k$ is an algebra of the form
$$
A=k[X_1,\ldots,X_n]/(X_1^{a_1},\ldots,X_n^{a_n})
$$
with $a_1\geq a_2 \geq \cdots\geq a_n$ \cite[Ch. 1]{rasala}.
Let $x_i$ denote the image of $X_i$ in $A$.
Then $A$ is a local ring, with maximal ideal $\m$ generated by the $x_i$.
It is clear that if $A$ is an STP algebra then, for any field extension $E/k$, $A_E = A\otimes_k E$ is an STP algebra over $E$.

\begin{lemma}\label{rigidSTP}
An STP algebra $A$ is a rigid local Gorenstein algebra, and hence has a symmetric Hilbert function.
\end{lemma}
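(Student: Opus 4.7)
The plan is to unpack the definition, introduce the natural $\NN^n$-grading on $A$, and then carry out one concrete combinatorial computation which simultaneously delivers all three claims (local Gorenstein, rigid, symmetric Hilbert function). Write $N := \sum_{i=1}^n (a_i-1)$, so the top-degree monomial is $T := x_1^{a_1-1}\cdots x_n^{a_n-1}$ and a $k$-basis of $A$ is given by the \emph{admissible} monomials $x^b := x_1^{b_1}\cdots x_n^{b_n}$ with $0\leq b_i\leq a_i-1$. In this basis, $\mathfrak{m}^j$ is spanned by the admissible $x^b$ with $|b|:=\sum b_i\geq j$, and in particular $\mathfrak{m}^{N+1}=0$, $\mathfrak{m}^N=kT\neq 0$, so the Loewy length of $A$ is $\ell = N+1$.

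Locality is immediate because $A/\mathfrak{m}=k$. For the Gorenstein property I would show $\Soc(A)=kT$ is one-dimensional: clearly $x_iT=0$ for every $i$, while for any admissible $x^b$ with some $b_i<a_i-1$ one has $x_i x^b\neq 0$, so no other basis vector lies in $\Ann_A(\mathfrak{m})$. A zero-dimensional local noetherian ring with simple socle is Gorenstein (as recalled in the excerpt), giving the second claim.

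The heart is the rigidity statement, i.e.\ (in view of \eqref{rigidann}) the equality
\[
\Ann_A(\mathfrak{m}^i) \;=\; \mathfrak{m}^{N+1-i} \qquad (0\leq i\leq N+1).
\]
The inclusion $\supseteq$ is trivial since $\mathfrak{m}^{N+1-i}\cdot\mathfrak{m}^i\subseteq\mathfrak{m}^{N+1}=0$. For the opposite inclusion I would argue monomial-by-monomial: an admissible $x^b$ fails to lie in $\Ann_A(\mathfrak{m}^i)$ iff there exists an admissible $x^c$ with $|c|\geq i$ and $x^{b+c}\neq 0$, i.e.\ $b_j+c_j\leq a_j-1$ for all $j$. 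Such a $c$ exists precisely when $\sum_j\bigl((a_j-1)-b_j\bigr)\geq i$, that is, when $|b|\leq N-i$. Contrapositively, $x^b\in\Ann_A(\mathfrak{m}^i)$ iff $|b|\geq N+1-i$, which gives $\Ann_A(\mathfrak{m}^i)\subseteq\mathfrak{m}^{N+1-i}$. This little counting argument is really the only non-formal step in the proof.

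Finally, symmetry of the Hilbert function (with respect to the radical filtration) follows from rigidity via Macaulay's characterisation recalled just before the lemma; but it can also be verified directly and in parallel with the computation above, since the involution $b\mapsto (a_1-1-b_1,\ldots,a_n-1-b_n)$ on admissible multi-indices sends those with $|b|=j$ bijectively to those with $|b|=N-j$, so $\dim_k(\mathfrak{m}^j/\mathfrak{m}^{j+1})=\dim_k(\mathfrak{m}^{N-j}/\mathfrak{m}^{N-j+1})$. I would present the proof in this order so that the combinatorial dimension count is used just once and delivers both the rigidity and the symmetry.
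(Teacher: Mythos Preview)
Your proof is correct and follows essentially the same approach as the paper: both arguments establish $\Ann_A(\m^i)=\m^{\ell-i}$ by the ``complement to the top monomial'' trick, using $\gamma-\alpha$ (or your $c=(a_j-1-b_j)_j$) as the witness that any admissible monomial of too-low total degree fails to annihilate $\m^i$. The paper derives the Gorenstein property from this computation (since $\Soc(A)=\Ann(\m)=\m^{\ell-1}=kx^\gamma$) rather than checking it first as you do, and it appeals only to Macaulay for the symmetry of the Hilbert function, whereas you additionally give the direct involution $b\mapsto\gamma-b$; these are cosmetic differences, and the core combinatorics is identical.
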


\begin{proof}
Since $A$ is local, it is indecomposable and thus we need to check \eqref{rigidann}.
For a tuple $\beta = (b_1,\ldots,b_n)$ of non-negative integers, we let $x^\beta := \prod_{i=1}^n x_i^{b_i}$,
and for another such tuple $\beta' = (b_1',\ldots,b_n')$ we say $\beta\leq \beta'$ when $b_i\leq b_i'$ for every $i$.  
Note that for any $\beta = (b_1,\ldots,b_n)$, we have $x^\beta = 0$ if and only if there exists some $i$ with $b_i\geq a_i$.
Let $\gamma = (a_1-1,\ldots,a_n-1)$, and let $n = \left(\sum_{j=1}^n (a_j-1)\right) + 1$.
Since $x^\gamma\in\m^{n-1}$ we have $\m^n = \{0\}$ but $\m^{n-1}\neq \{0\}$.
We need to show that $\Ann(\m^i) = \m^{n-i}$ for each $1\leq i\leq n-1$.
It is clear that for any $1\leq i\leq n-1$, $\m^{n-i}$ annihilates $\m^i$.
On the other hand, given $1\leq i\leq n-1$, the ideal $\m^{n-i-1}$ is generated by the $x^\alpha$ with $\alpha=(\alpha_1,\ldots,\alpha_n) \leq \gamma$ and $\sum_{j=1}^n \alpha_j = n-i-1$.
Given any such $\alpha$, let $\beta = \gamma-\alpha$.
Then $x^\alpha x^\beta = x^\gamma \neq 0$, and $\sum_{j=1}^n \beta_j = i$, so $x^\alpha \not\in \Ann(\m^i)$.
Thus $\Ann(\m^i) = \m^{n-i}$, and we see that $A$ is rigid.

The final power $\m^{n-1} = \Ann(\m) = \Soc(A)$ is generated by the element $x^\gamma$, and so is simple as an $A$-module, so $A$ is Gorenstein.
The symmetry of the Hilbert function of $A$ follows from \cite[\S70]{Mac}, as explained in Remark \ref{rem:rigidegs}.
\end{proof}

Now suppose $Z/k$ is some finite extension, and let $M\subseteq Z$ be the separable part of the extension.
From \cite[Ch.~2, Thm.~6]{rasala} we learn that there is some finite (normal) extension $E/Z$ such that $Z\otimes_k E$ is a sum of STP $E$-algebras, and the summands are all isomorphic (via Galois automorphisms) to $Z\otimes_M E$.
In particular, $Z\otimes_k \bark$ is a sum of STP $\bark$-algebras, and all the summands are isomorphic as rings.

\begin{prop}\label{stp}
Let $k'$ be the minimal field of definition of $\Jac(Z_\bark)$.
Then $Z_{k'} := Z\otimes_k k'$ is absolutely rigid.
Thus for all extensions $E/k'$, the Hilbert functions $H(Z_E)$ coincide, and are symmetric about the middle degree term.
\end{prop}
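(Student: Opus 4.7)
The plan is to pass to a sufficiently large extension $F$ of $k'$ where the Rasala-type structure theorem recalled immediately above is fully visible, verify rigidity there directly, and then descend to $k'$ via the base-change control of the Jacobson radical supplied by \cref{lem:mfod}. The Hilbert-function claims will then follow because the whole radical filtration of $Z_{k'}$ base-changes verbatim to any extension.

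First I would fix, using the cited structure theorem for finite field extensions, a finite normal extension $F/k$ which can be enlarged without loss to contain $k'$, such that $Z_F \cong \prod_{i=1}^\ell T_i$ with each $T_i$ an STP $F$-algebra and with the $T_i$ mutually isomorphic (being $\Gal(F/k)$-conjugates of the common algebra $Z\otimes_M F$). By \cref{rigidSTP}, each $T_i$ is a rigid local Gorenstein $F$-algebra, and the mutual isomorphisms force a common Loewy length. Thus \cref{sumrigid}(ii)--(iii) yields that the direct sum $Z_F$ is itself rigid, and now \cref{lem:mfod}(iv), applied with $K=k'$ and the extension $F/k'$, delivers the first conclusion: $Z_{k'}$ is absolutely rigid.

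For the Hilbert function claims, for any extension $E/k'$ I would invoke \cref{lem:mfod}(iii) to get $\Jac(Z_E) = \Jac(Z_{k'})_E$. Taking powers and successive quotients, and using that $E/k'$ is flat, the graded pieces of the radical filtration of $Z_E$ are the base changes to $E$ of those for $Z_{k'}$; since tensor product preserves dimension, $H(Z_E) = H(Z_{k'})$ as polynomials, and thus the Hilbert functions coincide across all such $E$. For the symmetry statement, take $E=F$: each $T_i$ has Hilbert function symmetric about its middle degree by \cref{rigidSTP}, and since the $T_i$ are isomorphic these middle degrees agree, so $H(Z_F) = \sum_i H(T_i)$ is symmetric about this common middle degree. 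This symmetry transports back via the equality $H(Z_{k'}) = H(Z_F)$.

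The main technical care is to ensure the common Loewy length and common middle degree across the summands $T_i$; but both are immediate from the transitive $\Gal(F/k)$-action on the $T_i$ supplied by Rasala's theorem. Everything else is bookkeeping through lemmas already established in \cref{sec:prelims} and \cref{sec:rigidity}.
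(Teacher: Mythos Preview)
Your proof is correct and follows essentially the same approach as the paper's: verify rigidity over a large extension where Rasala's structure theorem exhibits $Z$ as a product of mutually isomorphic STP algebras, then descend via \cref{lem:mfod}(iv), with the Hilbert-function statements following from \cref{lem:mfod}(iii). The only cosmetic difference is that the paper works directly over $\bark$ rather than a finite $F$, and your justification of the isomorphism of the summands via ``$\Gal(F/k)$-conjugacy'' is slightly loose (the compositum $F$ need not be Galois over $k$), but the needed fact---that all summands are isomorphic to $Z\otimes_M F$---holds by base-changing Rasala's conclusion, so no harm is done.
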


\begin{proof}
By the results of \cite{rasala} above, $Z_\bark$ is a sum of isomorphic copies of the STP $\bark$-algebra $Z\otimes_M \bark$;
these will be the blocks of $Z_\bark$.
Since by \cref{rigidSTP} each of these is rigid, so is $Z_\bark$. 
In fact, since the blocks are all isomorphic as rings, we are in the sitiuation where the socle/radical series of $Z_\bark$ is given by powers of its Jacobson radical.
In any case, we can now apply 
\Cref{lem:mfod}(iii) with $A=Z$ to deduce rigidity of $Z_E$ for all extensions $E/k'$.

The final statement about the Hilbert functions follows since the socle and radical series for any $Z_E$ are just the base changes of those for $Z_{k'}$, and 
so all the Hilbert series coincide.
The dimensions of quotients for that series can therefore be calculated over $\bark$, and since $Z_\bark$ is the sum of isomorphic copies of an STP, \Cref{rigidSTP} gives us the symmetry result.
\end{proof}

The proposition above completes the proof of \cref{thm:Drigid}, recalling the observations at the start of this section. It also completes the proof of \Cref{thm:mainone} 
using the Morita equivalences of \cref{sec:morita}:
if $V$ is a simple $G$-module, then $V_{k_V}$ is absolutely rigid,
where $k_V$ is the minimal field of definition of $\Jac(\End_G(V)_\bark)$.

\subsubsection{Example: a non-rigid tensor product of fields}
We give an example of a tensor product of two finite purely inseparable field extensions $K/k$, $E/k$ whose regular module is not rigid,
which shows that \cref{thm:Drigid} does not hold in full generality without the extra hypothesis involving the minimal field of definition $k'$.

\begin{example}\label{ex:finally}
Let $k=\F_2(a,b,c,d)$ where $a,b,c,d$ are algebraically independent transcendental elements.
Consider the following purely inseparable extensions $K/k$ and $E/k$
\begin{align*}
  K&:=k(\underbrace{a^{1/16}+b^{1/4}}_\beta,\underbrace{a^{1/8}+c^{1/4}}_\gamma,\underbrace{a^{3/16}+d^{1/4}}_\delta) = k(\beta,\gamma,\delta)\\
  E&:=k(\underbrace{a^{1/16}}_{f_1},\underbrace{a^{3/16}+a^{1/8}b^{1/4}+a^{1/16}c^{1/4}+d^{1/4}}_{f_2}) = k(f_1,f_2),
\end{align*}
and let $A = K\otimes_k E$ with maximal ideal $\m_A$.

Note that $\gamma^4 = a^{1/2} + c = \beta^8+b^2+c$, and 
$\delta^4 = a^{1/4} + d = (\beta^4 + b)(\gamma^4+c)$,
so $[K:k] = 16\times 4\times 4 = 2^8$.
We also have $[E:k]= 16\times 4 = 2^6$, and hence $\dim_k(A) = 2^{14} = 16,384$.
The compositum of $K$ and $E$ is the field 
$$
KE = k(a^{1/16},b^{1/4},c^{1/4},d^{1/4}) \cong A/\m_A
$$ 
of degree $2^{10}$ over $k$, so the maximal ideal $\m$ has dimension $2^{14} - 2^{10} = 2^{10}\times 15$ over $k$.
Thus $A$ has $16$ composition factors isomorphic to $KE$, with $15$ of them coming from $\m$.

Viewing $A$ as a $K$-algebra through multiplication in the first factor, $A$ has a $K$-basis consisting of
the $2^6$ elements $1\otimes f_1^if_2^j$ where $0\leq i\leq 15$ and $0\leq j\leq 3$.
An element $x = \sum_{i,j} e_{ij}\otimes f_1^if_2^j$ lies in $\m_A$ if and only if $\sum_{i,j} e_{ij}f_1^if_2^j = 0$ in $KE$.
In other words, the elements of $\m_A$ correspond to $K$-linear dependences between the $f_1^if_2^j$.
Note that $f_1^4=a^{1/4}=\beta^4+b\in K$, and $f_2 = \beta f_1^2 + \gamma f_1 + \delta  \in K(f_1)$, so we see that 
the elements
$$
m_1:=1\otimes f_1^4+f_1^4\otimes 1 \qquad \textrm{ and } \qquad m_2:=1\otimes f_2 + \beta\otimes f_1^2+\gamma\otimes f_1+\delta\otimes 1
$$
lie in $\m_A$.
Further, given a general element $\sum_{i,j} e_{ij}\otimes f_1^if_2^j$ in $\m_A$, one can eliminate any occurrences of $f_2$ in the second tensor factors using multiples of $f_2$, and then further eliminate any powers of $f_1^4$ using multiples of $f_1$.
But the elements $1,f_1,f_1^2,f_1^3$ are $K$-linearly independent, and so we conclude that $m_1$ and $m_2$ generate $\m_A$.

The first power of $m_1$ that is $0$ is $m_1^4=0$ and the first power of $m_2$ that is $0$ is $m_2^8= 0$.
Also,
\begin{align*}
m_2^4 
&= 1\otimes f_2^4 + \beta^4\otimes f_1^8+\gamma^4\otimes f_1^4+\delta^4\otimes 1 \\
&= 1\otimes (a^{3/4} + a^{1/2}b + a^{1/4}c + d) + (a^{1/4} + b)\otimes a^{1/2} + (a^{1/2} + c) \otimes a^{1/4} + (a^{3/4} + d)\otimes 1 \\
&= 1\otimes a^{3/4} + a^{1/4}\otimes a^{1/2} + a^{1/2}\otimes a^{1/4} + a^{3/4}\otimes 1 \\
&= m_1^3.
\end{align*}
Thus we have the following powers of $\m_A$:
\begin{center}\begin{minipage}[t]{0.4\textwidth}\begin{align*}
\m_A &=  (  m_1, m_2 ) , \\
\m_A^2 &=  (  m_1^2, m_1m_2, m_2^2 ) ,\\
\m_A^3 &=  (  m_1^2m_2,m_1m_2^2,m_2^3 ) ,\\
\m_A^4 &=  (  m_1^2m_2^2,m_1m_2^3,m_2^4 ) ,\\
\end{align*}\end{minipage}
\begin{minipage}[t]{0.4\textwidth}\begin{align*}
  \m_A^5 &=  (  m_1^2m_2^3,m_2^5 ) ,\\
\m_A^6 &=  (  m_2^6 ) ,\\
\m_A^7 &=  (  m_2^7 ) \\
\m_A^8&=0.\end{align*}
\end{minipage}\end{center}
So the first power of $\m_A$ which is zero is $\m_A^8$.
We can see that $m_1\in \Ann_A(\m_A^6)$, since $m_1m_2^6 = m_1^4m_2^2 = 0$, showing that $\Soc^6(A) = \Ann_A(\m_A^6)\neq \m_A^2 = \Rad^2(A)$.
Thus $A$ is not rigid.
\end{example}

\begin{remark}While  Example \ref{ex:finally} proves that tensor products of field extensions are not generally rigid, it can often happen ``by accident'', even when $E$ does not contain $K$. For example, if $K$ and $E$ are linearly disjoint over $k$, then $K\otimes_k E$ is a field; this happens for example if $E$ is separable and $K$ is purely inseparable. 
  
Or, suppose $K=k(f)$ is a purely inseparable simple extension. Then if we let $r$ be minimal such that $f^r\in E$, the maximal ideal $\m_A$ is the principal ideal generated by the element $x = 1\otimes f^r-f^r\otimes 1$, and rigidity follows easily.
 \end{remark}

The following example helps to motivate our chosen definition of rigidity and shows how base change of a field along a (non-normal) field extension can give rise to an algebra whose regular module has indecomposable modules of different Loewy lengths.
\begin{example}\label{rem:notbottom}
Suppose that $p=3$ and let $k = \F_3(t,u)$ be the field of rational functions in $t$ and $u$ over the finite field $\F_3$. 
Let $F = k(t^{1/6} + u^{1/3})$.
Then $F$ has degree $6$ over $k$, and is made up of a Galois extension $E/k$ of degree $2$, where $E = k(t^{1/2})$, together with a purely inseparable extension $F/E$ of degree $3$.
The field $E$ has another purely inseparable extension of degree $3$ which is abstractly isomorphic to $F$, namely $F^* = k(-t^{1/6}+u^{1/3})$.
Let $L = k(t^{1/6},u^{1/3})$ be the compositum of $F$ and $F^*$, so $L/k$ has degree $18$.
The nontrivial Galois automorphism $\gamma: t^{1/2}\mapsto -t^{1/2}$ of $E$ extends in an obvious way to an automorphism of $L$ which swaps the subfields $F$ and $F^*$; we denote this automorphism by $\gamma$ as well.

We claim that the regular module for $A:=F\otimes_k F$ has indecomposable summands of different Loewy lengths.
There are two maximal ideals $\m_1$ and $\m_2$ in $A$: we can realise $\m_1$ as the kernel of the map $x\otimes y\mapsto xy$ with image $F$, and $\m_2$ as the kernel of the map $x\otimes y\mapsto x\gamma(y)$ with image $L$.
Then 
$$
A\cong (F\otimes_E F) \oplus (F\otimes_E F^*).
$$
Both summands are rigid, however the first one has Loewy length $3$ and the second is isomorphic to the field $L$, so has Loewy length $1$.
 
Some further analysis in this example shows that the minimal field of definition of $\Jac(F_\bark)$ is the field $k':=k(t^{1/3},u^{1/3})$, the subfield of $\gamma$-fixed points in $L$. 
Certainly $\Jac(F_\bark)$ is $L$-defined, so the minimal field of definition of $\Jac(F_\bark)$ is contained in the maximal purely inseparable subextension of $L$, which is $k'$.
Now any proper subfield of $k'$ of degree $3$ over $k$ can be written as $k(a)$ with $a^3\in k\setminus k^3$.
Then since $E/k$ is separable of degree $2$ and $k(a)/k$ is purely inseparable, we have $E\otimes_k k(a) \cong E(a)$.
This allows us to write
$$
F\otimes_k k(a) \cong F\otimes_E (E\otimes_k k(a)) \cong F\otimes_E E(a).
$$
Since $F$ and $E(a)$ are primitive purely inseparable extensions of $E$ of degree $3$, either $E(a) = F$ or $E(a)$ is linearly disjoint from $F$ over $E$. 
In either case we see that $F\otimes_k k(a)$ is a local ring, and hence $\Jac(F_\bark)$ cannot be $k(a)$-defined (since $F_\bark$ is not local).
Hence $k'$ is the minimal field of definition for $\Jac(F_\bark)$, as claimed. 
Finally, note that we also showed in this case that $F_{k''}$ is rigid for all intermediate fields $k\subset k'' \subset k'$.
\end{example}

\section{Application of high weight theory}\label{partii}
Let $k$ be a field and $G$ a smooth connected affine $k$-group. The $k$-unipotent radical $\RR_{u,k}(G)$ is the maximal smooth connected normal unipotent $k$-subgroup of $G$, and $G$ is \emph{pseudo-reductive} if
$\RR_{u,k}(G) = 1$. In this part we look to deploy \cite{BS22} and \cite{CPClass} to give a description of $k_V$ in \cref{thm:mainone}. 
Whilst we occasionally (as we have just done) emphasise the field $k$ by writing ``$k$-subgroup of $G$'', 
our convention throughout is that ``subgroup of $G$'' means ``$k$-subgroup of $G$''. 

\subsection{Preliminaries on the representation theory of pseudo-reductive groups}
We recall that if $k$ is a field and $\U$ is a unipotent $k$-group then we have \cite[Exp. XVII, Prop.~3.2]{SGA3}:
\begin{prop}\label{unipsimp}Let $\U$ be any unipotent $k$-group. Then the only simple $\U$-module is the $1$-dimensional trivial module, $k$.\end{prop}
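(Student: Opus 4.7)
The plan is to reduce to the case where $V$ is finite-dimensional over an algebraically closed field, and then appeal to the classical Engel-type statement that a unipotent group acting on a non-zero finite-dimensional vector space always has a fixed vector. First, since $k$ is a field, $k[\U]$ is free, hence projective, as a $k$-module, so by the local-finiteness discussion in \cref{sec:Gmods} every element of the simple $\U$-module $V$ lies in a finitely generated (and hence finite-dimensional) $\U$-submodule; simplicity of $V$ then forces $V$ itself to be finite-dimensional.

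Next I would pass to the algebraic closure. Writing $\phi:V\to V\otimes k[\U]$ for the comodule structure map, the fixed-point subspace $V^\U$ is the kernel of the $k$-linear map $v\mapsto \phi(v)-v\otimes 1$, which commutes with flat base change. Thus $(V^\U)_\bark=(V_\bark)^{\U_\bark}$. If I can show the right-hand side is non-zero then $V^\U$ is a non-zero $\U$-submodule of the simple module $V$, so $V^\U=V$; the $\U$-action on $V$ is then trivial, and simplicity gives $\dim_k V=1$, which is exactly what we want. In particular, this reduces the problem to showing that every non-zero finite-dimensional representation of a unipotent $\bark$-group scheme admits a fixed vector.

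For this last step the plan is to induct on the length of a composition series of $\U_\bark$ whose successive quotients are $\bark$-forms of $\Ga$, $\alpha_p$, or an étale $p$-group (e.g.\ $\Z/p$). Given a normal subgroup $\N\triangleleft\U$ of one of these three atomic types, $V^\N$ is a $\U$-stable subspace and one concludes by applying the inductive hypothesis to the unipotent quotient $\U/\N$ acting on $V^\N$. Each base case reduces to the linear-algebra fact that an operator $x$ on a finite-dimensional vector space satisfying $x^p=0$ (for $\alpha_p$, and after suitable restriction for $\Ga$) or $(x-1)^p=0$ (for $\Z/p$ in characteristic $p$) has a non-zero kernel. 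The main obstacle is treating the infinitesimal factors $\alpha_p$ cleanly, since the usual topological arguments using the $\bark$-points $\U(\bark)$ only see the reduced part of $\U$ and ignore infinitesimal structure; the correct language is that of comodules, which is exactly how the cited result \cite[Exp.~XVII, Prop.~3.2]{SGA3} proceeds, and which one could reproduce in a few lines if a self-contained argument were preferred.
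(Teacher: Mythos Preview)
The paper does not supply its own proof; it merely recalls the result with a citation to \cite[Exp.~XVII, Prop.~3.2]{SGA3}. Your sketch therefore goes beyond what the paper provides, and it is essentially the standard SGA3-style argument you yourself invoke at the end: reduce to finite dimension via local finiteness, pass to $\bark$ using that taking $\U$-invariants (a kernel) commutes with flat base change, and then produce a fixed vector by climbing a subnormal series of $\U_{\bark}$.

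Two small points of precision. First, ``composition series'' is a misnomer in characteristic $p$: $\Ga$ has $\alpha_p$ as a proper normal subgroup with quotient again isomorphic to $\Ga$, so no finite composition series exists. What one actually uses is a finite \emph{subnormal} (even central) series whose successive quotients are subgroups of $\Ga$, which over $\bark$ one may refine to quotients of the three atomic types you list. Second, the parenthetical ``after suitable restriction for $\Ga$'' is the one place where an argument is still owed. A clean way to fill it: for finite-dimensional $V$ the coaction $V\to V\otimes\bark[t]$ lands in $V\otimes\bark[t]_{\le N}$ for some $N$, so $V^{\Ga}=V^{\alpha_{p^M}}$ whenever $p^M>N$, and $V^{\alpha_{p^M}}\ne 0$ follows by iterating your $\alpha_p$ step along the chain $\alpha_p\subset\alpha_{p^2}\subset\cdots\subset\alpha_{p^M}$. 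With these clarifications the argument is complete.
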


This implies that if $k$ is a field and $V$ is a simple $G$-module then any normal unipotent subgroup of $G$ acts trivially on $V$. In particular, when $G$ is smooth and connected, the $k$-unipotent radical $\RR_{u,k}(G)$ of $G$ acts trivially on $V$ and in studying $V$ it does no harm to replace $G$ with its maximal \emph{pseudo-reductive} quotient $G/\RR_{u,k}(G)$ and so in what follows:

\begin{center}\framebox{$G$ will always denote a pseudo-reductive algebraic $k$-group over a field $k$.}\end{center}

The next sections recap the main results of \cite{BS22} which describe simple modules for pseudo-reductive groups by means of a high weight theory. 

\subsubsection{Induction} 
In \cite{BS22} simple modules for algebraic $k$-groups are constructed by induction.
We refer the reader to \cite[I.3.3]{Jan03} for the definition, and here just record the key property of \emph{Frobenius reciprocity}  \cite[I.3.4(b)]{Jan03}
for later use. 
Suppose that $H$ is a subgroup of $G$. Then for a $G$-module $V$ and an $H$-module $U$
\begin{equation}\label{eq:frob}
\Hom_G(V,\Ind_H^G(U)) \cong \Hom_H(\Res^G_H(V),U),
\end{equation}
where $\Ind_H^G(U)$ is the induced module and $\Res^G_H(V)$ is the $H$-module obtained from $V$ by restriction.

\subsubsection{Pseudo-reductive groups and Levi subgroups}\label{sec:pseudo-prelims} 
 There is a smallest extension $k\subseteq k'\subseteq \bark$ for which $\RR_{u,k'}(G_{k'})_{\bark}=\RR_u(G_{\bark})$, and $k'$ is called the \emph{minimal field of definition} of $\RR_u(G_{\bark})$, where $\RR_u(G_{\bar{k}})$ is the (geometric) unipotent radical of $G$; see \cite[Def. 1.1.6]{CGP15}. By \cite[Prop.~1.1.9]{CGP15} we have that 
the extension $ k'/k$ is finite and purely inseparable.

A pseudo-reductive group $G$ is called \emph{pseudo-split} if it contains a split maximal torus $T$;
in this case, there is a \emph{Levi subgroup} $M$ of $G$ containing $T$.
That is, there is a split reductive subgroup $M$ of $G$ containing $T$ and such that $G_{\bar{k}} = M_{\bar{k}} \ltimes \RR_u(G_{\bar{k}})$; see \cite[Thm. 3.4.6]{CGP15} or \cite[Thm. 5.4.4]{CPSurv}.

\subsubsection{Weil restriction}\label{sec:Weil} 
A key source of pseudo-reductive groups are Weil restrictions $\R_{k'/k}(G')$, where $k'/k$ is an inseparable field extension and $G'$ is a reductive $k'$-group.
We refer the reader to \cite[\S A.5]{CGP15} for an introduction to the key properties of Weil restriction tailored towards the study of pseudo-reductive groups.
See also \cite{Bosch_1990} for more details.

\subsubsection{Simple modules for pseudo-split groups}\label{sec:BS}
We summarise the main results from \cite{BS22}.
Suppose that $G$ is a pseudo-split pseudo-reductive $k$-group, with maximal torus $T$ and Cartan subgroup $C = Z_G(T)$. 
There is a Levi $k$-subgroup $M$ of $G$ containing $T$, and having chosen a Borel subgroup of $M$ (which for technical reasons should correspond to the negative roots), we get a system of positive roots, from which we can define a set of dominant weights $X(T)_+$ for $T$.
For each $\lambda\in X(T)_+$ there is a corresponding simple $M$-module $L_M(\lambda)$.
The following is a portmanteau theorem from \cite[Thm.~1.2, Thm.~3.1]{BS22}, with part (iii) following from the discussion in \cite[Sec.~4]{BS22}.
Note that the simple modules $L_G(\lambda)$ below are necessarily finite-dimensional, by local finiteness, but the induced modules $Q_G(\lambda)$ will not be finite-dimensional in general.

\begin{theorem}\label{thm:fromBS}
Suppose that $\lambda\in X(T)_+$. Let $Q_G(\lambda):= \Ind_M^G(L_M(\lambda))$.
\begin{itemize}
\item[(i)] The socle of $Q_G(\lambda)$ is a simple $G$-module, denoted by $L_G(\lambda)$. 
\item[(ii)] The assignment $\lambda \to L_G(\lambda)$ gives a one-one correspondence between the dominant weights $X(T)_+$ and the simple $G$-modules.
\item[(iii)] The highest weight space $L_G(\lambda)_\lambda$ of $L_G(\lambda)$ is a $C$-module isomorphic to $L_C(\lambda)$.
\item[(iv)] The restriction $\Res^G_M(L_G(\lambda))$ is isotypic and semisimple, and hence 
$$
\dim(L_G(\lambda)) = \dim L_C(\lambda)\cdot \dim L_M(\lambda).\label{thm:fromBS:dim}
$$
\end{itemize}
\end{theorem}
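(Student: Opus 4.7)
My plan is to follow the classical highest-weight framework, adapted to the pseudo-reductive setting by replacing the role of the maximal torus with the Cartan subgroup $C$, and by reducing to the Levi $M$ via Frobenius reciprocity \eqref{eq:frob}. The central computation supporting everything else is the analysis of the $T$-weights of $Q_G(\lambda) = \Ind_M^G(L_M(\lambda))$. I would first use the tensor identity for induction together with the geometric splitting $G_{\bar k} = M_{\bar k} \ltimes \RR_u(G_{\bar k})$ to show that, as a $T$-module, $Q_G(\lambda)_{\bar k}$ identifies with $L_M(\lambda)_{\bar k} \otimes k[\RR_u(G_{\bar k})^-]^*$ (or the analogous algebraic incarnation). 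The positive-root structure on $M$ combined with the fact that roots of $T$ on $\RR_u(G_{\bar k})^-$ are negative combinations of simple roots then shows $\lambda$ is the top $T$-weight of $Q_G(\lambda)$, with $\lambda$-weight space stable under $C$. Because $C \cap M = Z_M(T) = T$ while $C_{\bar k}/T_{\bar k}$ sits inside the unipotent part of $\RR_u(G_{\bar k})$ that centralises $T$, a direct tracking of the construction should identify $Q_G(\lambda)_\lambda$ as a $C$-module with $L_C(\lambda)$. This establishes (iii) and positions everything else.

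With (iii) in hand, for (i) I would run a highest-weight argument: any nonzero $G$-submodule $V \subseteq Q_G(\lambda)$ must meet the top weight space, so $V \cap Q_G(\lambda)_\lambda$ is a nonzero $C$-submodule of $L_C(\lambda)$ and therefore equals $L_C(\lambda)$. Hence every simple $G$-submodule shares the same $\lambda$-weight space, forcing $\Soc_G Q_G(\lambda)$ to be simple; call it $L_G(\lambda)$. For (ii), injectivity of $\lambda \mapsto L_G(\lambda)$ follows from distinctness of highest weights. For surjectivity, given an arbitrary simple $G$-module $V$, base change to $k_s$ (applying \cref{lem:sepext} to stay in the semisimple world) and use density of $T_{k_s}(k_s)$ in $T_{k_s}$ to extract a nonzero $T$-weight vector of maximal weight $\lambda$; root-theoretic dominance on $M$ ensures $\lambda$ is dominant. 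The $M$-submodule it generates inside $V_{k_s}$ projects to a copy of $L_M(\lambda)$, and by Frobenius reciprocity \eqref{eq:frob} this yields a nonzero $G$-homomorphism $V_{k_s} \to Q_G(\lambda)_{k_s}$, whose image lies in the socle $L_G(\lambda)_{k_s}$; simplicity of $V$ then gives $V_{k_s} \cong L_G(\lambda)_{k_s}$, which descends via another application of \cref{lem:sepext}.

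Finally, for (iv), I would argue that $L_G(\lambda)$ is generated as an $M$-module by its top weight space $L_G(\lambda)_\lambda \cong L_C(\lambda)$. Each $0 \neq v \in L_G(\lambda)_\lambda$ generates an $M$-submodule which is a quotient of the universal highest-weight module and which, by simplicity considerations and the fact that $\lambda$ is also the highest weight of $L_M(\lambda)$, must be isomorphic to $L_M(\lambda)$. A Clifford-style argument (compare \cref{clifford} and the closely related constructions in \cref{sec:nonconnected}) then shows that distinct basis vectors of $L_C(\lambda)$ generate submodules which sum directly, giving $\Res^G_M L_G(\lambda) \cong L_M(\lambda)^{\oplus \dim L_C(\lambda)}$, whence the dimension formula. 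The hardest step, and the one I would expect to require the most care, is the identification $Q_G(\lambda)_\lambda \cong L_C(\lambda)$ as $C$-modules in step one: the pseudo-reductive Cartan $C$ is genuinely larger than $T$ and its action on the induced module mixes data from $L_M(\lambda)$ with the representation-theoretic contribution of the geometric unipotent radical, so a clean description of the $\lambda$-weight space requires a careful adaptation of the tensor identity and precise control over how $C$ sits inside $G$.
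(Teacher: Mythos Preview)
The paper does not contain a proof of this theorem: it is explicitly presented as a ``portmanteau theorem from \cite[Thm.~1.2, Thm.~3.1]{BS22}'', i.e.\ a summary of results proved elsewhere. So there is no proof in the present paper to compare your proposal against; what you have written is a plausible reconstruction of the arguments one would expect to find in \cite{BS22}.

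That said, a few points of your sketch would need tightening if this were to stand on its own. In part~(i), the assertion that every nonzero $G$-submodule of $Q_G(\lambda)$ meets the $\lambda$-weight space is not automatic: you need to argue (via the action of positive root groups, or equivalently of a pseudo-Borel) that the highest weight occurring in any submodule is again~$\lambda$. In part~(ii), since $T$ is already split over~$k$, the passage to $k_s$ is unnecessary and introduces confusion; moreover your Frobenius-reciprocity map runs in the wrong direction for the conclusion you want: a nonzero map $V\to Q_G(\lambda)$ has image in a submodule, not a priori in the socle, and you need $V$ simple (not merely $V_{k_s}$ semisimple) to force the image to be simple, hence contained in the simple socle. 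In part~(iv), the ``direct sum'' claim is exactly where the paper itself later invests effort (see the proof of \cref{prop:splitend}(iii)): one computes $\dim\End_G(L_G(\lambda))$ via Frobenius reciprocity against $Q_G(\lambda)$ and matches it to the multiplicity of $L_M(\lambda)$ in the restriction, rather than arguing directly that submodules generated by distinct weight vectors are independent.
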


\begin{example}\label{rem:notabs}
It is noted in \cite[Rem 4.7(i)]{BS22} that the modules $L_G(\lambda)$ are rarely absolutely semisimple.
The most basic example is as follows: let $k$ be an imperfect field of characteristic $p$ and $G = \R_{k'/k}(\Gm)$ be the Weil restriction of the multiplicative group
along a purely inseparable extension $k'/k$ of degree $p$. (Then $G$ is pseudo-split and pseudo-reductive---see \cite[1.1.3]{CGP15}.) 
For $(r,p)=1$, the simple module $L_G(r)$ can be realised as the Weil restriction $\R_{k'/k}(V)$, where $V\cong k'$ is a $1$-dimensional vector space on which $\Gm$ acts with weight $r$; 
in other words, if $g\in \Gm(A)=A^\times$, then $g\cdot v\mapsto g^rv$ for any $v\in V(A)\cong A$.  
To see that this module is indeed simple, note that $G(k)=\Gm(k')=(k')^\times$ and so $G(k)$ has one orbit on the non-zero elements of $L_G(r)$. 
However, $L_G(r)_{k'}$ is reducible and indecomposable as a module for $G_{k'}$. 
(To see this latter statement, if we write $k' = k(a)$ with $k$-basis $1,a,\ldots,a^{p-1}$, then one can realise the elements of $G$ in their action on $L_G(r)$ explicitly as 
$p\times p$ matrices. Over $k'$ these matrices are trigonalisable.)

When $r=1$, this describes (the Weil restriction of) the natural action of $\Gm$ on $\Ga$ coming from scalar multiplication in $k'$.
See \cite[Sec.~5.2]{BS22} for further discussion, and \cref{cor:sepclosedsimple} below for a contrasting result for separable base changes of simple modules.
\end{example}

\subsubsection{Simple modules and the map \texorpdfstring{$i_G$}{iG}}\label{sec:i_G}
Keep notation from the previous section, so $G$ is a pseudo-split pseudo-reductive group with maximal torus $T$, Cartan $C = Z_G(T)$, and Levi subgroup $M$ containing $T$.
Let $K$ be the minimal field of definition of the unipotent radical of $G$, set $U = \RR_{u,K}(G_{K})$, and let $\G:= \R_{K/k}(M_{K})$.
The quotient $\pi:G_{K} \to M_{K}$ induces a map $i_G:G \to \G$, see \cite[Eq. (1.3.1)]{CGP15}.
This map plays a crucial role in the structure theory of pseudo-reductive groups developed in \cite{CGP15}.
We note that $i_G$ often has trivial kernel, but not always: see \cite[Ex. 1.6.3, Ex. 5.3.7]{CGP15} for examples of this.

It is proved in \cite[Prop 7.1.3(ii)]{CPSurv} that $\ker i_G$ is unipotent (with no non-trivial smooth and connected subgroups) and the smooth connected image $i_G(G)$ is pseudo-reductive. When the intersection of $\ker i_G$ with a Cartan subgroup of $G$ is trivial, we say that $G$ is of \emph{minimal type}.

It is also shown in \cite[Thm. 1.6.2(2)]{CGP15} that when $H = \R_{K/k}(H')$ is the Weil restriction of a reductive $K$-group $H'$ along the purely inseparable extension $K/k$, then the map $i_H$ is an isomorphism.
From this, it is not hard to deduce that  the map $i_{i_G(G)}$ is nothing other than the inclusion $i_G(G)\hookrightarrow \G$. 

Together with \cref{unipsimp}, this implies:
\begin{lemma}\label{lem:i_G}
  The action of $G$ on $L_G(\lambda)$ factors through $G\to i_G(G)$. 
  Thus we may assume $\ker(i_G)=1$ and in particular $G$ is of minimal type.
\end{lemma}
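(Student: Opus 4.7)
The plan is to combine the cited fact that $\ker(i_G)$ is unipotent with \cref{unipsimp} to deduce that the kernel acts trivially on any simple module, and then invoke the observation about $i_{i_G(G)}$ to get the minimal-type conclusion.

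First I would recall from the paragraph above the lemma (citing \cite[Prop.~7.1.3(ii)]{CPSurv}) that $\ker(i_G)$ is a closed normal unipotent subgroup of $G$, and that $i_G(G)$ is smooth, connected and pseudo-reductive. Set $N:=\ker(i_G)$ and let $V:=L_G(\lambda)$. The key observation is that for \emph{any} closed normal subgroup $N$ of $G$ and any simple $G$-module $V$, the fixed-point subspace $V^N$ is a $G$-submodule of $V$: indeed $N$ being normal implies that $G$ permutes the $N$-fixed vectors. When $N$ is unipotent, \cref{unipsimp} tells us that the only simple $N$-module is the trivial one, so the socle of $V|_N$ is a nonzero sum of trivial modules and hence $V^N\neq 0$. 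Simplicity of $V$ then forces $V^N=V$, i.e. $N$ acts trivially. Applying this to $N=\ker(i_G)$, the action of $G$ on $L_G(\lambda)$ factors through the quotient $G\twoheadrightarrow G/\ker(i_G)\cong i_G(G)$.

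For the second part, one needs to justify that replacing $G$ with $i_G(G)\subseteq\G$ is harmless for the study of simple modules and that $i_G(G)$ is of minimal type. The first of these is immediate from the previous paragraph together with the fact that $i_G(G)$ is itself pseudo-reductive. For minimal type, I would invoke the observation made in the excerpt right before the lemma: using \cite[Thm.~1.6.2(2)]{CGP15}, one has $i_\G=\id_\G$, and from this one deduces that $i_{i_G(G)}$ coincides with the inclusion $i_G(G)\hookrightarrow\G$. In particular $\ker(i_{i_G(G)})=1$, so certainly $\ker(i_{i_G(G)})$ meets every Cartan subgroup trivially, which is exactly the definition of minimal type.

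There is no real obstacle here; the proof is essentially two short observations strung together. The one subtlety worth flagging explicitly is the argument that a normal unipotent subgroup acts trivially on any simple module — it is tempting to invoke Clifford theory (as in \cref{clifford}) but that gives only semisimplicity; the correct argument uses that the \emph{only} simple module for a unipotent group is the trivial one, so the restricted socle is itself trivial, together with normality to promote $V^N$ to a $G$-submodule. Once this is stated cleanly, the rest is bookkeeping with the previously quoted properties of $i_G$.
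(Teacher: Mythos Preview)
Your proposal is correct and matches the paper's approach exactly: the paper simply remarks that the lemma follows from the cited fact that $\ker i_G$ is unipotent together with \cref{unipsimp} (the observation that any normal unipotent subgroup acts trivially on a simple module having already been made right after that proposition), and then uses the identification of $i_{i_G(G)}$ with the inclusion to get minimal type. You have merely spelled out in detail what the paper leaves as a one-line implication.
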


We make use of this observation in \cref{prop:splitend} below, comparing the simple modules of $G$ and $\G$.

\subsection{Endomorphisms of simple modules: pseudo-split case}\label{sec:endsimple1} 
In this and the following sections, we show how to pin down the endomorphism algebra $D = \End_G(V)$ more precisely, which allows us to be quite explicit about the field $k_V$ in \cref{thm:mainone}. In this section $G$ is pseudo-split with  a split maximal torus $T$ that is contained in a Levi subgroup $M$ and $C=Z_G(T)$ is a Cartan subgroup. We let $K$ denote the minimal field of definition of the unipotent radical of $G$ and thence get the canonical map $i_G:G\to\G:=\R_{K/k}(M_K)$, which restricts to $i_C:C\to\C:=\R_{K/k}(T)$. Evidently $T$ is a Levi subgroup of $C$ and $\C$; as $M$ is of $G$ and $\G$. 

It is helpful to relate the simple modules of $G$ and $\G$, building on work in \cite{BS22}.  Recall the following from \cite[Sec.~1]{BS22}.
\begin{defn}\label{def:Klambda}
Given any $\alpha\in \Z$ we can form the subfield $K(\alpha)$ of $K$ generated by $k$ together with $(K)^{\alpha}$. 
As $K/k$ is purely inseparable, taking $\alpha = p^e\beta$ with $\beta$ coprime to $p$, we have $K(\alpha) = K(p^e)$.
Now given a $T$-weight $\lambda = (\lambda_1,\ldots,\lambda_r)\in\Z^r$, let $K(\lambda)$ denote the purely inseparable 
subfield of $K$ generated by $k$ and the $K(\lambda_i)$.
\end{defn}

As explained in \cite[Thm.~5.8]{BS22} and its proof, the simple module $L_{\C}(\lambda)$ can be identified with the field $K(\lambda)$, in such a way that the action of $\C$ on $L_\C(\lambda)$ factors through the natural action of $\R_{K(\lambda)/k}(\Gm)$ on $\R_{K(\lambda)/k}(\Ga)$ via a surjection $\C\to \R_{K(\lambda)/k}(\Gm)$.

\begin{prop}\label{prop:splitend}
Suppose that $\ker i_G=1$ and identify $G=i_G(G)$ as a subgroup of $\G$. For any $T$-weight $\lambda$ we have:
\begin{enumerate}
\item The simple module $L_C(\lambda)$ can be identified uniquely with an intermediate extension $k\subseteq K_C(\lambda) \subseteq K(\lambda)$, 
and the action of $C$ on $L_C(\lambda)$ factors through the natural action of $\R_{K_C(\lambda)/k}(\Gm)$ on $\R_{K_C(\lambda)/k}(\Ga)$ by scalar multiplication.
\item More generally,  we have an isomorphism of $G$-modules $L_G(\lambda)\cong \R_{K_C(\lambda)/k}(L_M(\lambda)_{K_C(\lambda)})$, with $G$
acting through the Weil restriction $\R_{K_C(\lambda)/k}(\GL(L_M(\lambda)_{K_C(\lambda)}))$.
Further, by restriction, we have
$$
\Res^{\G}_G(L_{\G}(\lambda)) \cong L_G(\lambda)\oplus\cdots\oplus L_G(\lambda),
$$
with $[K(\lambda):K_C(\lambda)]$ summands on the right-hand side.\vspace{5pt}
\item \hspace{0.3\textwidth}$\End_G(L_G(\lambda))\cong K_C(\lambda).$\label{prop:splitend:end}
\end{enumerate}
\end{prop}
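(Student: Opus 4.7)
The plan is to reduce the computation to the highest weight space $L_G(\lambda)_\lambda$, on which $C$ acts through a quotient we already understand from part (i). First I would define a $k$-algebra homomorphism
$$r\colon \End_G(L_G(\lambda)) \longrightarrow \End_C(L_G(\lambda)_\lambda)$$
by restriction: any $G$-endomorphism is in particular $T$-equivariant and so preserves the $\lambda$-weight space, while $C = Z_G(T)$ acts on this space. By Theorem~\ref{thm:fromBS}(iii) combined with part (i), the target identifies with $\End_C(K_C(\lambda))$. Since $C$ acts on $K_C(\lambda)$ through the natural action of $\R_{K_C(\lambda)/k}(\Gm)$ by scalar multiplication, a $C$-equivariant endomorphism commutes (functorially) with multiplication by every element of $K_C(\lambda)$, and is therefore itself a $K_C(\lambda)$-scalar. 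This gives $\End_C(L_C(\lambda)) = K_C(\lambda)$.

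Next, I would construct an embedding $K_C(\lambda) \hookrightarrow \End_G(L_G(\lambda))$ from the description $L_G(\lambda) \cong \R_{K_C(\lambda)/k}(L_M(\lambda)_{K_C(\lambda)})$ of part (ii). Scalar multiplication by $K_C(\lambda)$ on $L_M(\lambda)_{K_C(\lambda)}$ is $K_C(\lambda)$-linear, so it commutes with the $\GL(L_M(\lambda)_{K_C(\lambda)})$-action; applying $\R_{K_C(\lambda)/k}$ therefore exhibits $K_C(\lambda)$ inside the centraliser of the $G$-action on $L_G(\lambda)$. By construction these $G$-endomorphisms restrict to ordinary $K_C(\lambda)$-scalars on the highest weight space (since this weight space corresponds under the isomorphism in (ii) to $L_M(\lambda)_\lambda \otimes_k K_C(\lambda) = K_C(\lambda)$, the top weight space of $L_M(\lambda)_{K_C(\lambda)}$ being one-dimensional over $K_C(\lambda)$), so the composition
$$K_C(\lambda) \hookrightarrow \End_G(L_G(\lambda)) \xrightarrow{r} \End_C(L_C(\lambda)) = K_C(\lambda)$$
is the identity.

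To finish, I would invoke Schur's lemma (Lemma~\ref{lem:schur}): $\End_G(L_G(\lambda))$ is a division ring, so the nonzero $k$-algebra homomorphism $r$ is injective, and the identity factorisation above then forces the embedding $K_C(\lambda) \hookrightarrow \End_G(L_G(\lambda))$ to be an isomorphism. The main obstacle I anticipate is cleanly justifying the identification $\End_C(K_C(\lambda)) = K_C(\lambda)$: the claim that $C$-equivariance upgrades to $\R_{K_C(\lambda)/k}(\Gm)$-equivariance requires the image of $C\to \R_{K_C(\lambda)/k}(\Gm)$ to be ``large enough'' in the functor-of-points sense, which should be extractable from the minimality of $K_C(\lambda)$ as a field of definition for the $C$-action on $L_C(\lambda)$ established in part (i).
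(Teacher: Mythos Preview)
Your argument for (iii) is correct but takes a genuinely different route from the paper and inverts its logical order. The paper proves (iii) \emph{before} (ii): for injectivity of $r$ it observes that highest-weight vectors generate $L_G(\lambda)$ even under the Levi $M$, so a $G$-endomorphism is determined by its restriction to $L_G(\lambda)_\lambda\cong L_C(\lambda)$; for the reverse inequality it avoids (ii) entirely and instead uses Frobenius reciprocity
\[
\End_G(L_G(\lambda))\cong\Hom_G(L_G(\lambda),Q_G(\lambda))\cong\Hom_M(\Res^G_M L_G(\lambda),L_M(\lambda)),
\]
counting dimensions via \cref{thm:fromBS}(iv). Only afterwards does the paper establish the restriction statement in (ii), and that argument explicitly invokes (iii). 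Your route --- extracting $K_C(\lambda)$-scalars from the Weil-restriction description in (ii) and using Schur to force $r$ injective --- is clean, but if you then borrow the paper's proof of (ii) you close a circle; you would need to supply an independent argument for the isomorphism $L_G(\lambda)\cong\R_{K_C(\lambda)/k}(L_M(\lambda)_{K_C(\lambda)})$. The Frobenius-reciprocity computation buys independence from (ii); your approach buys a more concrete identification of where the endomorphisms come from.
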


\begin{proof}
Since $T$ acts on $L_\C(\lambda) \cong K(\lambda)$ with the single weight $\lambda$, the restriction to $C$ has composition factors all equal to the simple module $L_C(\lambda)$.
In particular, there is a $C$-submodule $K_C(\lambda)$ of $K(\lambda)$ isomorphic to $L_C(\lambda)$.
By the argument in the proof of \cite[Lem.~5.10]{BS22}, and the simplicity of $K_C(\lambda)$, we see that 
we may assume that $K_C(\lambda)$ is the subfield of $K(\lambda)$ generated as a $C$-module by $1\in K_\C(\lambda)$.
Furthermore, \emph{loc.~cit.} shows that $C$ acts on it through the Weil restriction $\R_{K_C(\lambda)/k}(\Gm)$.
This proves (i).

When $G$ is commutative, so $G=C$ and $M=T$, then (ii) is an easy application of (i) plus Clifford's theorem \cite[I.6.16]{Jan03}: since $K(\lambda)$ is simple as a $\C$-module and $C$ is normal in $\C$, the restriction of $K(\lambda)$ to $C$ is semisimple, and it must be a direct sum of copies of $L_C(\lambda)$ for weight reasons.
Then (iii) follows since the $C$-action commutes with the the multiplicative structure of $K_C(\lambda)$ and any endomorphism $\phi:K_C(\lambda)\to K_C(\lambda)$ is completely determined by the image of $1\in K_C(\lambda)$.

With this in hand, we may prove (ii) and (iii) for general $G$. 
First, there is an obvious action of $G$ on the module $\R_{K/k}(L_M(\lambda)_K)$ through $\R_{K/k}(\GL(L_M(\lambda)_K))$,
since $\G$ acts on this module. 
Since the restriction of this module to $M$ is semisimple, and we have the correct highest weight, 
$L_G(\lambda)$ must identify with a submodule of this module.
Now the identification of the high weight space $L_G(\lambda_\lambda$ with the subfield $K_C(\lambda)\subseteq K$ shows
that we may assume that the high weight space of this copy of $L_G(\lambda)$ sits inside the natural copy of 
$\R_{K(\lambda)/k}(L_M(\lambda)_{K(\lambda)}) \subseteq \R_{K/k}(L_M(\lambda)_K)$.
It follows from \cref{thm:fromBS}(iv) that the vectors in the high weight space $L_G(\lambda)_\lambda$ 
generate $L_G(\lambda)$ under the action of a Levi subgroup $M\subseteq G$ containing $T$, for dimension reasons.
Since $\R_{K(\lambda)/k}(L_M(\lambda)_{K(\lambda)})$ is $M$-stable and has the correct dimension, it
must be isomorphic to $L_G(\lambda)$, and the first parts of (ii) follow.

Now we prove (iii). 
Any $G$-module homomorphism $\phi:L_G(\lambda)\to L_G(\lambda)$ must stabilise the high weight space $L_G(\lambda)_\lambda$ and is completely determined by what happens to it, again since its vectors generate the whole of $L_G(\lambda)$.
Hence $\phi$ is determined by its restriction to $L_G(\lambda)_\lambda\cong L_C(\lambda)$, which by the above shows that we have an inclusion $\End_G(L_G(\lambda))\subseteq K_C(\lambda)$.

On the other hand, any $G$-homomorphism $L_G(\lambda) \to Q_G(\lambda)$ must land in the simple socle $L_G(\lambda)$, so we have $\End_G(L_G(\lambda))\cong \Hom_G(L_G(\lambda),Q_G(\lambda))$.
But now we can use Frobenius reciprocity \eqref{eq:frob}:
$$
\Hom_G(L_G(\lambda),Q_G(\lambda)) \cong \Hom_M(\Res^G_M(L_G(\lambda)),L_M(\lambda)).
$$
Since the restriction $\Res^G_M(L_G(\lambda))$ is isomorphic to a direct sum of copies of $L_M(\lambda)$, we can conclude that $\End_G(L_G(\lambda)) \cong k^t$, where $t$ is the number of summands (recall that $\End_M(L_M(\lambda)) \cong k$ by \cite[Prop.II.2.8]{Jan03}). 
But the dimension formula in \cref{thm:fromBS}(iv) says that 
$t = \dim_k L_C(\lambda) = \dim_k K_C(\lambda)$, so we are done.

Finally to complete the proof for (ii), let $M\subseteq G\subseteq \G$ be a Levi subgroup. Then we have
\begin{align*}\Res^\G_M(L_\G(\lambda))&\cong L_M(\lambda)^{\dim L_\C(\lambda)}&{\text{(Thm.~\ref{thm:fromBS}(iv))}}\\
&\cong L_M(\lambda)^{[K(\lambda):k]}.
\end{align*}

Since also $\Res^G_M(L_G(\mu))\cong L_M(\mu)^{[K_C(\mu):k]}$ by \cref{thm:fromBS}(iv) applied to $G$, it follows that all the $G$-composition factors of $\Res^\G_G(L_\G(\lambda))$ are isomorphic to $L_G(\lambda)$ and there are $$[K(\lambda):k]/[K_C(\lambda):k]=[K(\lambda):K_C(\lambda)]$$ of them. It remains to show that $\Res^\G_G(L_\G(\lambda))$ is semisimple.

For that we calculate the dimension of $\Hom_G(\Res^\G_G(L_\G(\lambda)),L_G(\lambda))$, which is equal to that of $\Hom_G(L_\G(\lambda),\Ind_G^\G(L_G(\lambda)))$ by Frobenius reciprocity \eqref{eq:frob}. Now $$Q_\G(\lambda)=\Ind_M^\G(L_M(\lambda))\cong \Ind_G^\G(\Ind_M^G(L_M(\lambda)))=\Ind_G^\G(Q_G(\lambda))$$ by transitivity of induction. 
As $Q_\G(\lambda)$ has a simple socle, we get
\[\End_\G(L_\G(\lambda))\cong \Hom_\G(L_\G(\lambda),Q_\G(\lambda))\cong \Hom_G(\Res^\G_G(L_\G(\lambda)),L_G(\lambda))\]
by Frobenius reciprocity again. 
Part (iii) of this lemma identifies the left-most term as having the structure of $K(\lambda)$ as a $k$-vector space. 
Meanwhile, the right-most term has $k$-dimension equal to $\dim\End_G(L_G(\lambda))=[K_C(\lambda):k]$ times the multiplicity of $L_G(\lambda)$ in the head of $\Res^\G_G(L_\G(\lambda))$
(recall the \emph{head} of a module is the quotient by the radical). Therefore that multiplicity must be $[K(\lambda):K_C(\lambda)]$ and we are done.
\end{proof}

\begin{remarks}
(i). It follows from the proof of the first part of (ii) above that $L_G(\lambda)$ is \emph{canonically} embedded in $L_\G(\lambda)$, since $L_C(\lambda)$ is canonically embedded in $L_\C(\lambda)$ due to the inclusion $K_C(\lambda)\subseteq K(\lambda)$. 
  
 (ii). It is perhaps worth noting that part (ii) of the above result in no way requires $\RR_u(G_\bark)$ to be defined over $K_C(\lambda)$, even though the image of $G$ in the representation might have smaller (or even trivial) geometric unipotent radical. 
 A very simple example of this phenomenon is given as follows: let $G = \R_{k'/k}(\Gm)$, where $\Char(k) = 2$ and $k'/k$ is purely inseparable of degree $2$.
 Then the module $L_G(2)\cong k$ of high weight $2$ is $1$-dimensional, and $G$ acts on it through the squaring map, which in this case is a homomorphism $G\to \Gm$. 
 The kernel of this homomorphism is the non-smooth subgroup $\R_{k'/k}(\mu_2)$, whose base change to $\bark$ does contain $\RR_u(G_\bark)$.
 \end{remarks}

By \cref{prop:splitend}, $D:=\End_G(L_G(\lambda))\cong K_C(\lambda)$ is also a finite purely inseparable extension of $k$, and so by \Cref{lem:mfod}(i) it is 
itself the minimal field of definition of $\Jac(D_\bark)$.
By applying \cref{thm:Drigid}, we therefore obtain:
\begin{cor}\label{cor:splitrigid}
  With terminology as above, $k_V:=K_C(\lambda)$ satisfies the conclusion of \cref{thm:mainone}.
\end{cor}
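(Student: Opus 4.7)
The plan is to verify the two clauses in the conclusion of \cref{thm:mainone} with $k_V := K_C(\lambda)$: that $K_C(\lambda)/k$ is a finite purely inseparable extension naturally attached to $V$, and that $V_{K_C(\lambda)}$ is absolutely rigid as a $G_{K_C(\lambda)}$-module. The first clause is essentially by construction: from \cref{def:Klambda}, $K_C(\lambda)$ sits inside $K(\lambda)\subseteq K$, and $K/k$ is finite and purely inseparable by \cite[Prop.~1.1.9]{CGP15}; naturality is exactly the description in \cref{prop:splitend}(i) of $K_C(\lambda)$ as the $C$-submodule of $K(\lambda)$ generated by $1$.

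For the rigidity clause, the strategy is to bootstrap the pseudo-split structure theory into the Morita machinery of Part \ref{parti}. By \cref{prop:splitend}\cref{prop:splitend:end}, the division algebra $D:=\End_G(V)$ is isomorphic to $K_C(\lambda)$. In particular $D$ is commutative, so its centre $Z:=Z(D)$ equals $D$, and since $D$ is a purely inseparable field extension of $k$, \cref{lem:mfod}(i) applied to $K_C(\lambda)/k$ shows that $K_C(\lambda)$ is already the minimal field of definition of $\Jac(Z_{\bark})$. From here, \cref{lem:mfod}(iv) yields rigidity of $Z_E$ as a module over itself for every extension $E/K_C(\lambda)$, and \cref{cor:GEDE} translates this into rigidity of $V_E$ as a $G_E$-module, giving absolute rigidity of $V_{K_C(\lambda)}$.

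I do not expect any serious obstacle: the corollary is a direct assembly of \cref{prop:splitend}, \cref{lem:mfod}, and \cref{cor:GEDE}, hinging on the observation that the commutativity of $D$ makes it coincide with its centre, collapsing the usual subtleties about minimal fields of definition for non-central Jacobson radicals. The only check worth flagging is that the standing hypotheses on $G$ required in \cref{sec:morita}---flatness, infinitesimal flatness, noetherianness, and integrality---are in force, but these are automatic because $G$ is a pseudo-reductive (and hence smooth connected affine) algebraic $k$-group over a field.
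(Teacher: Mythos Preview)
Your approach matches the paper's: identify $K_C(\lambda)$ with $\End_G(V)$ via \cref{prop:splitend}\cref{prop:splitend:end}, observe via \cref{lem:mfod}(i) that $K_C(\lambda)$ is its own minimal field of definition for the geometric Jacobson radical, and then feed this into the Part~\ref{parti} machinery. The paper simply declares the corollary ``immediate'' at this point, since \cref{thm:mainone} for connected $G$ has already been proved with $k_V$ equal to exactly this minimal field of definition.

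There is one small gap in your write-up. You invoke \cref{lem:mfod}(iv) to conclude that $Z_E$ is rigid for every $E/K_C(\lambda)$, but \cref{lem:mfod}(iv) is only a biconditional: it says rigidity over $K$ is equivalent to rigidity over some (equivalently, every) extension of $K$, not that rigidity holds. You still need one anchor point. The cleanest fix is to cite \cref{thm:Drigid} (or \cref{stp}) in place of \cref{lem:mfod}(iv): these establish that $Z_{k'}$ is absolutely rigid when $k'$ is the minimal field of definition of $\Jac(Z_{\bark})$, which here is $K_C(\lambda)$. Alternatively, note that $Z_{\bark}=K_C(\lambda)\otimes_k\bark$ is a single STP algebra (being local, since $K_C(\lambda)/k$ is purely inseparable) and hence rigid by \cref{rigidSTP}; then \cref{lem:mfod}(iv) propagates this down. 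Either way, once that is patched, \cref{cor:GEDE} finishes the job exactly as you say.
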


\begin{example}\label{ex:klambda}
Easy examples show that the fields $K_C(\lambda)$ and $K(\lambda)$ in \cref{prop:splitend} can be different.
Let $k = F(a,b)$ be the field of rational functions in two indeterminates over a field $F$ of characteristic $p$,
and let $K = k(a^{1/p},b^{1/p})$, a purely inseparable extension of $k$ of degree $p^2$.
Set $s := a^{1/p}$ and $t:=b^{1/p}$.
Let $C = \R_{k(s)/k}(\Gm)\times \R_{k(t)/k}(\Gm)$, a commutative pseudo-split pseudo-reductive group with maximal split torus $T=\Gm\times\Gm$.
Then $K$ is the minimal field of definition of $\RR_u(C_\bark)$, so the group $\C = \R_{K/k}(T_K) \cong \R_{K/k}(\Gm)\times \R_{K/k}(\Gm)$,
with the factors of $C$ sitting naturally inside the factors of $\C$.

Consider the three $T$-weights $(1,0)$, $(0,1)$ and $(1,1)$.
The corresponding modules for $\C$ are all isomorphic as $k$-vector spaces to the field $K$ itself: since there are no nontrivial powers of $p$ appearing, we have $K(\lambda) = K$ for each of the three choices of $\lambda$.
On the other hand, we have $K_C(1,0) = k(s)$, $K_C(0,1) = k(t)$ and $K_C(1,1) = k(s,t) = K$,
and so we see that the field $K_C(\lambda)$ does depend on the weight and on the group $C$.
\end{example}

\begin{remark}\label{rem:finally}
Recall Example \ref{ex:finally}, which gives an example of a field $k$ and purely inseparable extensions $K/k$ and $E/k$ such that $K\otimes_k E$ is not rigid.
By setting $G = \R_{K/k}(\Gm)$ and $V = L_G(1)$, we obtain an example of a pseudo-split pseudo-reductive group $G$, a simple $G$-module $V$ with $\End_G(V) = K$, 
and an extension $E/k$ such that $V_E$ is not rigid.
This shows that we cannot hope that simple $G$-modules are absolutely rigid, even when $G$ is pseudo-split. 
Similar examples can be constructed replacing $\Gm$ with other split reductive groups, 
and will occur whenever we have purely inseparable extensions whose tensor product is not rigid.
\end{remark}

It is a fact---\!\!\cite[II.2.9]{Jan03}---that simple $G$-modules for split reductive $G$ are all defined over the relevant prime fields. With knowledge of the endomorphism ring in hand, we can give the generalisation to pseudo-split pseudo-reductive groups. 
At the same time we observe that the simple $G$-modules are absolutely indecomposable.

\begin{cor}\label{cor:sepclosedsimple}
Let $G$ be pseudo-split and $\lambda\in X(T)_+$ with $K_C(\lambda)$ the field associated with $L_G(\lambda)_\lambda\cong L_C(\lambda)$. 
\begin{itemize}
\item[(i)] We have that $L_G(\lambda)_E$ is isotypic with simple socle and head---hence indecomposable---for every field extension $E/k$.
\item[(ii)] We have that $L_G(\lambda)_E = L_{G_{E}}(\lambda)$ for any field extension $E/k$ which is linearly disjoint from $K_C(\lambda)$---for example, for any separable extension $E/k$.
\end{itemize}
\end{cor}

\begin{proof} 
By the results of \Cref{sec:morita}, the submodule structure of $L_G(\lambda)_E$ is controlled by the ideal structure of $K_C(\lambda)\otimes_k E$.
Since $K_C(\lambda)/k$ is purely inseparable, this tensor product is a Gorenstein local ring which implies the statement
(see \cref{sec:tensor}).
If in addition $K_C(\lambda)$ and $E$ are linearly disjoint, then $K_C(\lambda) \otimes_k E$ is a field and it
follows that $L_G(\lambda)_{E}$ is simple.
\end{proof}

\subsection{Endomorphisms of simple modules: general case}\label{sec:endsimple2}
We drop the assumption that $G$ is pseudo-split. Then for $V$ a simple $G$-module, $D:=\End_G(V)$ will no longer be a field in general, let alone a purely inseparable extension of $k$. We  elucidate the structure of $D$ taking inspiration from \cite{Tit71}.
A classification of the possible isomorphism classes of $D$ that could occur would subsume many difficult open questions about the Brauer groups of fields and we do not try to tackle this here. Instead, we assume we know the action of the absolute Galois group on (the Dynkin diagram of) $G$ and $V$. Then we are able to describe $D$ through its base change to a suitable separable extension---after which $G$ becomes pseudo-split and we can deploy the highest weight theory discussed in the last section. 
In particular we can calculate the dimension of $D$ based on this data.

Let $T$ be a maximal torus of $G$ and let $E/k$ be a finite Galois extension such that $S:=T_E$ is split, and hence $G_E$ is pseudo-split.
By choosing a Borel subgroup of a Levi subgroup of $G_E$ containing $S$, we can fix a system of dominant weights $X(S)_+$ in the weight lattice for $S$.
Let $\Gamma = {\rm Gal}(E/k)$, and let $W$ be the Weyl group of $G_E$.

We recall two actions of $\Gamma$ on the weights of $S$, see also \cite[Sec. 3.1]{Tit71}. 
The first arises from base change: given $\gamma\in \Gamma$ we can form the base change along $\gamma$ of the torus $S$,
giving a torus ${^\gamma}S$ defined functorially
by the formula ${^\gamma}S(A) = S(A\otimes_\gamma E)$ for each $E$-algebra $A$.
Since $k$ is fixed by $\gamma$, the tori $S$ and ${^\gamma}S$ have the common $k$-descent $T$, and so they are naturally isomorphic---we can identify ${^\gamma}S$ with $S$.
Under this identification, when we base change a character $\lambda$ of $S$ along $\gamma$ we obtain a new character ${^\gamma}\lambda$ of $S$.
Typically this will not preserve the dominant weights, but note that for each $\gamma\in \Gamma$ there is a unique $w\in W$ such that $w({^\gamma}X(S)_+)\subseteq X(S)_+$, and for $\lambda\in X(S)$ we set
\begin{equation}\label{eq:dot}
\gamma\cdot\lambda:=w({^\gamma}\lambda).
\end{equation}
Note that this is an action, and it respects the partial order on weights, since the system of positive roots corresponding to the choice of Borel subgroup above must also be preserved.

Let $V$ be a simple $G$-module.
Then $\Gamma$ acts semilinearly on $V_E = V\otimes_k E$ via its action on $E$;
denote this action by $v\mapsto \gamma(v)$. 
An $E$-subspace of $V_E$ has a $k$-form if and only if it is $\Gamma$-stable.
Note also that if $v\in V_E$ is a vector of $T_E$-weight $\lambda$,
then $\gamma(v)$ has weight ${^\gamma}\lambda$.
Since $V$ is simple as a $G$-module, $V_E$ is semisimple as a $G_E$-module by \cref{lem:sepext}, and so $V_E$ is
the sum of certain simple $G_E$-modules.
Let $\Lambda = \{\lambda=\lambda_1,\ldots,\lambda_r\}\subset X(S)_+$ be the set of highest weights occurring.
The following proof is based on that in \cite[Sec. 7.6]{Tit71}.

\begin{lemma}\label{lem:Vlambda}
Keep the notation above.
Further, for each $\lambda\in \Lambda$, let $V\{\lambda\}$ denote the sum of the simple submodules of $V_E$ isomorphic to $L_{G_E}(\lambda)$. Then:
\begin{itemize}
\item[(i)] $\Lambda$ forms a single $\Gamma$-orbit in $X(S)_+$;
\item[(ii)] $V_E$ is the direct sum of the $V\{\lambda\}$;
\item[(iii)] for each $\lambda_i\in\Lambda$, the multiplicity of $L_{G_E}(\lambda_i)$ in $V_{E}$ is a fixed integer, $d$.
\end{itemize}
\end{lemma}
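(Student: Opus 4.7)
The plan is to exploit the semilinear $\Gamma$-action on $V_E$ together with its compatibility with the $G_E$-module structure. Semisimplicity of $V_E$ as a $G_E$-module is already guaranteed by \cref{lem:sepext}, so $V_E$ admits an isotypic decomposition. All three claims will fall out once we show that $\Gamma$ permutes the isotypic summands according to the dot-action of \eqref{eq:dot}, i.e.\ $\gamma(V\{\lambda\}) = V\{\gamma\cdot\lambda\}$.

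To establish this key permutation formula, I would first check that $\gamma$ sends the $\mu$-weight space of $V_E$ to the ${}^\gamma\mu$-weight space: this is a short calculation using semilinearity together with the equivariance $\gamma(g\cdot v) = {}^\gamma g\cdot \gamma(v)$ for $g\in G(E)$. The same equivariance, applied on $E$-points (and using density for smooth $G_E$) shows that $\gamma(V\{\lambda\})$ is a $G_E$-submodule of $V_E$, and by comparing the sets of weights that appear one obtains an inclusion $\gamma(V\{\lambda\})\subseteq V\{\gamma\cdot\lambda\}$; applying $\gamma^{-1}$ to the reverse direction gives equality.

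The three statements then drop out. Statement (ii) is the isotypic decomposition of $V_E$. For (i), the sum $\bigoplus_{\mu\in\Gamma\cdot\lambda}V\{\mu\}$ is both $G_E$-stable and $\Gamma$-stable, hence by Galois descent is the base change of a nonzero $G$-submodule of the simple $V$; simplicity forces this submodule to be all of $V$ and hence $\Lambda = \Gamma\cdot\lambda$. For (iii), the $k$-linear bijection $\gamma\colon V\{\lambda\}\to V\{\gamma\cdot\lambda\}$ equates the $k$-dimensions, and hence the $E$-dimensions, of the two isotypic components; combined with $\dim_E L_{G_E}(\lambda) = \dim_E L_{G_E}(\gamma\cdot\lambda)$ (the underlying $k$-vector spaces are identified by $\gamma$), the multiplicities $d_\lambda$ are constant along the single orbit $\Lambda$.

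The main technical obstacle I expect is the careful bookkeeping around the two $\Gamma$-actions on $X(S)$: the raw base-change action $\mu\mapsto{}^\gamma\mu$, which need not preserve dominance, and the dot-action $\mu\mapsto\gamma\cdot\mu$, which does. One has to verify that the Weyl-group correction baked into \eqref{eq:dot} is exactly what is needed to read off the highest weight of $\gamma(V\{\lambda\})$ with respect to the \emph{fixed} Borel. Once that is handled, together with the mild subtlety that $\gamma(V\{\lambda\})$ really is $E$-stable (not just $k$-stable, which follows since $e\cdot\gamma(v) = \gamma(\gamma^{-1}(e)v)$), the rest of the argument is a short dimension count.
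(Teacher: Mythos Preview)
Your proposal is correct and follows essentially the same route as the paper: both arguments hinge on showing that the semilinear $\Gamma$-action permutes isotypic components via $\gamma(V\{\lambda\})=V\{\gamma\cdot\lambda\}$, then deduce (i) from Galois descent applied to the $\Gamma$-stable sum over an orbit, (ii) from semisimplicity, and (iii) from the bijection on summands. The only cosmetic difference is that the paper first tracks a single simple summand $U\cong L_{G_E}(\lambda)$ under $\gamma$ and then sums the translates, whereas you work directly with the isotypic blocks; the weight-and-Weyl-group bookkeeping you flag as the main obstacle is exactly what the paper handles as well.
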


\begin{proof}
Let $U$ be a simple $G_E$-submodule of $V$ isomorphic to $L_{G_E}(\lambda)$ and let $\gamma\in \Gamma$. The subspace $\gamma(U)$ is still a simple submodule, and any weight has the form ${^\gamma}\mu$ for a weight $\mu$ occurring in $U$.
Since the set of weights of $\gamma(U)$ is stable under the action of the Weyl group $W$, in fact any weight of $\gamma(U)$ has the form $\gamma\cdot\mu$ for
$\mu$ occurring in $U$. 
We have observed above that the ordering of weights is preserved by the map $\mu\mapsto \gamma\cdot\mu$,
so we can conclude that the module $\gamma(U)$ is isomorphic to $L_{G_E}(\gamma\cdot\lambda)$.

Now let $X$ be the (non-trivial) submodule of $V_E$ generated by the $\gamma(U)$ for $\gamma\in \Gamma$.
Then $X$ is $\Gamma$-stable, and hence has a $k$-form, which corresponds to a non-trivial $G$-submodule of $V$.
Since $V$ is simple, we conclude that $X = V_E$.
This proves (i), and (ii) follows because $V_E$ is semisimple.
For (iii), the above considerations imply that $\gamma(V\{\lambda\}) = V\{\gamma\cdot\lambda\}$ for each $\lambda\in \Lambda$ and $\gamma\in \Gamma$.
Hence the multiplicity of $L_{G_E}(\lambda)$ as a summand of $V\{\lambda\}$ must equal the multiplicity of $L_{G_E}(\gamma\cdot\lambda)$ as a summand of $V\{\gamma\cdot\lambda\}$.
\end{proof}

We can now describe the minimal field of definition of $\Jac(D_\bark)$ which serves the role of the field $k_V$ in \Cref{thm:mainone}.
According to the previous lemma and \Cref{cor:sepclosedsimple}(i), when we further extend to $k_s$ we get a decomposition 
\begin{equation}\label{ksdecomp}
V_{k_s} = \bigoplus_{\lambda\in \Lambda} L_{G_{k_s}}(\lambda)^d.
\end{equation}
For each $\lambda\in \Lambda$, denote by $K_{C,s}(\lambda)$ the purely inseparable extension of $k_s$ that identifies with the high weight space of $L_{G_{k_s}}(\lambda)$.
The compositum $K$ of the $K_{C,s}(\lambda)$ inside $\bar{k}$ is stable under the absolute Galois group, since the action of the Galois group permutes the summands, and hence permutes their high weight spaces.
This means that $K/k_s$ descends to a purely inseparable extension $k'/k$. 

\begin{theorem}
  With the above notation, $k'$ is the minimal field of definition of $\Rad_{G_\bark}(V_\bark)$ as a module. We may identify $k'$ with the minimal field of definition of $\Jac(D_\bark)$. It follows that we may take $k_V=k'$ in \cref{thm:mainone}.
  \end{theorem}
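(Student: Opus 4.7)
The plan is to reduce everything to a computation over $k_s$, where the highest-weight machinery of \cref{sec:endsimple1} applies, and then descend through the purely inseparable closure using the minimal-field-of-definition technology developed in \cref{sec:prelims}. First I would apply \cref{eq:tensorhom} to identify $D_{k_s}\cong\End_{G_{k_s}}(V_{k_s})$. By \cref{lem:Vlambda} together with \cref{cor:sepclosedsimple}(i), the $G_{k_s}$-module $V_{k_s}$ is semisimple and isotypic on each of the $\Gamma$-conjugate pieces $V\{\lambda\}$, giving the decomposition \eqref{ksdecomp}. Applying \cref{prop:splitend}\ref{prop:splitend:end} to each simple summand $L_{G_{k_s}}(\lambda)$ then yields
\[
D_{k_s}\;\cong\;\prod_{\lambda\in\Lambda}\Mat_d(K_{C,s}(\lambda)),
\]
since $\End_{G_{k_s}}(L_{G_{k_s}}(\lambda))\cong K_{C,s}(\lambda)$.

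Next I would base-change to $\bark$: the Jacobson radical commutes with the matrix construction, so
\[
\Jac(D_\bark)\;\cong\;\prod_{\lambda\in\Lambda}\Mat_d\bigl(\Jac(K_{C,s}(\lambda)\otimes_{k_s}\bark)\bigr).
\]
Since each $K_{C,s}(\lambda)/k_s$ is purely inseparable, \cref{lem:mfod}(i) says that $K_{C,s}(\lambda)$ is itself the minimal field of definition of $\Jac(K_{C,s}(\lambda)_\bark)$. The same reasoning as in \cref{lem:mfod}(ii) then shows that the minimal field of definition of $\Jac(D_\bark)$ over $k_s$ is precisely the compositum $K$ of the $K_{C,s}(\lambda)$. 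Galois descent along $\Gamma=\Gal(k_s/k)$ (as in \cref{mfodsep} and the proof of \cref{lem:mfod}(ii)) then produces the unique purely inseparable $k$-form $k'$ of $K$, and $k'$ is the minimal field of definition of $\Jac(D_\bark)$ over $k$. This is exactly the field $k'$ constructed before the statement, so the second claim of the theorem is established.

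For the identification with $\Rad_{G_\bark}(V_\bark)$, I would invoke the Morita equivalences of \cref{sec:morita}: by \cref{prop:REDE} and \cref{cor:DEZE}, the lattice of $G_\bark$-submodules of $V_\bark$ matches the lattice of ideals of $Z(D)_\bark$, and in particular the radical $\Rad_{G_\bark}(V_\bark)$ corresponds to $\Jac(Z(D)_\bark)$. Since $D$ is a division algebra by Schur (\cref{lem:schur}), the minimal fields of definition of $\Jac(D_\bark)$ and $\Jac(Z(D)_\bark)$ coincide (compare \cref{lem:mfod}(ii) and the discussion after \cref{thm:Drigid}). Hence $k'$ is also the minimal field of definition of $\Rad_{G_\bark}(V_\bark)$.

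Finally, to conclude that $k_V:=k'$ satisfies \cref{thm:mainone}, I would apply \cref{thm:Drigid} to the simple algebra $R=D$: it produces absolute rigidity of $D_{k'}$ as a module over itself, which through \cref{cor:GEDE} transfers to absolute rigidity of $V_{k'}$ as a $G_{k'}$-module. The main technical point to get right is the compatibility of the Morita chain $V_E\leftrightarrow D_E\leftrightarrow Z(D)_E$ with base change across non-separable $E/k$; this is why the argument routes through $k_s$ first and then descends, and why the purely inseparable nature of $K/k_s$ is essential for applying \cref{lem:mfod}(i) summand-by-summand.
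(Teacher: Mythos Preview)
Your proposal is correct and follows essentially the same route as the paper: compute $D_{k_s}$ via \eqref{ksdecomp} and \cref{prop:splitend}, identify the minimal field of definition of $\Jac(D_\bark)$ over $k_s$ as the compositum $K$ using \cref{lem:mfod}(i), descend to $k'$, transfer to $\Rad_{G_\bark}(V_\bark)$ via the Morita correspondence of \cref{sec:morita}, and conclude with \cref{thm:Drigid}. The only cosmetic difference is that the paper reverses the order (Morita first, then the $k_s$-decomposition) and phrases the Morita step as the equivalence ``$D_E/\Jac(D_E)$ absolutely semisimple $\iff$ $V_E/\Rad_{G_E}(V_E)$ absolutely semisimple'' for each $E$, which makes explicit why the minimal fields of definition agree; your lattice-correspondence phrasing is equally valid once one notes (as you implicitly do) that \cref{prop:REDE} and \cref{cor:DEZE} are stated for arbitrary $E$ and hence the correspondence is compatible with base change.
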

  \begin{proof}
    Let $\ks$ be the minimal field of definition of $\Jac(D_\bark)$. From the Morita equivalence in \cref{prop:REDE}: for every field extension $E/k$ we have that $D_E/\Jac(D_E)$ is absolutely semisimple if and only if $\Soc_{G_E}(V_{E})$ is absolutely semsimple; and since duality preserves the simple modules this happens if and only if $V_E/\Rad_{G_E}(V_E)$ is absolutely semisimple. Therefore $\ks$ identifies with the minimal field of definition of $\Rad_{G_\bark}(V_\bark)$.

Now \eqref{ksdecomp} implies
  \begin{equation}
    D_{k_s} \cong \bigoplus_{i=1}^r M_d(K_i).  \label{eq:ksdecomp}
  \end{equation}
  Since $K_i/k_s$ is purely inseparable, the minimal field of definition over $k_s$  of $\Jac(M_d(K_i)_\bark)$ is $K_i$ itself (\Cref{lem:mfod}(i)), and hence the minimal 
  field of definition of $\Jac(D_\bark)$ over $k_s$ is the compositum of these fields; that is, this minimal field of definition is $K\cong k'\otimes_k k_s$. Hence $k'\otimes_k k_s =\ks\otimes_k k_s$ as subfields of $\bark$ and so $k'=\ks$ as required. 
  
  For the last sentence, we apply \cref{thm:Drigid}.
  \end{proof}

Recall Definition \ref{defn:psplitting} of a $p$-splitting field for a $k$-algebra, and Remark \ref{rem:noncan}, which observes that there is no hope in general of such a field being unique.
In the light of this, the next theorem observes how restrictive the demand is for a division ring $D$ to be the endomorphism algebra $\End_G(V)$ for $V$ a simple $G$-module. In other words, the division algebras arising through \cref{lem:schur} come from a particularly special class.

\begin{theorem}\label{thm:torussplit}
  Let $D:=\End_G(V)$ for $V$ a simple $G$-module. Then there is a unique $p$-splitting field $E/k$ for $D$, and $E/k$ is Galois. This field identifies as the splitting field of a maximal torus $T$ in the image $\barG$ of $G$ in $\GL(V)$
\end{theorem}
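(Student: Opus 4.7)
The plan is to identify a natural candidate for $E$, namely the splitting field over $k$ of a maximal torus $T$ of $\barG$, and then show that $E$ coincides with the unique minimal $p$-splitting field of $D$. First, via \cref{lem:i_G} we may replace $G$ by $\barG$ and assume that $V$ is faithful; then $T$ is a genuine maximal torus of $\barG$, and its splitting field $E/k$ is automatically finite Galois.

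For the existence half, over $E$ the torus $T_E$ is split, so $\barG_E$ is pseudo-split pseudo-reductive. Since $E/k$ is separable, \cref{lem:sepext} gives that $V_E$ is semisimple as a $\barG_E$-module, and applying the high-weight theory (\cref{thm:fromBS} together with \cref{prop:splitend}) yields a decomposition
$$
V_E \cong \bigoplus_{\lambda \in \Lambda_E} L_{\barG_E}(\lambda)^{d},
$$
with each endomorphism algebra $\End_{\barG_E}(L_{\barG_E}(\lambda))$ identified with a field $K_{C_E}(\lambda)$ purely inseparable over $E$. Hence $D_E \cong \prod_\lambda M_d(K_{C_E}(\lambda))$ is $p$-split, exhibiting $E$ as a $p$-splitting field.

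For the minimality half, let $E'/k$ be any $p$-splitting field of $D$ and write $D_{E'} \cong \prod_j M_{n_j}(F_j)$ with each $F_j/E'$ purely inseparable. The Morita equivalences of \cref{sec:morita} force $V_{E'}$ to be semisimple with pairwise non-isomorphic simple summands $W_j$ satisfying $\End_{\barG_{E'}}(W_j) = F_j$. Comparing with the $k_s$-decomposition of \cref{lem:Vlambda} and using that each $F_j \otimes_{E'} (k_s E')$ remains a field (purely inseparable tensored with separable over $E'$), each $W_j \otimes_{E'} (k_sE')$ must be isotypic, with a single $\Gal(k_sE'/E')$-fixed highest weight. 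The key step is then to promote this $\Gal$-invariance of the highest weights to invariance of the full character lattice $X^*(T_{\bark})$, which uses the compatibility of Galois with the root system of $\barG$ together with the faithfulness of $V$ on $T$ so that the weights occurring in $V$ span a finite-index sublattice of $X^*(T_{\bark})$. This yields $T$ split over $E'$, hence $E \subseteq E'$; combined with the existence step, $E$ is thus the unique minimal $p$-splitting field, and it is Galois over $k$ by construction.

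The hard part will be that final step of the minimality argument: leveraging the block decomposition of $D_{E'}$ together with the combinatorics of the root datum of $\barG$ to force the Galois action on the entire character lattice of $T$ to be trivial over any $p$-splitting field, rather than merely on the set of highest weights supplied by the Wedderburn factors of $D_{E'}$.
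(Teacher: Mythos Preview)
Your proposal follows the same two-step architecture as the paper's proof: first establish that the splitting field $E$ of a maximal torus $T$ of $\barG$ renders $D_E$ $p$-split (via the pseudo-split highest-weight machinery of \cref{lem:Vlambda} and \cref{prop:splitend}), and then argue that any field $E'$ over which $D$ becomes $p$-split must already split $T$. The paper handles the converse in a single terse sentence---after decomposing $V_{E'}$ into absolutely simple summands $V_i$, it asserts that each ``descends, giving a high-weight module on which $T$ acts completely reducibly'' and concludes $T_{E'}$ is split---whereas you correctly isolate this promotion as the substantive step and attempt to flesh it out.

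A caution about your plan for that step, though: what you obtain from the $\Gal((E')_s/E')$-stability of each $(W_j)_{(E')_s}$ is that the highest weights $\lambda_j$ are fixed under the \emph{dot} action of \cref{eq:dot}, i.e.\ ${}^\gamma\lambda_j = w_\gamma^{-1}\lambda_j$ for some Weyl element $w_\gamma$ depending on $\gamma$. This is strictly weaker than the untwisted invariance ${}^\gamma\lambda_j = \lambda_j$, which is what you would need to feed into your spanning argument. Faithfulness does ensure that the full set of $T$-weights of $V$ spans a finite-index sublattice of $X^*(T)$, but that weight set is a priori only stable \emph{as a set} under the untwisted Galois action (and under $W$), not pointwise fixed. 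Your appeal to ``compatibility of Galois with the root system'' gestures at the right ingredients but does not yet explain how to pass from dot-invariance of the highest weights to triviality of the untwisted action on the whole lattice; you should expect genuine additional work here, and the paper's own argument is no more explicit on this point.
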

\begin{proof} 
The existence of the unique splitting field $E/k$ for the torus $T$, and its property of being Galois is a consequence of the discussion in \cite[\S8.12]{Bor91}.\footnote{See \href{https://mathoverflow.net/questions/142801/splitting-field-of-a-torus}{MathOverflow 142801} for an in-depth analysis of this point.}
  
  Now $\barG_E$ is pseudo-split and pseudo-reductive, so the endomorphism algebra $D_E:=\End_{G_E}(V_E)$ is $p$-split by \cref{lem:Vlambda} and \cref{prop:splitend}\cref{prop:splitend:end}. For the converse, we assume $E$ is such that $D_E$ is $p$-split. Then for some $d$ and $r$ we have $D_E=\Mat_d(E_1)\times\dots\times \Mat_d(E_r)$ where the $E_i$ are purely inseparable over $E$. Thus $D_E$ has $r$ simple right modules, which remain simple after separable extension. By Morita equivalence, the same is true of the $\overline{G}_E$-module $V_E$: say $V_E\cong V_1^d\oplus \dots\oplus V_r^d$ with each $V_i$ being $k_s$-simple. Now $V_i$ identifies with some $L_{G_{k_s}}(\lambda)$ over $k_s$, and it descends to $E$, giving a high-weight module $L_{\overline{G}_E}(\lambda)$ on which $T_E$ acts completely reducibly. Iterating over the composition factors of $V_E$ we see that the image of $T_E$ in $\overline{G}_E$ is split, as required.  
\end{proof}

\begin{example}
  Let $k:=\F_p(t)$, $E:=\F_{p^2}(t)$, $k':=\F_p(t^{1/p})$, $F:=\F_{p^2}(t^{1/p})=E\otimes_k k'$. Then $\Gal(E/k)= \langle  \gamma \rangle =\Gal(F/k')$, say. Now for $n\geq 2$, denote by $\G$ the (reductive) $k$-group $\R_{E/k}(\SL_n)$ and take $G'$ to be the subgroup scheme of $\G$ given by $G'(A):=\{x\in \G(A)\mid x^\intercal\gamma(x)=1\}$ for any commutative $k$-algebra $A$. 

  The matrices $G'(k)$ describe the non-split reductive $k$-subgroup $\SU_n$;\footnote{In fact $G'$ is \emph{quasi-split} as it contains the descent of a Borel subgroup of $\GG$.} this means that the Galois group attached to $E/k$ induces a non-trivial involution of the Dynkin diagram of ${G'}_E$---see \cite[\S24.f]{Milne17}. Let $G:=\R_{k'/k}({G'}_{k'})$. We have that $G$ is pseudo-reductive and has a canonical $k$-subgroup $G'$ which is evidently a Levi subgroup for $G$. It is well known, and easy to calculate that the action of the non-trivial element $\gamma\in \Gal(E/k)\cong C_2$ on $X(T)_+$ is given by $\gamma: \lambda=(\lambda_1,\dots,\lambda_{n-1})\mapsto \gamma\cdot\lambda=(\lambda_{n-1},\dots,\lambda_1)$. 
  In particular if $\lambda=\varpi_1$ is the so-called first fundamental dominant weight---the one for which $L_{G_E}(\varpi_1)$ is the obvious $pn$-dimensional representation of the pseudo-split $G_{E}$---then there is a simple $G$-module $V=L_G\{\varpi_1,\gamma\cdot\varpi_1\}$ of dimension $2pn$ over $k$ such that $V_E\cong L_{G_E}(\varpi_1)\oplus L_{G_E}(\gamma\cdot\varpi_1)$. So $\End_{G_E}(V_E)\cong F\oplus F$ and is commutative. Since $F$ has an $E/k$-form $k'$, we see that the minimal field of definition of   $\Jac_{\bark}(D_\bark)$ is $k'$. Since the unique non-trivial field extension of $k$ contained in $k'$ is $k'$ itself, this shows that $Z(D)$---hence also $D$ and $V$---are all absolutely rigid.
\end{example}

\subsection{Applying the Conrad--Prasad classification}\label{lastsection}

We finish by applying the classification results of \cite{CPClass} to the problem of identifying $K_C(\lambda)$.
Since the formation of $K_C(\lambda)$ commutes with separable extension (Remark \ref{mfodsep}), we will assume for the whole section that $k=k_s$.

We describe in Section A.2 the so-called \emph{generalised standard construction} of a pseudo-reductive group, which applies to all groups $G$ locally of minimal type. 
In brief, this construction presents $G$ as a quotient 
\[
G\cong (\mathscr{G}\rtimes \Cs)/\C.
\]
Here $\mathscr{G} = \mathscr{D}(R_{k'/k}(G'))$ is the derived group of a $k'$-group $G'$, with $k'/k$ a non-zero finite reduced $k$-algebra and each fibre $G_i'$ of $G'$ a pseudo-reductive group from a prescribed list of types labelled (i), (ii) or (iii).
The other ingredients are  a commutative pseudo-reductive group $\Cs$ acting on $\mathscr{G}$ such that the Cartan subgroup $\C$ of $\mathscr{G}$ can be anti-diagonally embedded as a central subgroup of $\mathscr{G}\rtimes \Cs$; it is this central subgroup which is quotiented out, allowing for $\Cs$ to ``replace'' $\C$ and appear as the Cartan subgroup of $G$.

As observed in \cref{sec:i_G}, we may assume that $\ker i_G = 1$, so the Structure Theorem of A.2 applies and $G$ is presented as above.
Furthermore, the central quotient $\mathscr{G}\rtimes \Cs\to G$ does not change the root system, so $g$, $G'$ and $\mathscr{G}$ all have the same root system; this root system is reduced because $i_G$ embeds $G$ into a Weil restriction of a Levi subgroup and so $G'$ has no fibres of type (iii).
For fibres of types (i) and (ii), the Lemma in Appendix A.2 and the associated \cref{tabledata} describe the structure of a Cartan subgroup in terms of the root system  and certain linear algebraic data.
With these results in hand, we will specify $K_C(\lambda)$ just in terms of that data, together with the field $K_{\Cs}(\lambda_\Cs)$, where $\lambda_\Cs$ is the restriction of $\lambda$ to a maximal split torus $\Ts$ of $\Cs$---in other words, we have a complete understanding modulo the commutative case. 

Fix $G$ presented as above, and let us arrange that the isomorphism of a maximal torus $T'$ of $G'$ with some $(\Gm)^n\cong (\Gm)^{n_1}\times\dots\times (\Gm)^{n_r}$ is lined up with the descriptions in the table, so that $G_i'\cap T'$ is the factor $(\Gm)^{n_i}$ and its centraliser in $G_i'$ is the given Cartan subgroup.

Fix $i$ and let $\Delta=\Delta_>\sqcup\Delta_<$ be a base for the root system of $G_i'$. Let $e$, $e_>$ and $e_<$ denote the largest exponents of $p$ dividing $ ( \lambda,a ) $ for every $a\in \Delta$, $\Delta_>$ and $\Delta_<$ respectively. Then with reference to \cref{tabledata} we define

\begin{equation}K_i:=\begin{cases}
  k(k_i')^{p^{e}}\text{ if $G_i'$ is as in case (i)};\\
  k(K)^{p^{e}}\text{ if $G_i'$ is as in case (ii)(a)};\\
  k(K)^{p^{e_<}}\text{ if $G_i'$ is as in case (ii)(b);}\\
  \text{the compositum }k(K_>)^{p^{e_>}}(K)^{p^{e_<}}\text{ if $G_i'$ is as in cases (ii)(c) or (d)}.
\end{cases}\label{theki}\end{equation}

We come to the main theorem of this section.
\begin{theorem}\label{fieldfromclass}
  Let $\KK$ denote the compositum of all the $K_i$ together with $K_\Cs(\lambda_\Cs)$. Then 
\begin{equation}K_C(\lambda)= \mathcal{K}\label{theequation}\end{equation} 
\end{theorem}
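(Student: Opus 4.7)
My plan is first to reduce to the case $k = k_s$: by \cref{mfodsep}, the formation of the minimal field of definition of $\Jac(\End_G(L_G(\lambda))_{\bar k})$ commutes with separable base change, and the fields $K_i$ from \eqref{theki} and $K_{\Cs}(\lambda_{\Cs})$ behave the same way. After this reduction $G$ is pseudo-split of minimal type and the generalised standard presentation \eqref{gisaquotient} applies. The Cartan subgroup $C$ of $G$ is then the image of the Cartan $\C \rtimes \Cs$ under the surjection $\mathscr{G} \rtimes \Cs \twoheadrightarrow G$, with kernel the anti-diagonal copy of $\C$. Pulling $L_C(\lambda)$ back along the smooth surjection $\C \times \Cs \twoheadrightarrow C$ gives a simple $(\C \times \Cs)$-module, and applying \cref{redtofactors} together with the identification of endomorphism algebras as fields from \cref{prop:splitend}\ref{prop:splitend:end} yields
\[
K_C(\lambda) \;=\; K_{\C}(\lambda_\C) \cdot K_{\Cs}(\lambda_{\Cs}),
\]
where $\lambda_\C$ and $\lambda_{\Cs}$ denote the restrictions of $\lambda$ to split maximal tori of $\C$ and $\Cs$. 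Since $K_{\Cs}(\lambda_{\Cs})$ is one of the pieces of $\mathcal{K}$ by definition, it remains to identify $K_{\C}(\lambda_\C)$ with the compositum of the $K_i$.

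Decomposing $\C = \prod_i \C_i$ according to the fibration of the primitive pair over the factor fields of $k'$, a further application of \cref{redtofactors} reduces the problem to showing, for each $i$, that $K_{\C_i}(\lambda_i) = K_i$, where $\lambda_i$ is the restriction of $\lambda_\C$ to a split maximal torus of $\C_i$. Each $\C_i$ is described explicitly in Table \ref{tabledata} as a product of factors of the form $a^{\vee}(\R_{L/k}(\Gm))$ or $(\R_{L/k}(a^{\vee}))((V)^{*}_{L/k})$, for various simple roots $a$ and fields $L \in \{k_i', K, K_>\}$. Pulling $\lambda_i$ back along such a cocharacter $a^{\vee}$ produces a character of $\R_{L/k}(\Gm)$ whose underlying $\Gm$-weight is $\langle \lambda, a \rangle$; and by the description of simple modules for Weil-restricted tori recalled just before \cref{prop:splitend}, the attached field is exactly $k(L)^{p^{v_p(\langle \lambda, a \rangle)}}$. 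Taking the compositum over all factors of $\C_i$ and grouping by long vs.\ short roots then produces precisely the expressions in \eqref{theki}: cases (i) and (ii)(a) are immediate, and cases (ii)(b)--(d) assemble by separating the contributions indexed by $\Delta_>$ and $\Delta_<$.

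The main obstacle lies in the subgroups $(V)^{*}_{L/k}$ appearing in cases (ii)(b)--(d): these are proper subgroups of $\R_{L/k}(\Gm)$ corresponding to the non-standard vector-group root groups, so a priori they might yield a strictly smaller attached field than the full Weil restriction would. The key input is an analogue, in this more general pseudo-reductive setting, of \cite[Lem.~5.10]{BS22}: any $(V)^{*}_{L/k}$-stable line in a weight space is already stable under the multiplicative group of the subfield $k_i'\langle V \rangle \subseteq L$. Since by construction the $k'$-span of $V$ (resp.\ $V_>$, $V_<$) coincides with $K$ in (ii)(b), with $K_>$ in (ii)(c)--(d), and with $K$ in (ii)(d) respectively, this reduces the contribution of the $(V)^{*}_{L/k}$-factor to what one would get from $\R_{k_i'\langle V \rangle/k}(\Gm)$, matching the formulas in \eqref{theki}. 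Verifying this descent lemma and carefully combining the long-root and short-root contributions in the mixed cases (ii)(c) and (ii)(d)---in particular confirming that one only sees $k(K_>)^{p^{e_>}} \cdot k(K)^{p^{e_<}}$ rather than additional smaller subfields---is where the bulk of the remaining work lies.
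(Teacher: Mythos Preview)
Your plan follows the same route as the paper: lift to $\GG=\G\rtimes\Cs$, identify $K_C(\lambda)$ with $K_{\CCC}(\widehat\lambda)$ for the Cartan $\CCC=\C\times\Cs$, apply \cref{redtofactors} to split off $K_{\Cs}(\lambda_{\Cs})$ and then the factors $\C_i$, and finally read off each $K_i$ from \cref{tabledata} together with \cite[Thm.~5.8]{BS22}. You have correctly located the only nontrivial point, namely the factors of the form $(V)^*_{L/k}$.

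Two remarks on that point. First, your phrasing of the needed lemma in terms of ``$(V)^*_{L/k}$-stable lines'' is not quite right: the simple modules of $\R_{L/k}(\Gm)$ are not one-dimensional, so there are no nontrivial stable lines to speak of. What you actually need (and what the paper proves as \cref{theustaractsirred}) is that the restriction of the simple module $L_{\R_{L/k}(\Gm)}(\mu)$ to the subgroup $\mathscr{U}=(V)^*_{L/k}$ remains simple whenever $V$ generates $L$ as a $k$-algebra; equivalently, $\mathscr{U}$ and $\R_{L/k}(\Gm)$ have the same simple modules. Second, this lemma is shorter to prove than you suggest: one may scale so that $1\in V$, and then the ratios of a generating set for $V$ are $k$-points of $\mathscr{U}$ which already generate $L$ as a $k$-algebra; acting on $1\in L_{\R_{L/k}(\Gm)}(\mu)\cong kL^{p^s}$ (with $p^s\Vert\mu$) they sweep out the whole field. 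With this in hand, the ``careful combination'' of long- and short-root contributions in cases (ii)(b)--(d) is immediate from \cite[Thm.~5.8]{BS22} and \cref{redtofactors}, exactly as you outline.
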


Let $K/k$ be purely inseparable, $U$ a $k$-subspace of $K$ such that $U$ generates $K$ as a subfield, and put $\mathscr{U}:=U^*_{K/k}\subseteq \R_{K/k}(\Gm)=:\mathscr{K}$. The proof of the theorem above will require an understanding of the representations of groups $\mathscr{U}:=U^*_{K/k}\subseteq \R_{K/k}(\Gm)=:\mathscr{K}$. Such groups $\mathscr{U}$ are rather mysterious: for example, their dimensions seem to be impossible to predict easily---see \cite[9.1.8--9.1.10]{CGP15}. Nonetheless, the following lemma explains that their representation theory is easy. Being pseudo-reductive of rank $1$, the simple modules of $\mathscr{K}$ and $\mathscr{U}$ up to isomorphism are denoted $L_\mathscr{K}(\lambda)$ and $L_\mathscr{U}(\lambda)$ by \cref{thm:fromBS}; here, $\lambda$ indicates the weight of a maximal split torus. It follows that $\Res^{\mathscr{K}}_\mathscr{U}(L_\mathscr{K}(\lambda))$ is an isotypic direct sum of copies of $L_\mathscr{U}(\lambda)$. In fact:
\begin{lemma}\label{theustaractsirred}
   The restriction $\Res^{\mathscr{K}}_\mathscr{U}(L_\mathscr{K}(\lambda))$ is irreducible.
\end{lemma}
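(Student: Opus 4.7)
The plan is to leverage the identification $L_{\mathscr{K}}(\lambda)\cong K(\lambda)$ from \cite[Thm.~5.8]{BS22}, whereby $\mathscr{K}$ acts on $K(\lambda)$ by scalar multiplication through the canonical surjection $\phi\colon\mathscr{K}\twoheadrightarrow\R_{K(\lambda)/k}(\Gm)$. Writing $\lambda=p^e\mu$ with $\gcd(\mu,p)=1$, this surjection is induced on points by $x\mapsto x^{p^e\mu}$, so the image $\bar{\mathscr{U}}:=\phi(\mathscr{U})$ is the Zariski closure in $\R_{K(\lambda)/k}(\Gm)$ of the ratios $(u/u')^{p^e\mu}$ for $u,u'\in U$ nonzero. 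The first step is to identify this with $(\widetilde U)^*_{K(\lambda)/k}$, where $\widetilde U\subseteq K(\lambda)$ denotes the $k$-span of $\{u^{p^e\mu}:u\in U\}$.

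Since the restriction $\Res^{\mathscr{K}}_{\mathscr{U}}(L_{\mathscr{K}}(\lambda))$ is already known to be isotypic of $L_{\mathscr{U}}(\lambda)$, irreducibility reduces by dimension count to the equality $K_{\mathscr{U}}(\lambda)=K(\lambda)$, where $K_\mathscr{U}(\lambda)$ is the field attached to the simple module $L_\mathscr{U}(\lambda)$ via Proposition~\ref{prop:splitend}(i). That proposition—itself an invocation of \cite[Lem.~5.10]{BS22}—identifies $K_\mathscr{U}(\lambda)$ as the subfield of $K(\lambda)$ generated by the $\bar{\mathscr{U}}$-orbit of the highest weight vector $1$, and shows that the $\mathscr{U}$-action on this cyclic submodule factors through $\R_{K_\mathscr{U}(\lambda)/k}(\Gm)$. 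The task, therefore, is to show that this subfield is all of $K(\lambda)$.

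The key step is a Frobenius calculation. Since $U$ generates $K$ as a subfield over $k$ and $K/k$ is finite-dimensional, the integral domain $k[U]$ equals $K$; applying $(\cdot)^{p^e}$ termwise (valid in characteristic $p$ since $(\sum a_iu_i)^{p^e}=\sum a_i^{p^e}u_i^{p^e}$) gives $k^{p^e}[U^{p^e}]=K^{p^e}$, and multiplying by $k$ yields $k[\widetilde U]\supseteq k\cdot K^{p^e}=K(\lambda)$. The $\mu$-power factor is harmless because $\gcd(\mu,p)=1$ makes $x\mapsto x^\mu$ an \'etale isogeny on $\R_{K(\lambda)/k}(\Gm)$, preserving the generated subfield. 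Combining with the multiplicative closure of the $\bar{\mathscr{U}}$-orbit and the field structure of $K(\lambda)$ then yields $K_{\mathscr{U}}(\lambda)=K(\lambda)$, giving the claimed irreducibility.

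The main obstacle lies in the final passage from ``$\widetilde U$ generates $K(\lambda)$ as a subfield'' to ``the subfield generated by the ratios $(u/u')^{p^e\mu}$ exhausts $K(\lambda)$''. Naively the $\bar{\mathscr{U}}$-orbit of $1$ yields only scalars of the form $u_1^{p^e\mu}/u_2^{p^e\mu}$, and one must invoke multiplicative closure together with the field structure of $K(\lambda)$ to recover all elements of $\widetilde U$, hence all of $K(\lambda)$. A secondary subtlety is verifying that the scheme-theoretic Zariski closure defining $\bar{\mathscr{U}}$ is compatible with the surjection $\phi$ and indeed coincides with $(\widetilde U)^*_{K(\lambda)/k}$; this is a matter of confirming that $\phi$ is flat or at least that its image of a Zariski-closed subscheme is Zariski-closed in the target, an assertion that follows from basic properties of Weil restriction.
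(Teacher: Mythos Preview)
Your approach mirrors the paper's: identify $L_{\mathscr{K}}(\lambda)$ with the field $K(\lambda)$ via \cite[Thm.~5.8]{BS22}, factor the action through $x\mapsto x^{\lambda}$, and argue that the $\mathscr{U}$-orbit of the highest-weight vector $1$ generates all of $K(\lambda)$. You also correctly isolate the one genuine difficulty.

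The gap is exactly where you flag it: the passage from ``$\widetilde U$ generates $K(\lambda)$ as a field'' to ``the subfield generated by the ratios $(u/u')^{\lambda}$ exhausts $K(\lambda)$'' is not established by a vague appeal to ``multiplicative closure together with the field structure''. The paper dispatches this with a one-line normalisation your outline is missing: since $U^*_{K/k}=(cU)^*_{K/k}$ for any $c\in K^\times$ (scaling $U$ leaves the set of ratios unchanged), one may replace $U$ by $u_0^{-1}U$ for a fixed nonzero $u_0\in U$ and thereby assume $1\in U$. Once $1\in U$, every $u\in U$ is itself a ratio $u=u/1$ and hence a $k$-point of $\mathscr{U}$; acting on $1\in K(\lambda)$ by such a point yields $u^{\lambda}$ directly, and then your Frobenius calculation (that the $u^{\lambda}$ generate $K(\lambda)$) finishes the argument. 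With this normalisation there is no need to identify $\phi(\mathscr{U})$ scheme-theoretically with $(\widetilde U)^*_{K(\lambda)/k}$ or to worry about flatness of $\phi$: the paper works entirely with $k$-points of $\mathscr{U}$ and the $k$-subalgebra of $K(\lambda)$ they stabilise, so your ``secondary subtlety'' simply does not arise.
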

\begin{proof}
  If $\lambda=0$ then $L_\mathscr{K}(\lambda)\cong k$ is the trivial module and the result is clear. So suppose $\lambda\neq 0$. 
  
From \cite[Thm.~5.8]{BS22}, the action of $\mathscr{K}$ on $L_\mathscr{K}(\lambda)$ factors through $\mathscr{K}\to \mathscr{K}; x\mapsto x^a$ followed by the $p^s$-power map $\mathscr{K}\to \mathscr{K}^{p^s}\cong \R_{K^{p^s}/k}(\Gm)$ followed by an action of $\mathscr{K}^{p^s}$ on its natural module $L_{\mathscr{K}^{p^s}}(1)$---and the latter identifies with the field $kK^{p^s}$.

Let $\{u_1,\dots,u_d\}\in U\setminus\{0\}$ be a set of generators for $K$ as a $k$-algebra. Scaling as in \cite[3.1.4, Proof]{CPClass}, it does no harm to assume $u_1=1$, so that the ratios $u_i/u_j$---which are all $k$-points of $\mathscr{U}$---also contain a set of generators of $K$ as a $k$-algebra. The minimal $k$-subalgebra of $kK^{p^s}$ containing $1$ and stable under the group generated by the ratios $u_i^\lambda$ is the same as the subalgebra stable under $\mathscr{K}^{p^s}$; this is the whole of $K^{p^s}$ as required.
\end{proof}

\begin{proof}[Proof of \cref{fieldfromclass}]
Recall that we are working over $k=k_s$, and our assumption is that $G$ is of the form \cref{gisaquotient} and $V$ is a simple module for $G$. Set $D:=\End_G(V)$. We want to show that the minimal field of definition of $\Jac(D_\bark)$ is the compositum $\mathcal{K}$ of the fields referenced by the theorem. 
First observe that $V$ lifts to a simple module for the pseudo-split pseudo-reductive  semidirect product $\GG:=\G\rtimes\Cs$, through the quotient map $\G\rtimes\Cs\to G\cong(\G\rtimes\Cs)/\C$, by letting the central antidiagonal $\C$ in $\GG$ act trivially. We work with the Cartan subgroup $\CCC:=\C\times\Cs$, which is the centraliser of the maximal split torus $\TT:=T'\times \Ts$ where $T'$ is the canonical maximal split torus in $\R_{k'/k}(T')$. (Of course the product is direct since $\CCC$ is commutative.) This surjects onto $C$ with kernel $\C$, where $\TT$ maps onto $T$. 

Now for any $\lambda\in X(T)$ we get a corresponding lift $\widehat{\lambda}\in X(\TT)$, and so we get an isomorphism $V\cong L_{\G}(\widehat\lambda)$. Evidently $\End_G(V)=\End_{\G\rtimes\Cs}(V)$ and so these algebras equally identify with both $K_C(\lambda)$ and $K_\CCC(\lambda)$. Hence we need only show $K_\CCC(\lambda)\cong\KK$. Since $K_\CCC(\lambda)$ is by definition identified with the high weight space of $L_\GG(\widehat{\lambda})$ it suffices to show that this is the compositum $\KK$ as claimed. 

From \cref{prop:splitend} (for example) one sees that $V|_{G_i}$ is isotypic and semisimple; indeed it is a direct sum of copies of $L_G(\widehat{\lambda_i})$, where $\lambda_i:=\widehat{\lambda}|_{T_i}$. The  endomorphism algebra over $G_i$ is therefore the field $L_{C_i}(\lambda_i)$ and we wish to see that this identifies with the field $K_i$ in \cref{theki}, which we now do case-by-case. 

In case (i) $C_i=\R_{k_i'/k}(T_i')$ and the statement that $L_{C_i}(\lambda_i)\cong K_i$ is \cite[Thm.~5.8]{BS22}. The same result also deals with case (ii)(a).  Then we have $p=2$. We treat case (ii)(d), the others being similar. 

By \cref{theustaractsirred} the Cartan subgroup $(V_>)^*_{K_>/k}\times (V_<)^*_{K/k}$ of $G_i$ has the same irreducible representations as $\R_{K_>/k}(\Gm)\times \R_{K/k}(\Gm)$, whence we can appeal again to \cite[Thm.~5.8]{BS22}. Now apply \cref{directproduct} (inductively) to the product $\GG=\prod G_i\rtimes\Cs$ to see that $K_C(\lambda)\cong\End_\GG(V)$ is the compositum $\KK$ of the fields $K_i$ together with $K_\Cs(\lambda_\Cs)$ as required.
\end{proof}

The Conrad--Prasad structure theorem also gives a refinement to our dimension formula in \cref{thm:fromBS}(iv). 
As in the proof of \cref{fieldfromclass}, we may lift the action of a pseudo-split $G$ on $L_G(\lambda)$ to that of $\GG=\G\rtimes \Cs$, where we have accordingly a decomposition $\TT=T'\times \Ts$ of a split maximal torus of $\GG$. Let $M\supseteq \TT$ denote a split Levi subgroup of $\GG$. Then $M\cap \Cs=\Ts$ and let $M_i:=M\cap G_i$ with $T_i:=T\cap G_i$ a corresponding maximal split torus; lastly, set $\lambda_i$ (resp.~$\lambda_\Ts$) the restriction of $\lambda$ to $T_i$ (resp.~$\Ts$). Since the $M_i$ are absolutely simple and simply connected, we have $M\cong M_1\times\dots\times M_r\times \Ts$, and $\End_{M_i}(L_{M_i}(\lambda_i))=k$ is a trivial $M$-module. Using \cref{directproduct} and \cref{redtofactors} it follows that $L_M(\lambda)\cong L_{M_1}(\lambda_1)\otimes\dots\otimes L_{M_r}(\lambda_r)\otimes L_{\Ts}(\lambda_{\Ts})$. Since  $L_{\Ts}(\lambda_{\Ts})$ is just a $1$-dimensional weight module $k_{\lambda_\Ts}$ with $t\cdot x=\lambda(t)x$ for $t\in \Ts(k)$, it follows that $\dim L_M(\lambda)=\prod_{i=1}^r \ell_i$ where $\ell_i=\dim L_{M_i}(\lambda_i)$. (All of this is  well-known.)
  \begin{cor}\label{cor:dimforsimples}
 We have \begin{equation}L_\GG(\lambda)|_M\cong \left(L_{M_1}(\lambda_1)\otimes\dots\otimes L_{M_r}(\lambda_r)\otimes L_{\Ts}(\lambda_\Ts)\right)^{\oplus \dim \mathcal{K}}.\label{tensorproddec}\end{equation}
Hence $\dim L_\GG(\lambda)=\prod \ell_i\cdot [\mathcal{K}:k]$, where $\mathcal{K}$ is the compositum in \cref{fieldfromclass}.

If $G$ is not necessarily pseudo-split, we have $\dim V=\dim_{k_s} L_{G_{k_s}}(\lambda)\cdot d\cdot |\Lambda|$, where $\Lambda$ is the orbit of $\Gal(k_s/k)$ on $\lambda$, some composition factor $L_{G_{k_s}}(\lambda)$ of $V_{k_s}$ occurs with multiplicity $d>0$, and where $\dim_{k_s}L_{G_{k_s}}(\lambda)$ can be deduced from the previous formula.

Furthermore, $\End_\GG(L_\GG(\lambda))\cong \mathcal{K}$, so that $\dim\End_\GG(L_\GG(\lambda))=[\mathcal{K}:k]$. In the non-pseudo split case, with notation of \cref{eq:ksdecomp}, we have $\dim \End_G(V)= d^2\cdot |\Lambda|\cdot [K_1:k_s]$.
\end{cor}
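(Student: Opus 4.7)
The plan is to assemble the corollary from three already-established ingredients: the dimension formula of \cref{thm:fromBS}(iv), the endomorphism-algebra computation in \cref{prop:splitend}(iii), and the identification $K_C(\lambda)=\mathcal{K}$ of \cref{fieldfromclass}. Nothing new is really needed; one just has to track data across the generalised standard construction.

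I would begin with the pseudo-split case. Apply \cref{thm:fromBS} to $\GG=\G\rtimes\Cs$ with Cartan $\CCC=\C\times\Cs$ and split Levi $M=M_1\times\cdots\times M_r\times\Ts$: the restriction $L_\GG(\lambda)|_M$ is isotypic and semisimple, with simple summand $L_M(\lambda)$ appearing with multiplicity
\[
\frac{\dim L_\GG(\lambda)}{\dim L_M(\lambda)}=\dim_k L_\CCC(\lambda)=[K_\CCC(\lambda):k],
\]
the last equality being \cref{prop:splitend}(i) applied to the commutative group $\CCC$. The proof of \cref{fieldfromclass} already records $K_C(\lambda)=K_\CCC(\lambda)=\mathcal{K}$, so the multiplicity is $[\mathcal{K}:k]$. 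Inserting the tensor-product factorisation $L_M(\lambda)\cong L_{M_1}(\lambda_1)\otimes\cdots\otimes L_{M_r}(\lambda_r)\otimes L_\Ts(\lambda_\Ts)$ recalled immediately before the corollary yields the decomposition \eqref{tensorproddec}, and taking $k$-dimensions gives $\dim L_\GG(\lambda)=\prod_i\ell_i\cdot[\mathcal{K}:k]$.

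For the non-pseudo-split case I invoke the base change to $k_s$ recorded at \eqref{ksdecomp}: by \cref{lem:Vlambda}, $V_{k_s}\cong\bigoplus_{\mu\in\Lambda}L_{G_{k_s}}(\mu)^d$, with all summands of common $k_s$-dimension by Galois conjugacy. Hence $\dim_k V=\dim_{k_s}V_{k_s}=|\Lambda|\cdot d\cdot\dim_{k_s}L_{G_{k_s}}(\lambda)$, and the last factor is now computed by the pseudo-split formula applied to $G_{k_s}$. For the endomorphism algebra, the pseudo-split statement $\End_\GG(L_\GG(\lambda))\cong\mathcal{K}$ is immediate from \cref{prop:splitend}(iii) applied to $(\GG,\CCC)$ combined with \cref{fieldfromclass}. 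For general $G$ I use \eqref{eq:ksdecomp}: $D_{k_s}\cong\bigoplus_{i=1}^{|\Lambda|}M_d(K_i)$, and since the $\lambda_i$ form a single Galois orbit by \cref{lem:Vlambda}(i), their attached fields $K_i$ are pairwise Galois-conjugate, so $[K_i:k_s]$ is independent of $i$. Summing gives $\dim_k D=\dim_{k_s}D_{k_s}=|\Lambda|\cdot d^2\cdot[K_1:k_s]$.

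There is no genuine obstacle: the corollary repackages prior material, and the one subtle bookkeeping step (already performed in the proof of \cref{fieldfromclass}) is the identification $K_C(\lambda)=K_\CCC(\lambda)$ obtained by lifting the simple $G$-module $V$ through the quotient $\GG\to G\cong\GG/\C$, which is what allows $\mathcal{K}$ to appear as the multiplicity in the pseudo-split decomposition.
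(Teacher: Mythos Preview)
Your proposal is correct and follows essentially the same approach as the paper: invoke \cref{thm:fromBS}(iv) to get the isotypic restriction with multiplicity $\dim L_\CCC(\lambda)$, then identify this with $[\mathcal{K}:k]$ via \cref{fieldfromclass}, and read off the dimension and endomorphism statements. In fact you give considerably more detail than the paper, which dispatches the first statement in two sentences and declares the rest ``immediate''; your explicit treatment of the non-pseudo-split case via \eqref{ksdecomp}, \eqref{eq:ksdecomp}, and the Galois conjugacy of the $K_i$ is a welcome expansion of that one word.
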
 
\begin{proof}
For the first statement, note the description of $L_M(\lambda)$ has already been established. Applying the formula from \cref{thm:fromBS}(iv) tells us that  $\dim L_\GG(\lambda)=\dim L_M(\lambda)\cdot \dim L_C(\lambda)$; but $L_C(\lambda)\cong K_C(\lambda)\cong \KK$ by \cref{fieldfromclass}.

The second statement is immediate.
\end{proof}

\section*{Appendix: a very brief guide to the Conrad--Prasad classification}

The classification of pseudo-reductive groups in \cite{CGP15}, yet slightly incomplete, is an intricate theorem. At the request of the referee we offer a \textit{de minimis} description from our own modest understanding. We wish for it to provide the scaffolding necessary to understand \cref{lastsection} 
of our paper; we hope it is useful in sparking the reader's interest sufficiently for them to mount an assault on the comprehensive account in \cite{CPSurv}---or even the source material as found in \cite{CPClass,CGP15}. 

\subsection*{A.1 Standard groups}
Let $G$ be a \emph{pseudo-reductive} group over $k$, that is to say it is a smooth connected affine $k$-group scheme with $k$-unipotent radical $\RR_{u,k}(G)=1$. A \emph{pseudo-simple} group is a smooth connected affine $k$-group defined by having no non-trivial smooth connected normal subgroup; this is of course a special type of pseudo-reductive group. Analogously with reductive groups, one wishes to proceed by reducing the pseudo-reductive case to the pseudo-simple and classifying the latter.

The headline result \cite[Thm.~1.5.1]{CGP15} is a first pass at this which works as least as long as $\Char k=p>3$; it is called the \textit{the standard construction} \cite[\S1.3--1.5]{CGP15}. Since the standard construction is a necessary concept also in the full classification as given in \cite{CPClass}, we record the platonic ideal first. For elegance among other things, one takes $k'$ to be a non-zero finite reduced $k$-algebra; so there is an isomorphism with a product of fields $k'\cong k_1'\times \dots \times k_r'$, with each $k_i'/k$ a finite field extension. 
Then a (reductive) $k'$-group $G'$ is isomorphic to a product $G'\cong G'_1\times\dots\times G'_r$, where each $G'_i$ is a (reductive) $k_i'$-group and $G'_i$ is the fibre of $G'$ over $k_i'$; we have corresponding direct product decompositions for the centre $Z':=Z_{G'}$, for a maximal torus $T'$, and for the Weil restriction $\R_{k'/k}(G')$.
Assuming that $G'$ is reductive, the Weil restriction $\R_{k'/k}(G')$ is always pseudo-reductive and has Cartan subgroup $\R_{k'/k}(T')$ \cite[Prop.~1.1.10, Prop.~A.5.15(3)]{CGP15}. Since $T'$ and $T'/Z'$ are both reductive, it follows that $\R_{k'/k}(T')$ and $\R_{k'/k}(T'/Z')$ are pseudo-reductive and they are obviously commutative. 
The standard construction swaps out the Cartan subgroup $\R_{k'/k}(T')$ of $\R_{k'/k}(G')$ for any commutative pseudo-reductive group $C$ that sits in a factorisation $\R_{k'/k}(T')\stackrel{\phi}{\to} C\to\R_{k'/k}(T'/Z')$. We start by making $\R_{k'/k}(G')$ bigger, noting that $C$ acts on $\R_{k'/k}(G')$ by conjugation so that we can form the semidirect product $\R_{k'/k}(G')\rtimes C$; then we tighten things back up again by killing the diagonal subgroup $(x^{-1},\phi(x))$. The group thus formed, namely 
\[\frac{\R_{k'/k}(G')\rtimes C}{\R_{k'/k}(T')},\] is \textit{standard} \cite[Defn.~1.4.4]{CGP15}. 

As advertised, most of the time (e.g. away from characteristics $2$ and $3$) this gives \emph{all} pseudo-reductive groups. 
The way this goes is to show that $G$ is standard if and only if its derived subgroup $\D(G)$ is \cite[Prop.~5.2.1]{CGP15}, and that $\D(G)$ is standard if its base change to $k_s$ is standard \cite[Cor.~5.2.3]{CGP15}, which reduces us to the case that $k=k_s$, $k'/k$ is a purely inseparable field extension and $G$ \emph{pseudo-split}---i.e. it has a split maximal torus $T$.
Now one can show that such a $G$ is standard if each of its normal non-commutative pseudo-simple factors are \cite[Prop.~5.3.1]{CGP15}, and so one wishes to classify the standard such groups.

A major step is to show that any pseudo-split pseudo-reductive group has a Levi subgroup $M$---a subgroup isomorphic over $\bar k$ to the largest reductive quotient of $G$, see \cite[Thm.~3.4.6]{CGP15} or a simpler proof in \cite[Thm.~5.4.4]{CPSurv}. 
As noted in the main body of the paper (Section \ref{sec:i_G}), if $K/k$ is the minimal field of definition of the geometric unipotent radical of $G$, then the quotient map $G_K\to M_K$ induces a map $i_G:G\to \R_{K/k}(M_K)$, where $\R_{K/k}$ is the Weil restriction functor. 
In all cases where the root system of $G_{k_s}$ is reduced (so away from characteristic $2$ or avoiding type $BC_n$ in characteristic $2$), the kernel of $i_G$ is central in $G$ \cite[Prop.~2.3.4]{CPClass}, which helps us to compare the root groups of $G$ with those of $\R_{K/k}(M_K)$.
It follows in particular that the root groups of $G$ identify as $k$-vector subspaces of $K$. 
If $t$ is an element of such a $k$-space, then $x_a(t)$ denotes a corresponding element of the root group. 
Although the root groups can have very high dimension, by conjugating with the Weyl group, one at least sees that all roots of the same length in each irreducible component of the root system must support isomorphic root groups; we can push this quite a bit further using the Steinberg relations in $M_K$ (or, in what amounts to the same thing, $\R_{K/k}(M_K)$). Recalling the elements $n_a(t)=x_a(t)x_{-a}(-t^{-1})x_a(t)$ and $h_a(t)=n_a(t)n_a(-1)$, we can see that a Cartan of $G$ must contain (the Zariski closure of) elements $h_a(t/u)$ for $t$ and $u$ non-zero elements of the root group $U_a$ (that we have identified with a $k$-subspace of $K$). Thence one can use the commutator formula and the relation $x_b(t)^{h_{a}(u)}=x_b(tu^{\langle b,a^\vee\rangle})$ to import the pieces of an $A_1$-subgroup corresponding to a root $a$ into a $b$-root group. Crucially, \emph{in the generic case that there is some $a$---possibly $b$ itself---such that $\langle b,a^\vee\rangle$ is an invertible integer in $k$}, we end up finding that the root groups are all fields. This condition holds unless $p=2$ and the root system is of type $B_n$ or $C_n$. If there are degeneracies in the commutator relations that mean that the short root groups of $M$ generate a subgroup with those as its roots, then the fields over short and long roots can be different and standardness does not hold. 

Conversely, one can conclude standardness from knowing $\Phi(G)=\Phi(M)$ (and hence that the former is reduced) and that the root groups are all the same field, which happens so long as $(\Phi(M),p)\neq (B_n,2), (C_n,2), (F_4,2), (G_2,3)$, ($n\geq 1$). 

\subsection*{A.2 Beyond standard groups}

While \cite{CGP15} goes a long way towards removing barriers to a full classification, \cite{CPClass} is able to push this further and identify essentially the only remaining barrier as: \emph{classify the non-minimal rank one groups in characteristic $2$.} 
Among the non-standard groups:
\begin{itemize}
\item[(a)] If we have a root system of type $F_4$ in characteristic $2$ or $G_2$ in characteristic $3$, we can construct groups where the
the long root field is any finite field extension of the short root field of exponent $1$.
The resulting pseudo-simple groups are called \emph{basic exotic groups} \cite[Defn.~7.2.6]{CGP15}, \cite[Def.~2.2.2]{CPClass}. The calculations  that fundamentally make this work are: that root groups corresponding to short roots $a$ and $b$ such that $a+b$ is long commute; and that for any long root $c$, $x_c(t)^{h_a(u)}=x_c(tu^r)$ where $r$ is a power of $p$.

However, two facts come to the rescue here to exclude anything else: one, that in these root systems, the roots of a single length give an $A_2$ or $D_4$ subsystem, hence correspond to standard subgroups themselves; two, that these groups are both adjoint and simply connected causes them to split off as direct factors. So suitably altering `standard' to an improved `generalised standard' we can consider these cases covered.

\item[(b)] Similar degeneracies in the commutator relations lead also to basic exotic groups of type $B_n$ and $C_n$ in characteristic $2$---including the $n=1$ case---but they cannot be rescued because the short (resp. long) roots give subsystems of type $A_1^n$. 
In type $B_n$ (resp.~$C_n$) in characteristic $2$, the long (resp.~short) root groups identify with a field $K\supseteq k$ over which the short (resp.~long) root groups need only identify with a $K$-vector subspace of an exponent-$1$ field extension $E$ of $K$. This leads to the definition of a class of \emph{generalised basic exotic} groups in types $B$ and $C$.
When $n=2$ there is yet another class of \emph{generalised basic exceptional} groups in type $B_2=C_2$ because of the collision of degeneracies for both root lengths. The concrete construction here rests on the analysis of degenerate quadratic forms in characteristic $2$, see \cite[Chap.~7, Chap.~8]{CPClass}.

\item[(c)] Lastly we may have that $G$ has a non-reduced root system of type $BC_n$ and $p=2$. If $\ker i_G=1$ then these can be classified; the reader can refer to \cite[Thm.~9.8.6]{CGP15} (or \cite{BRSS24} for a more elementary construction).
\end{itemize}

If we look at the rank $1$ characteristic $2$ case  there is as-yet-incomplete information. 
See \cite[Sec.~3.1, Sec.~4.2]{CPClass} for a zoo of examples, and detailed explanations of how they limit the possibilities for a complete classification result.
To get a clean result, the rank-1 case must be controlled with an extra hypothesis.

Let $H$ be pseudo-reductive and $C$ a Cartan subgroup of $G$, then $H$ has \emph{minimal type} if $(\ker i_H)\cap C$ is trivial (this holds in particular if $\ker i_H = 1$) \cite[Defn.~9.4.4]{CGP15}, \cite[Defn.~2.3.2]{CPClass}.
 Now a pseudo-reductive $G$ is \emph{locally of minimal type} if for each root $a$ the rank-$1$ subgroup $(G_{k_s})_{\pm a}$ of $G_{k_s}$ generated by the roots groups $(G_{k_s})_a$ and $(G_{k_s})_{-a}$ has some pseudo-simple central extension which is of minimal type, \cite[Defn~4.3.1]{CPClass}.
 The main result of the Conrad-Prasad classification is that there is a generalisation of the standard construction above which captures all such groups, \cite[Thm.~9.2.1]{CPClass}.

\begin{theorem*}[Structure Theorem]
  Let $G$ be a pseudo-reductive group over a field $k$. 
  Then $G$ is generalised standard if and only if it is locally of minimal type.
\end{theorem*}

We therefore need to describe the generalised standard construction. 
First suppose that $k'/k$ is a non-zero finite reduced $k$-algebra and let $G'$ be a $k'$-group. 
Then $(G',k'/k)$ is said to be a \emph{primitive pair} \cite[Defn.~9.1.5]{CPClass} if each fibre $G'_i$ is one of the following:
\begin{enumerate}
  \item a connected semisimple, absolutely simple, and simply connected $k_i'$-group;
  \item 
  \begin{enumerate}
  \item a basic exotic group of type $G_2$ ($p=3$) or $F_4$ ($p=2$);
  \item a generalised basic exotic group of type $B$ ($p=2$);
  \item a generalised basic exotic group of type $C$ ($p=2$);
  \item a rank-$2$ basic exceptional group of type $B_2$ ($p=2$);
  \end{enumerate}
  \item a minimal-type absolutely pseudo-simple group with a non-reduced root
  system and root field equal to $k_i'$ ($p=2$).\label{nonredcase}
\end{enumerate}
If $C$ denotes a Cartan subgroup of $G$ then the $k$-group functor $$\underline{\Aut}_{G,C}:A\mapsto \{f\in \Aut_A(G_A)\mid f|_{C_A}=\id_{C_A}\}$$ is affine of finite type and has maximal smooth closed $k$-subgroup $Z_{G,C}$ by \cite[2.4.1]{CGP15}. With this notation, we say $G$ is \emph{generalised standard} \cite[Defn.~9.1.7]{CPClass} if there is a $4$-tuple $(G',k'/k,T',\Cs)$ such that $(G',k'/k)$ is a primitive pair, $T'$ a maximal torus of $G'$, $\Cs$  a commutative pseudo-reductive group, and there is a factorisation 
\begin{equation}\mathscr{C}\stackrel{\phi}{\to} \Cs\stackrel{\psi}{\to}Z_{\mathscr{G},\C}=\R_{k'/k}(Z_{G',C'})\label{fact}\end{equation}
with $\mathscr{G}=\mathscr{D}(R_{k'/k}(G'))$, $C'=Z_{G'}(T')$,  and $\C=\mathscr{G}\cap \R_{k'/k}(C')$---a Cartan $k$-subgroup of $\mathscr{G}$---such that there is a $k$-isomorphism 
\begin{equation}(\mathscr{G}\rtimes \Cs)/\C\cong G\label{gisaquotient}\end{equation}
where $\C$ is anti-diagonally embedded as a central $k$-subgroup of $\mathscr{G}\rtimes \Cs$.

\subsection*{A.3 Cartan subgroups in minimal type groups}
 
For use in the main body of the paper, we wish to describe the Cartan subgroups arising from fibres of types (i) and (ii) following the Structure Theorem in the previous section.
This is simply collecting together information from \cite{CGP15, CPClass}.
The Cartans for type (iii) are described by \cite[(9.7.6), (9.8.2)]{CGP15}; see also the preamble to \cite[Thm.~4.3]{BRSS24}.

Let then $k'$ be a finite field extension of $k$. 
The cases for $G'$ in (ii) are pseudo-split pseudo-simple $k'$-groups that depend on some linear algebra data we describe. Let $K$ be the minimal field of definition of $G'$ and let $G''$ be a split simply connected $K$-group with the same root system. Then $G'$ is constructed inside $\R_{K/k'}(G'')$ by specifying the root groups of one as  subspaces of those of the other.
If $V$ is a $k$-subspace of $K$, then we let $(V)^*_{K/k}$ denote the 
Zariski closure in $\R_{K/k}(\Gm)$ of the ratios of non-zero elements of $V$. 
Finally, if $\Delta$ is a (reduced) root system, we let $\Delta_>$ denote the long roots and $\Delta_<$ the short roots.
We have:

\begin{lemma*}[Structure of Cartan]\label{lem:Cartan_structure}
  Suppose that $k=k_s$ and $(G',k'/k)$ is a primitive pair of type (i) or (ii). Let $T'$ be a split maximal torus of $G'$ and $K$ the minimal field of definition of $\RR_u((G')_\bark)$. Then \cref{tabledata} describes a Cartan subgroup of $\D(\R_{k'/k}(G'))$.
\end{lemma*}

\begin{table}\footnotesize \begin{tabular}{l|l|l|l}
Case & Input Data & Additional Facts & Cartan of $\D(\R_{k'/k}(G'))$ \\\hline\hline
(i) & root system & & $\R_{k'/k}(T')$\\\hline

(ii)(a) & \parbox[t]{0.22\textwidth}{root system; $K$} & 
\parbox[t]{0.25\textwidth}{$(G')_a\cong \R_{K/k'}(\Ga)$, $a$ long;\\ $(G')_a\cong \Ga$,  $a$ short} & $\R_{K/k}(T')$\\\hline

(ii)(b) & \parbox[t]{0.22\textwidth}{rank; $K$; \\ $k'$-subspace $V$ of $K$\\ such that $k' (  V )  = K$} & 
\parbox[t]{0.25\textwidth}{$(G')_b=\underline{V}$, $b$ short; \\ $(G')_a\cong \Ga$, $a$ long} & 
\parbox[t]{0.37\textwidth}{$\left(\prod_{a\in\Delta_>} a^\vee (\R_{k'/k}(\Gm))\right) \\\phantom{something}\times (\R_{K/k}(b_K^\vee))(V_{K/k}^*)$\\where $\Delta_<=\{b\}$}\\\hline

(ii)(c) & \parbox[t]{0.22\textwidth}{rank; $K$; \\ $k'$-subspace $V_>$ of $K$,\\defining subfield\\ $K_>=k' (  V_> ) $} & 
\parbox[t]{0.25\textwidth}{$(G')_b=\underline{V_>}$, $b$ long;\\ $(G')_a\cong \R_{K/k'}(\Ga)$, $a$ short} & \parbox[t]{0.37\textwidth}{$\left(\prod_{a\in\Delta_<} a^\vee (\R_{K/k}(\Gm))\right)$ \\ \phantom{something} $\times (\R_{K/k}(b_K^\vee))((V_>)_{K/k}^*)$ \\where $\Delta_>=\{b\}$}\\\hline

(ii)(d) & \parbox[t]{0.22\textwidth}{$K$; \\ $k'$-subspace $V_>$ of $K$,\\defining subfield\\ $K_>=k' (  V_> ) $;\\
$K_>$-subspace $V_<$ of $K$\\ with $K=k' (  V_< ) $} & 
\parbox[t]{0.25\textwidth}{$(G')_b=\underline{V_>}$, $b$ long;\\
$(G')_a=\underline{V_<}$, $a$ short} & 
\parbox[t]{0.32\textwidth}{$(V_>)_{K_>/k}^*\times (V_<)^*_{K/k}$}\\\hline
\end{tabular}\caption{Data describing $G'$ and Cartan subgroups in cases (i) and (ii)\label{tabledata}}\end{table}
\begin{proof}
The table shows, for each of the five cases: the information needed to construct $G'$ in column 2; 
further facts about how this data feeds into the roots groups in column 3;
the resulting Cartan subgroup in column 4.  

In type (i), the group $G'$ is semisimple, hence has Cartan subgroup $T'$; 
in type (ii)(a), the Cartan $k'$-subgroup $C'=Z_{G'}(T')$ of $G'$ is described by \cite[(8.1.1)]{CGP15};
in types (ii)(b) and (c) with rank at least $2$, see \cite[8.2.5]{CPClass};
in type (ii)(d), see \cite[8.3.7]{CPClass}.
The definitions \cite[8.1.1, 8.2.3]{CPClass} of generalized basic exotic groups of types $B$ and $C$ put the rank $1$ groups in the type $B$ case, where we end up considering groups of the form described by \cite[3.1.4]{CPClass}, where the Cartans are also described.
In all cases, the Cartan subgroups can be deduced from those given in \cref{tabledata} by taking $k=k'$. 

In general, $\R_{k'/k}(C')$ is a Cartan subgroup of $\R_{k'/k}(G')$, \cite[A.5.15(3)]{CGP15}. In cases (i) and (ii)(a), the latter group is perfect by \cite[1.3.4, 8.1.2]{CGP15} and the result is immediate; so we may assume we are in one of the remaining three types and $p=2$. 
If $\rk(G')=1$, then as mentioned above we are in the case described by \cite[3.1.4]{CPClass}. 
Then $\D(\R_{k'/k}(G'))$ has Cartan subgroup $V^*_{K/k}$. 
If $\rk(G')=n>2$, then the $A_{n-1}$ subgroup generated by the long (resp.~short) root groups of $G'$ in case (ii)(b) (resp.~(ii)(c)) is isomorphic to $\SL_{n}$ (resp.~$\R_{K/k'}(\SL_{n})$) and so its Weil restriction is perfect. Hence, the long root (resp.~short root) factor of a Cartan subgroup of $\R_{k'/k}(G')$ survives after passing to the derived subgroup. This reduces the assertion to rank $1$, which we have established already---one may appeal to \cite[C.2.32]{CGP15} if so desired. Similar arguments apply when $\rk(G')=2$ and are omitted.
\end{proof}

{\bf{Acknowledgement:}} Many thanks to Brian Conrad for spotting an error in the first version of this paper, as well as helpful clarifications about the main theorem of \cite{CPClass}. The second author is supported by the Leverhulme Trust Research Project Grant number RPG-2021-080. 

{\bf Conflict of interest statement:} On behalf of all authors, the corresponding author states that there is no conflict of interest.

{\bf Data availability statement:} There is no data associated with this manuscript.

{\footnotesize
\bibliographystyle{amsalpha}
\bibliography{bib}}

\end{document}